\newcommand{\N}{\mathbb{N}}
\newcommand{\Z}{\mathbb{Z}}
\newcommand{\R}{\mathbb{R}}
\newcommand{\forc}{\mathcal{V}}
\newcommand{\uniform}[1]{\hat{#1}}
\newcommand{\prob}{\mathrm{Pr}}
\newcommand{\onleft}[3]{L^{#2}_{#3}}
\newcommand{\onright}[3]{R^{#2}_{#3}}
\newcommand{\tuple}[1]{\langle #1 \rangle}
\newcommand{\meas}[1]{\mathcal{M}(#1)}
\newtheorem{theorem}{Theorem}
\newtheorem{proposition}[theorem]{Proposition}
\newtheorem{lemma}[theorem]{Lemma}
\newtheorem{definition}[theorem]{Definition}
\newtheorem{corollary}[theorem]{Corollary}
\newtheorem{observation}[theorem]{Observation}
\newtheorem{example}[theorem]{Example}
\newcommand{\customqed}[1]{{\renewcommand{\qedsymbol}{#1}\qed}}
\newcommand{\varqed}{\customqed{\hbox{$\lrcorner$}}} 
\theoremstyle{definition} 
\newtheorem{Premark}{\color{blue}Peter remark}
\title{Stable Multi-Level Monotonic Eroders}
\author{P\'eter G\'acs\textsuperscript{1} and Ilkka T\"orm\"a\textsuperscript{2}\footnote{Second author supported by a Fulbright Finland scholarship and Academy of Finland grant 295095} \\
        \textsuperscript{1}Department of Computer Science, Boston University \\
        \textsuperscript{2}Department of Mathematics and Statistics, University of Turku, \\ \texttt{iatorm@utu.fi}}
\date{\today}
\begin{document}

\maketitle

\begin{abstract}
Eroders are monotonic cellular automata with a linearly ordered state set that eventually wipe out any finite island of nonzero states.
One-dimensional eroders were studied by Gal'perin in the 1970s, who presented a simple combinatorial characterization of the class.
The multi-dimensional case has been studied by Toom and others, but no such characterization has been found.
We prove a similar characterization for those one-dimensional monotonic cellular automata that are eroders even in the presence of random noise.
\end{abstract}

\section{Introduction}

Cellular automata, CA for short, are a class of dynamical systems that are discrete in both space and time.
A CA consists of a finite or infinite grid of identical finite state machines, usually called cells, that interact with only finitely many neighbors.
At each time step, each machine synchronously enters a new state based on the current states of itself and its neighbors.
The grid is usually assumed to be homogeneous, that is, each machine has a neighborhood of identical shape and uses the same update function.

In this article, we consider \emph{monotonic} cellular automata with \emph{random errors}.
In a monotonic CA, the state set of the finite state machines is totally ordered, and the dynamics of the system respects this order.
They are closely related to \emph{bootstrap percolation}, introduced in \cite{ChLeRe79}, in which the state set is $\{0, 1\}$, a cell cannot change its state from $1$ to $0$, and the system is initiated from a random configuration.
In general, we denote by $0$ the bottom element of the state set.
The dynamical properties of arbitrary monotonic CA in dimension $1$ were studied by Gal'perin \cite{Ga76,Ga77}, who gave a computable characterization of those automata that erase arbitrary finite islands in a sea of $0$-states.
We introduce randomness into the model by allowing each machine to make an error with some fixed small probability, turning it into a probabilistic cellular automaton.
See \cite{MaMa14} and references therein for a survey on this topic.
In fact, since we allow the errors of different states to be dependent, our model is somewhat more general than probabilistic cellular automata.
We prove a version of Gal'perin's result in this extended setting.
More explicitly, we characterize those one-dimensional monotonic cellular automata for which the asymptotic density of $0$-states in the trajectory started from the all-$0$ configuration tends to zero with the error rate.
We call a CA that satisfies this condition a \emph{stable eroder}.
As a corollary, we also show that it is decidable whether a given monotonic automaton is a stable eroder.

The proof of our result is split into two sections, one for each direction of the equivalence.
One direction of the proof, that our condition only holds for stable eroders, uses combinatorial objects that record the causal relationships between nonzero states in a trajectory of the CA, inspired by the work of Toom \cite{To80}.
We show that a nonzero state is always accompanied by such an object, as well as a set of errors whose size is comparable to the size of the object.
A counting argument then bounds the probability of the nonzero state.
For the converse direction, we prove that if a CA that does not satisfy our condition, one can find a finite island of nonzero states that persists forever with an arbitrarily high probability.
We show how randomly occurring errors will ``repair'' the borders and internal structure of the island faster than the CA can erode it away.

\section{Definitions}

Let $\Z^d$ denote the $d$-dimensional integer lattice.
We fix a finite state set $S$, endow it with the discrete topology, and endow $S^{\Z^d}$ with the product topology.
The \emph{shift by $\vec{n} \in \Z^d$} is the function $\sigma^{\vec{n}} : S^\Z \to S^\Z$ defined by $\sigma^{\vec{n}}(x)_{\vec{v}} = x_{\vec{v} + \vec{n}}$ for all $x \in S^{\Z^d}$ and $\vec{v} \in \Z^d$.
If $d = 1$, we denote $\sigma = \sigma^1$.
A \emph{cellular automaton} is a function from $S^{\Z^d}$ to itself that is continuous and commutes with each $\sigma^{\vec{n}}$.
By the Curtis-Lyndon-Hedlund Theorem, it has a finite neighborhood: the value of $f(x)_{\vec 0}$ depends only on finitely many coordinates of $x$.
A \emph{radius} of $f$ is an integer $r \geq 0$ such that $f(x)_{\vec 0}$ depends only on $x|_{[-r, r]^d}$.
A radius gives rise to a \emph{local rule}: a function $F : S^{[-r, r]^d} \to S$ such that $f(x)_{\vec 0} = F(x|_{[-r, r]^d})$.
Indexing an element $\eta \in S^{\Z^d \times \N}$ by $(\vec{n}, t) \in \Z^{d} \times \N$ is denoted $\eta_{\vec{n}}^t$, and $\eta^t \in S^{\Z^d}$ is the $t$'th $d$-dimensional slice of $\eta$.

Denote by $\meas{S^{\Z^d}}$ the set of Borel probability measures on $S^{\Z^d}$, and similarly for $S^{\Z^d \times \N}$.
Consider a mapping $R$ from $\meas{S^{\Z^d}}$ to $\meas{S^{\Z^d \times \N}}$.
For a measure $\mu$, we consider $R(\mu)$ as a random variable with values in $S^{\Z^d \times \N}$.
We say that $R$ is a \emph{stochastic symbolic process}, if
\begin{itemize}
\item it is continuous in the weak topologies,
\item it is linear, that is, $R(\lambda \mu + (1-\lambda) \nu) = \lambda R(\mu) + (1-\lambda)R(\nu)$ holds for $\mu, \nu \in \meas{S^{\Z^d}}$ and $0 \leq \lambda \leq 1$, and
\item $R(\mu)^0 = \mu$.
\end{itemize}
Such a mapping is determined by its images on point measures.
If $\mu$ is a point measure concentrated on $x \in S^{\Z^d}$, we denote $R(\mu) = R(x)$, and call it a \emph{random trajectory} with initial condition $x$.
We say that $R$ is an \emph{$\epsilon$-perturbation} of $f$, if for any $x \in S^{\Z^d}$ and any finite set $C \subset \Z^d \times \N$, the probability that $R(x)_{\vec{v}}^{t+1} \neq f(R(x)^t)_{\vec{v}}$ holds for all $(\vec{v} ,t) \in C$ is at most $\epsilon^{|C|}$.
In this context, the set of coordinates $(\vec{v}, t) \in \Z^d \times \N$ with $R(x)_{\vec{v}}^{t+1} \neq f(R(x)^t)_{\vec{v}}$ is usually called the \emph{error set} of $R(x)$.

For now on, consider only linearly ordered state sets: $S = \{0, 1, \ldots, m\}$ for some $m \geq 1$.
For $x, y \in S^{\Z^d}$, we denote $x \geq y$ if $x_{\vec{v}} \geq y_{\vec{v}}$ holds for all $\vec{v} \in \Z^d$.
A cellular automaton $f : S^{\Z^d} \to S^{\Z^d}$ is \emph{monotonic} if $x \geq y$ implies $f(x) \geq f(y)$ for all $x, y \in S^{\Z^d}$.
For $a \in S$, the all-$a$ configuration is denoted by $\uniform{a}$.
The state $a$ is called \emph{quiescent} for $f$, if $f(\uniform{a}) = \uniform{a}$.
We will always assume that the extremal states $0$ and $m$ are quiescent.
A configuration $x \in S^{\Z^d}$ is an \emph{$a$-island}, if $\uniform{a} \leq x$ and $x_{\vec{v}} = a$ for all but finitely many $\vec{v} \in \Z^d$.
A $0$-island will simply be called an \emph{island}.

\section{Eroders and Zero Sets}

The notion of an eroder has appeared many times in the literature with different names, including nilpotency on finite configurations.
It simply means that a CA removes all islands in a finite (but not necessarily uniform) number of steps.

\begin{definition}
  We say that a monotonic cellular automaton $f$ is an \emph{eroder}, if for every $0$-island $x \in S^{\Z^d}$ there exists $n \in \N$ such that $f^n(x) = \uniform{0}$.
\end{definition}

With the terminology of~\cite{To80}, this means that the all-$0$ trajectory is attractive.
Our aim is to extend this notion to cellular automata with random perturbations.
Of course, a nontrivial $\epsilon$-perturbation of a deterministic CA will almost surely never reach the uniform zero configuration $\uniform{0}$.
Thus we present the following definition, which is equivalent to the all-$0$ trajectory being stable, again using the terminology of~\cite{To80}.

\begin{definition}
  Let $f$ be a monotonic cellular automaton on $S^{\Z^d}$, and denote
  \[
    P(\epsilon) = \inf \{ \prob [R(\uniform{0})_{\vec{0}}^t = 0] \;|\; \mbox{$t \in \N$, $R$ is an $\epsilon$-perturbation of $f$} \}.
  \]
  We say that $f$ is a \emph{stable eroder}, if $P(\epsilon) \longrightarrow 1$ as $\epsilon \longrightarrow 0$.
\end{definition}

The following result justifies our terminology.

\begin{proposition}
  \label{prop:StableIsEroder}
  All stable eroders are eroders.
\end{proposition}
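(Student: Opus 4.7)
The plan is to prove the contrapositive: if $f$ is not an eroder, then $P(\epsilon) \not\to 1$ as $\epsilon \to 0$, so $f$ is not a stable eroder. By hypothesis there is a $0$-island $x$ with finite support $F$ of size $k = |F|$ such that $f^n(x) \neq \uniform{0}$ for every $n \in \N$. For each $n$, fix a site $\vec v_n \in \Z^d$ with $f^n(x)_{\vec v_n} \neq 0$. By shift-equivariance of $f$, the translate $\sigma^{\vec v_n} x$ satisfies $f^n(\sigma^{\vec v_n} x)_{\vec 0} = f^n(x)_{\vec v_n} \neq 0$, so planting a single translated copy of $x$ at time $0$ would already leave the origin nonzero at time $n$; the difficulty is only that a single planting costs $k$ simultaneous errors and therefore has probability no larger than $\epsilon^k$, which vanishes with $\epsilon$.

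Given $\epsilon \in (0,1)$, set $T = \lceil \epsilon^{-k} \rceil$ and $p = \epsilon^k$, and build an $\epsilon$-perturbation $R$ by sampling independent Bernoulli$(p)$ indicators $E_0,\dots,E_{T-1}$: for any initial condition $y$ put $R(y)^0 = y$, and for $0 \leq s < T$ let $R(y)^{s+1}$ be obtained from $f(R(y)^s)$ by taking the coordinatewise maximum with $\sigma^{\vec v_{T-s-1}} x$ when $E_s = 1$, and equal to $f(R(y)^s)$ otherwise; for $s \geq T$ use $f$ deterministically. Continuity, linearity, and the initial-condition axiom are immediate from this explicit coupling. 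For the error bound, observe that an error at $(\vec v, s)$ can occur only if $s < T$, $\vec v$ lies in the (size-$k$) planting region for time slice $s$, and $E_s = 1$. Consequently, for any finite $C \subset \Z^d \times \N$, the event that every point of $C$ is an error is contained in ``$E_s = 1$ for every time-coordinate $s$ appearing in $C$''; if $K$ denotes the number of distinct such time-coordinates, then $|C| \leq kK$ and the joint probability is at most $p^K = \epsilon^{kK} \leq \epsilon^{|C|}$.

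Starting from $\uniform{0}$, whenever $E_s = 1$ the max-update forces $R(\uniform{0})^{s+1} \geq \sigma^{\vec v_{T-s-1}} x$, so by monotonicity and shift-equivariance $R(\uniform{0})^T_{\vec 0} \geq f^{T-s-1}(\sigma^{\vec v_{T-s-1}} x)_{\vec 0} = f^{T-s-1}(x)_{\vec v_{T-s-1}} \neq 0$. By independence of the $E_s$ this gives $\prob[R(\uniform{0})^T_{\vec 0} = 0] \leq (1-p)^T \leq e^{-pT} \leq e^{-1}$, so $P(\epsilon) \leq e^{-1}$ for every $\epsilon \in (0,1)$ and hence $P(\epsilon) \not\to 1$. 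The main obstacle is conceptual rather than technical: one must recognize that a single planting attempt succeeds with probability only $\Theta(\epsilon^k)$ and cannot by itself refute stability, so the proof has to amplify by staggering $\Theta(\epsilon^{-k})$ independent planting attempts in time, each aimed at the origin at the common terminal time $T$; once this staggered construction is in place, verifying the joint error bound reduces to the counting $|C| \leq kK$ above.
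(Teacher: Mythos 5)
Your proof is correct and takes the same high-level route as the paper: prove the contrapositive by planting shifted copies of the persisting island $x$ via increasing noise, one copy per time slice, aimed so that a successful planting leaves the origin nonzero at the terminal time. Where you diverge is in the construction of the $\epsilon$-perturbation and the probability amplification. The paper uses the ``canonical'' perturbation $R_\epsilon$ in which every cell is independently set to the top state $m$ with probability $\epsilon$; for this $R_\epsilon$ the $\epsilon$-perturbation bound is immediate, and the argument lets $t \to \infty$ so that the probability of never planting a copy tends to $0$, yielding $P(\epsilon) = 0$ for every $\epsilon > 0$. You instead build a perturbation tailored to the island: noise is confined to the $k$-cell planting regions and triggered by Bernoulli$(p)$ indicators with $p = \epsilon^k$, and the horizon is fixed at $T \approx \epsilon^{-k}$, giving the uniform bound $P(\epsilon) \leq e^{-1}$. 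The tailored construction buys you a clean, $\epsilon$-independent bound without letting $t$ grow, at the cost of the counting argument $|C| \leq kK$ needed to check the $\epsilon$-perturbation condition (which the paper's i.i.d.\ noise avoids entirely). Both are valid; the paper's version is slightly shorter to justify, while yours makes the role of the island size $k$ and the time horizon $T(\epsilon)$ explicit.
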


\begin{proof}
  Suppose that $f : S^{\Z^d} \to S^{\Z^d}$ is a monotonic cellular automaton that is not an eroder.
  Then there exists an island $x \in S^{\Z^d}$ such that $f^n(x) \neq \uniform{0}$ for all $n \in \N$.
  We choose a sequence of coordinates $(\vec{v}_i)_{i \in \N}$ such that $f^i(x)_{\vec{v}_i} \neq 0$.
  
  Let $\epsilon > 0$ be arbitrary, and let $R_\epsilon$ be the $\epsilon$-perturbation of $f$ where 
  \[
    R_\epsilon(y)^{t+1}_{\vec{v}} =
    \left\{
      \begin{array}{ll}
        m & \mathrm{with~probability~} \epsilon \\
        f(R_\epsilon(y)^t)_{\vec{v}} & \mathrm{with~probability~} 1 - \epsilon
      \end{array}
    \right.
  \]
  independently for every $(\vec{v}, t) \in \Z^d \times \N$ and $y \in S^{\Z^d}$, where $m \in S$ is the maximal state.
  Note that $R_\epsilon(y)^{t+1} \geq f(R_\epsilon(y)^t)$ holds for all $t \in \N$.
  Because of this, $R_\epsilon(y)^t \geq \sigma^{\vec{n}}(x)$ implies $R_\epsilon(y)^{t+i} \geq \sigma^{\vec{n}}(f^i(x))$, and thus $R_\epsilon(y)^{t+i}_{\vec{v}_i - \vec{n}} \neq 0$, for all $i \in \N$ and $\vec{n} \in \Z^d$.

  For every $\vec{n} \in \Z^d$ and $t > 0$, the probability that $R_\epsilon(\uniform{0})^t \geq \sigma^{\vec{n}}(x)$ is bounded from below by a positive constant, say $\delta > 0$.
  Then we have
  \[
    \prob[R_\epsilon(\uniform{0})_0^t \neq 0] \geq \prob[\exists i \leq t : R_\epsilon(\uniform{0})^{t-i} \geq \sigma^{\vec{v}_i}(x)] \geq 1 - (1 - \delta)^t \stackrel{t \to \infty}{\longrightarrow} 1
  \]
  Since $\epsilon$ was arbitrary, $f$ is not a stable eroder.
\end{proof}

It is known that for automata with more than two states, the converse does not hold: there exist automata that are eroders but not stable eroders.
One such example is given in \cite{To76}, and we present a slight modification of it here for completeness.

\begin{example}
  \label{ex:Ternary}
  Let $S = \{0, 1, 2\}$, and let $f : S^\Z \to S^\Z$ be the radius-1 cellular automaton defined by the local rule
  \[
    F(a,b,c) =
    \begin{cases}
      0, & \text{if~} a = 0, b \leq 1, c \leq 1, \\
      1, & \text{if~} b = 2, c \leq 1, \\
      2, & \text{if~} a + b \geq c = 2, \\
      b, & \text{otherwise.}
    \end{cases}
  \]
  A simple (yet tedious) case analysis shows that $f$ is monotonic.
  Consider the $0$-island $x = {}^\infty 0 2^N 0^\infty$ consisting of a run of $2$-states of length $N$.
  The CA $f$ erodes the $2$-states from the right: for $0 \leq t \leq N$ we have $f^t(x) = {}^\infty 0 2^{N-t} 1^t 0^\infty$.
  When only $1$-states remain, they are eroded from the left: for $0 \leq s \leq N$ we have $f^{N + s}(x) = {}^\infty 0 0^s 1^{N-s} 0^\infty$.
  In particular, $f^{2 N}(x)$ is the all-$0$ configuration.
  Since all $0$-islands are majored by an island like $x$, this implies that $f$ is an eroder.

  On the other hand, $f$ is not a stable eroder.
  We only present a high-level idea of the proof.
  Consider a perturbation $R$ of $f$ where on each space-time coordinate $(i, t) \in \Z \times \N$, an error occurs with probability $\epsilon > 0$ independently of all other coordinates, and always produces the state $2$, and consider the random trajectory $R(x)$ with $x = {}^\infty 0 2^N 0^\infty$.
  An error occurring next to the left border of the island will extend the island by one cell.
  This means that the left border performs a random walk, and moves to the left with average speed $\epsilon$, as long as it contains a $2$-state.
  Consider a coordinate $(i, t)$ on the left border of the island (where we have $i < 0$ if the border has moved to the left), and suppose that the border contains a $2$-state at all times up to $t$.
  If we have $R(x)^t_i = 1$, then $R(x)^{t-1}_{i+1} = 1$ as well, by the local rule of $f$.
  This can be extended to $R(x)^{t-s}_{i+s} = 1$ for all $s \leq N - i$.
  The probability of this event is $(1 - \epsilon)^{N-i}$, which drops exponentially as the border $i$ moves to the left.
  One can verify that as $N$ grows, the probability of maintaining the left border in state $2$ forever approaches unity.
\end{example}

\begin{definition}
  \label{def:ForcingSets}
  Let $V \subset \Z^d$ be a finite set, let $k > 0$, and let $a < b \in S$ be quiescent states.
  We say that $V$ is \emph{$a,b$-forcing at level $k$} for a CA $f$, if for every configuration $x \in S^{\Z^d}$ such that $x \leq \uniform{b}$ and $x_{\vec{v}} \leq a$ for all $\vec{v} \in V$, we have $f^k(x)_{\vec{0}} \leq a$.
  The family of all minimal $a,b$-forcing sets at level $k$ for $f$ is denoted by $\forc^k_{a,b}(f)$.
\end{definition}

Note that if $U$ is $a,b$-forcing at level $k$ and $V$ is $a,b$-forcing at level $\ell$, then $U + V$ is $a,b$-forcing at level $k + \ell$, and thus some subset $W \subset U + V$ satisfies $W \in \forc^{k + \ell}_{a,b}(f)$.
Forcing sets are analogous to zero sets as usually defined for binary cellular automata, and in that context, they can be used to characterize eroders.

\begin{definition}
  \label{def:ErodingPair}
  For a set $A \subset \R^d$, denote by $C(A)$ the convex hull of $A$.
  For a CA $f$ and quiescent states $a < b$, denote
  \[
    \tau^k_{a,b}(f) = \bigcap_{V \in \forc^k_{a,b}(f)} C(V) \subset \R^d
  \]
  where each $a,b$-forcing set $V \in \forc^k_{a,b}(f)$ is interpreted as a subset of $\R^d$.
  We say $f$ is \emph{$a,b$-shrinking}, if $\tau^k_{a,b}(f) = \emptyset$ for some $k > 0$.
\end{definition}

It is not immediately clear whether one can algorithimically decide if a cellular automaton is eroding for a given pair of states, since Definition~\ref{def:ErodingPair} refers to an unbounded variable $k$, but there may exist a bound $K$, computable from the radius of $f$ and the number of states, such that $f$ is $a,b$-shrinking precisely when $\tau^k_{a,b}(f) = \emptyset$ for some $k \leq K$.
We show by an example that it is not sufficient to consider the case $k = 1$.
However, the condition turns out to be decidable for one-dimensional cellular automata.
This follows from the results of \cite{Ga76,Ga77}, and we repeat it explicitly in Lemma~\ref{lem:ForcingRates}.
The decidability of the condition in the multi-dimensional case is left open.

\begin{example}
We show that the case $k = 1$ is not enough to determine the eroding condition for two given states.
Namely, consider the one-dimensional cellular automaton $f$ with radius $1$ and state set $S = \{0, 1, \ldots, m\}$ defined by the local rule
\[
  F(a,b,c) =
  \begin{cases}
    b-1, & \text{if~} b > 0 \text{~and~} c = 0, \\
    b, & \text{otherwise.}
  \end{cases}
\]
This automaton decrements a nonzero state by one if its right neighbor is $0$, and otherwise keeps the state fixed.
Consider the families $\forc^k_{0,m}(f)$ of $0,m$-forcing sets.
We have $\{0\} \in \forc^k_{0,m}(f)$ for all $k \geq 1$, since the state $0$ always stays as $0$.
For $1 \leq k < m$, the configuration $x = {}^\infty 0 . m 0^\infty$ satisfies $f^k(x)_0 \neq 0$, which implies $\forc^k_{0,m}(f) = \{ \{0\} \}$.
However, $f^m(y)_0 = 0$ for any $y \in S^\Z$ with $y_1 = 0$, which implies $\{1\} \in \forc^m_{0,m}(f)$.
It follows that the CA $f$ is $0,m$-shrinking, but this fact cannot be deduced from $\forc^k_{0,m}(f)$ for any $k < m$.
\end{example}

For monotonic binary cellular automata, the $0,1$-shrinking condition is always decidable, as it suffices to consider only $k = 1$.
Furthermore, it is equivalent to being an eroder and a stable eroder.

\begin{proposition}[Section IV in \cite{To80}]
  \label{prop:BinaryEroder}
  Let $S = \{0, 1\}$, and let $f$ be a monotonic CA on $S^{\Z^d}$.
  The following conditions are equivalent.
  \begin{itemize}
  \item $f$ is an eroder.
  \item $f$ is a stable eroder.
  \item $f$ is $0,1$-shrinking.
  \item $\tau^1_{a,b}(f) = \emptyset$.
  \end{itemize}
\end{proposition}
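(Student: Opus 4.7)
The implication \emph{stable eroder $\Rightarrow$ eroder} is already Proposition~\ref{prop:StableIsEroder}, and $\tau^1_{0,1}(f) = \emptyset$ trivially implies that $f$ is $0,1$-shrinking. My plan is to close the equivalence by establishing, in order, (a) eroder $\Leftrightarrow \tau^1_{0,1}(f) = \emptyset$, (b) $0,1$-shrinking $\Rightarrow$ eroder, and (c) eroder $\Rightarrow$ stable eroder. Step (b) follows almost immediately from (a) via the identity $\forc^1_{0,1}(f^k) = \forc^k_{0,1}(f)$: $f$ is $0,1$-shrinking iff $\tau^1_{0,1}(f^k) = \emptyset$ for some $k$, iff $f^k$ is an eroder by (a), iff $f$ is an eroder.

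For (a), I would use an \emph{eroder function}. Assuming $\tau^1_{0,1}(f) = \emptyset$, Helly's theorem in $\R^d$ yields $d+1$ minimal zero sets in $\forc^1_{0,1}(f)$ whose convex hulls have empty common intersection, and from these one constructs a nonnegative piecewise-linear function $\phi : \R^d \to \R$ with finite sublevel sets such that for every nonzero $0$-island $x$, the maximum of $\phi$ over $\{\vec v : x_{\vec v} = 1\}$ strictly decreases under one application of $f$. Iterating, every island vanishes in finitely many steps. Conversely, if $\vec p \in \tau^1_{0,1}(f)$, I would build an invariant configuration by placing $1$'s in a large cone aligned so that for every minimal zero set $V$ some translate of $V$ meets the cone; any island majoring this cone is then never eroded.

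The deep step is (c), essentially Toom's stability theorem. Given an $\epsilon$-perturbation $R$ of $f$ and the event $R(\uniform{0})^t_{\vec 0} = 1$, I would trace it backward through the space-time diagram: at every non-error cell in state $1$, each minimal zero set of its predecessor neighborhood must contain a $1$, so one may select such a predecessor and iterate, producing a bounded-degree \emph{Toom graph} whose leaves are actual errors. The eroder function from (a) forces the spatial extent of this graph to be at most a constant times the number of its leaves, so any graph rooted at $(\vec 0, t)$ contains $\Omega(t')$ errors, where $t'$ is the distance from $(\vec 0, t)$ to the nearest error. A Peierls-type argument bounds the number of graphs of size $n$ by $K^n$ for a fixed constant $K$, while each has probability at most $\epsilon^n$; summing over shapes yields $\prob[R(\uniform{0})^t_{\vec 0} = 1] \to 0$ as $\epsilon \to 0$, uniformly in $t$.

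The main obstacle is step (c): setting up Toom's contour machinery so that each edge of the contour is charged to an independent error event requires a careful canonical assignment of a contour to each $1$-valued space-time cell. Steps (a) and (b) are combinatorial and become routine once Helly's theorem is in hand. Step (c) is the engine that the remainder of the paper will have to extend to the multi-level setting, where forcing sets at several pairs $a < b$ must be combined consistently.
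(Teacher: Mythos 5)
The paper does not prove this proposition: it is stated as a classical theorem and cited verbatim to Section~IV of Toom~\cite{To80}, with no argument supplied, and the remainder of the paper takes it as given. Your outline reproduces Toom's own strategy for the binary case---Helly's theorem and an eroder function for the equivalence between erosion and $\tau^1_{0,1}(f) = \emptyset$, the identification $\forc^1_{0,1}(f^k) = \forc^k_{0,1}(f)$ to dispose of the $0,1$-shrinking clause, and the contour/Peierls estimate for stability---so it follows the same route as the cited source. One caution on the backward direction of your step~(a): a point $\vec p \in \tau^1_{0,1}(f)$ shows that every half-space $\{v : \langle v, \vec q\rangle \ge 0\}$ with $\langle \vec p, \vec q\rangle \ge 0$ is preserved by $f$, but these half-spaces are infinite, and intersecting a family of them gives a cone for which the preservation argument no longer goes through as stated (the complement of the cone is larger than the complement of any single half-space). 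Producing a single \emph{finite} immortal island from $\tau^1_{0,1}(f) \neq \emptyset$ is genuinely more delicate than ``align a cone with the zero sets,'' and Toom handles it carefully; as written, that clause of your sketch has a gap, though the overall architecture is the right one.
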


It is known that this characterization of eroders generalizes to larger alphabets in the one-dimensional case, but not in the multi-dimensional case.
This was proved in \cite{Ga76}, and we repeat it in Theorem~\ref{thm:Galperin}.
In this article, we will show that the characterization of stable eroders likewise generalizes to larger alphabets in the one-dimensional case (although the generalization is different).
The multi-dimensional case is left open.

\begin{definition}
  \label{def:StabCond}
  Let $S = \{0, 1, \ldots, m\}$, and let $f$ be a monotonic CA on $S^{\Z^d}$.
  We say that $f$ satisfies the \emph{stability condition} if there exist quiescent states $0 = a_1 < a_2 < \cdots < a_k = m$ such that $f$ is $a_i, a_{i+1}$-shrinking for all $1 \leq i < k$.
\end{definition}

  




\begin{theorem}
  \label{thm:StableEroderGeneral}
  A one-dimensional monotonic automaton $f$ is a stable eroder if and only if it satisfies the stability condition.
\end{theorem}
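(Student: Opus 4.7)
The plan is to prove the two directions separately. For the forward direction---stability condition implies stable eroder---I would fix quiescent states $0 = a_1 < a_2 < \cdots < a_k = m$ witnessing the stability condition, and pick $K$ large enough that $\tau^K_{a_i,a_{i+1}}(f) = \emptyset$ for all $i$. Since we are in one dimension, every $C(V)$ is a closed interval, so by Helly's theorem on $\R$ the empty intersection can be realized by just two forcing sets $V^L_i, V^R_i \in \forc^K_{a_i,a_{i+1}}(f)$ with disjoint convex hulls. Under an $\epsilon$-perturbation $R$, given a cell $(0,t)$ with $R(\uniform{0})^t_0 > 0$, I would build a genealogy recursively: at the smallest level $i$ for which the current cell's value exceeds $a_i$, either an error has occurred on the causal path during the last $K$ steps, or the local bound $\leq \uniform{a_{i+1}}$ is violated (producing an ancestor at the higher level $i+1$), or the $a_i,a_{i+1}$-shrinking property applied to both $V^L_i$ and $V^R_i$ yields two ancestors at time $t-K$ in spatially separated positions, each with value $> a_i$. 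Iterating back to time $0$, where the configuration is $\uniform{0}$, forces every leaf of this branching genealogy to be an error.

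The central technical step, and the main obstacle of the whole proof, is to organize the genealogy so that the size of the combinatorial object is linear (not exponential) in the number of error leaves. A naive binary branching tree has $2^d$ leaves at depth $d$ and yields only a useless bound. Following Toom's inspiration, I would need to prune or amortize causal paths---for instance by committing to a single ``leftmost'' or ``rightmost'' frontier within each level, or by reusing lower-level subtrees as ``black-box'' error-bounded components inside the higher-level steps---so that the object's total size stays within a constant factor of the number of errors it contains. Once this is achieved, the number of objects of size $n$ is bounded by $C^n$ for some constant $C = C(f,K)$, and the probability that a specific object is present is at most $\epsilon^n$ by the $\epsilon$-perturbation condition. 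A union bound and a geometric series then give $\prob[R(\uniform{0})_0^t \neq 0] \to 0$ as $\epsilon \to 0$, uniformly in $t$.

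For the converse direction I would argue by contrapositive. Assume the stability condition fails; take a maximal chain of quiescent states $0 = a_1 < \cdots < a_k = m$ and pick consecutive $a = a_i$, $b = a_{i+1}$, with no quiescent state strictly between them, for which $f$ is not $a,b$-shrinking. Then $\tau^K_{a,b}(f) \neq \emptyset$ for every $K$, and in one dimension this nested non-emptiness of intervals yields enough structure to find, for each large $N$, a configuration supported in $\{a, b\}$ with a $b$-run of length $N$ inside an $a$-sea whose borders erode only slowly under $f$. Imitating Example~\ref{ex:Ternary}, I would exhibit an $\epsilon$-perturbation whose errors push cells up the quiescent chain and verify by a random-walk argument that each border of the $b$-run has an inward drift from errors that outpaces the outward drift of $f$'s erosion; this shows that such a $b$-island in an $a$-sea persists forever with probability tending to $1$ as $N \to \infty$.

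To reduce to the required initial condition $\uniform{0}$, I would iterate the construction down the quiescent chain: a persistent $b$-island in an $a$-sea is itself nested as a core inside a persistent $a$-island in an $a_{i-1}$-sea, and so on down to a sea of $0$'s. Combining this with the fact that the initial seed of errors generating such a nested structure appears somewhere in a sufficiently large spacetime window with probability approaching $1$, and appealing to translation invariance, yields $\prob[R(\uniform{0})_0^t \neq 0]$ bounded away from $0$ uniformly in $t$ for some fixed small $\epsilon > 0$, contradicting the stable-eroder property.
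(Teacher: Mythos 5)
Your proposal has the right skeleton in both directions, but the crucial technical step in each is explicitly or implicitly deferred. For the forward direction, the Helly reduction to two forcing sets per level is exactly how the paper begins; you then correctly diagnose that a naive two-child genealogy (one child from $U_n$, one from $V_n$, with errors and promotions to higher levels as terminal events) grows exponentially in depth, and flag the need to ``prune or amortize'' as the main obstacle --- but you leave it unresolved, and this is precisely where the paper's work lives. The branching tree is replaced by a simply connected space-time \emph{polygon} with a cyclic boundary vertex list; a linear-functional balance argument (Lemma~\ref{lem:BorderSize}) shows that a constant fraction of boundary vertices must have type 1 or 2, and a weight-redistribution argument across the nested levels (Lemma~\ref{lem:LinearBound}) propagates this to show that a constant fraction of \emph{all} vertices correspond to errors. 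Without that construction the number of candidate objects with a given error-count is not bounded by a geometric quantity, and the final union bound has no content.

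For the converse, the observation that failure of the stability condition gives a consecutive quiescent pair $(a,b)$ that is not $a,b$-shrinking is correct, and the random-walk intuition for the $b,a$-interface is sound. But the nesting step --- that a persistent $b$-island in an $a$-sea sits inside a persistent $a$-island in a lower sea --- is not justified, and this carries essentially all of the difficulty: if $f$ happens to be $a_{i-1},a$-shrinking, the outer $a$-shell erodes inward deterministically, and there is no argument that the inner $b$-core regenerates the shell fast enough to save it. The paper's machinery for this is Proposition~\ref{prop:Inductive}: a downward induction on quiescent states which maintains a two-region structure (an outer ``$b$-good'' region controlled by a biased random walk via Lemma~\ref{lem:RandWalkInterface}, and an inner ``$\omega$-good'' region repaired by small $\omega$-cones produced by the inductive hypothesis for $b$, via Observation~\ref{obs:AlphaBeta} and Lemmas~\ref{lem:Rectangle} and~\ref{lem:AllGood}). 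The paper also pivots on a different choice: instead of a single consecutive pair, it selects (Lemma~\ref{lem:AltStability}) the \emph{minimal} quiescent $\omega$ with $L_{a,\omega} \leq R_{\omega,a}$ for \emph{every} quiescent $a < \omega$, which is what makes the descent from $\omega$ all the way to $0$ close.
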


The next three sections are devoted to the proof of this result.

\section{Eroders in One Dimension: Gal'perin Rates}

In this section, we fix a one-dimensional monotonic automaton $f : S^\Z \to S^\Z$ with radius $r \geq 0$ and consider the evolution of certain configurations under $f$.

\begin{definition}
  An \emph{increasing ladder} is a configuration $x \in S^\Z$ such that $x_i \geq x_j$ for all $i \geq j$.
  Decreasing ladders are defined analogously.
  Let $a \neq b \in S$ be quiescent states.
  The \emph{step of type $a,b$} is the ladder $x \in S^\Z$ defined by
  \[
    x_i = \left\{
      \begin{array}{ll}
        a, & \mathrm{if~} i < 0, \\
        b, & \mathrm{if~} i \geq 0.
      \end{array} \right.
  \]
  For $t \in \N$, we denote
  \begin{align*}
    \onleft{x}{t}{a,b} & {} = \max \{ i \in \Z \;|\; f^t(x)_i = a \} \\
    \onright{x}{t}{a,b} & {} = \min \{ i \in \Z \;|\; f^t(x)_i = b \}
  \end{align*}
  In the limit, we denote $L_{a,b} = \lim_{t \to \infty} \onleft{x}{t}{a,b} / t$, and similarly for $R_{a,b}$.
\end{definition}

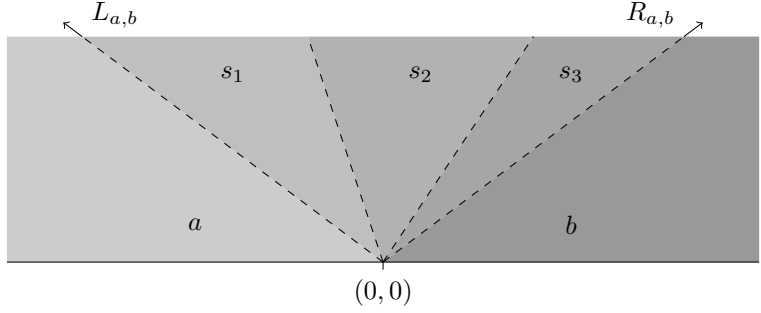
\begin{figure}[ht]
\begin{center}
\begin{tikzpicture}

  \fill [black!20] (-5,0) -- (0,0) -- (-4,3) -- (-5,3);
  \fill [black!25] (0,0) -- (-1,3) -- (-4,3);
  \fill [black!30] (0,0) -- (2,3) -- (-1,3);
  \fill [black!35] (0,0) -- (4,3) -- (2,3);
  \fill [black!40] (5,0) -- (0,0) -- (4,3) -- (5,3);

  \draw (-5,0) -- (5,0);
  \draw [dashed] (0,0) -- (-4,3);
  \draw [dashed] (0,0) -- (-1,3);
  \draw [dashed] (0,0) -- (2,3);
  \draw [dashed] (0,0) -- (4,3);
  \draw [->] (-4,3) -- ++(-1/4,3/16);
  \draw [->] (4,3) -- ++(1/4,3/16);
  
  \draw (0,-0.1) -- (0,0);
  
  \node at (-2.5,0.5) {$a$};
  \node at (2.5,0.5) {$b$};
  \node at (-2,2.5) {$s_1$};
  \node at (0.5,2.5) {$s_2$};
  \node at (2.5,2.5) {$s_3$};
  
  \node [above right] at (-4,3) {$L_{a,b}$};
  \node [above left] at (4,3) {$R_{a,b}$};
  
  \node [below] at (0,-0.1) {$(0,0)$};

\end{tikzpicture}
\caption{The Gal'perin rates of a monotonic cellular automaton. Time increases upward. The figure depicts regions of states $a < s_1 < s_2 < s_3 < b$ in space-time.}
\label{fig:Galperin}
\end{center}
\end{figure}

See Figure~\ref{fig:Galperin} for a visualization of the definition.
We know from the work of Gal'perin \cite{Ga76} that these quantities, which we call the \emph{Gal'perin rates} of $f$, always exist and are rational numbers.
Furthermore, they can be effectively computed from the local rule of $f$ \cite{Ga77,dSaLeTo14}.
The following result is also convenient.

\begin{lemma}[\cite{Ga76}]
  \label{lem:Galperin}
  There exists a constant $K > 0$ such that
  \[ \left| \onleft{}{t}{a,b} - t \cdot L_{a,b} \right| \leq K \mbox{~and~} \left| \onright{}{t}{a,b} - t \cdot R_{a,b} \right| \leq K \]
  hold for all quiescent $a \neq b \in S$ and $t \in \N$.
\end{lemma}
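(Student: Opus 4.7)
My plan is to show that each boundary position $\onleft{}{t}{a,b}$ and $\onright{}{t}{a,b}$ stays within bounded distance of an arithmetic progression, by proving that the local behaviour of $f^t(x)$ at each boundary is eventually periodic in $t$ up to spatial translation. I do not expect $L_{a,b} = R_{a,b}$ in general, since the transition region between $a$ and $b$ may widen over time, but each boundary should stabilise into its own periodic regime governed by a finite amount of local data.

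I begin with two standard observations. By monotonicity and shift-equivariance of $f$, the inequality $\sigma(x) \geq x$ (which holds because $x$ is a weakly increasing ladder) propagates to $\sigma(f^t(x)) \geq f^t(x)$ for all $t$, so each $f^t(x)$ is a weakly increasing ladder with values in $[a,b]$ and quiescent tails on both sides; together with the radius-$r$ bound this gives $-rt - 1 \leq \onleft{}{t}{a,b} < \onright{}{t}{a,b} \leq rt$. Next, comparing $f^{s+t}(x)$ to $f^t$ applied to the shifted steps $\sigma^{-\onleft{}{s}{a,b} - 1}(x) \geq f^s(x) \geq \sigma^{-\onright{}{s}{a,b}}(x)$ and using monotonicity yields the superadditive bound $\onleft{}{s+t}{a,b} \geq \onleft{}{s}{a,b} + \onleft{}{t}{a,b} + 1$ and the subadditive bound $\onright{}{s+t}{a,b} \leq \onright{}{s}{a,b} + \onright{}{t}{a,b}$. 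Fekete's lemma then produces the limits $L_{a,b}, R_{a,b} \in [-r, r]$, with $\onleft{}{t}{a,b} + 1 \leq t L_{a,b}$ and $\onright{}{t}{a,b} \geq t R_{a,b}$ for all $t$.

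The heart of the argument is then to exhibit, for the left boundary, a finite-length window of the ladder around $\onleft{}{t}{a,b}$ that determines both $\onleft{}{t+1}{a,b} - \onleft{}{t}{a,b}$ and the corresponding window at time $t+1$. Granted this, the pair (windowed profile, boundary increment) evolves inside a finite state space, so it is eventually periodic with some period $s$ and displacement $k$, forcing $\onleft{}{t+s}{a,b} = \onleft{}{t}{a,b} - k$ for all $t \geq t_0$. This yields $L_{a,b} = -k/s$ (in particular rational) and $|\onleft{}{t}{a,b} - t L_{a,b}| \leq K$ for $K$ absorbing the transient behaviour, and the mirrored argument handles $\onright{}{t}{a,b}$.

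The main obstacle lies in justifying that such a finite window exists, that is, in decoupling the motion of the left boundary from arbitrarily distant intermediate plateaus in the ladder. This is possible in one dimension essentially because of monotonicity and the linear order on $S$. I would follow Gal'perin's argument in \cite{Ga76}, which processes the intermediate states between $a$ and $b$ by induction on their number, treating each consecutive sub-transition as an independent step problem with its own Gal'perin rate and showing that any plateau that fails to survive the dynamics is absorbed in a bounded number of steps, so that only finitely many essentially distinct local profiles at the left boundary ever appear.
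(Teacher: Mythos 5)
The paper does not prove this lemma; it is cited directly from Gal'perin~\cite{Ga76}, so there is no in-paper proof to compare against. Your partial derivation is correct: the ladder property is preserved by monotonicity together with shift-commutation, the superadditivity $\onleft{}{s+t}{a,b} \geq \onleft{}{s}{a,b} + \onleft{}{t}{a,b} + 1$ and subadditivity $\onright{}{s+t}{a,b} \leq \onright{}{s}{a,b} + \onright{}{t}{a,b}$ follow from comparison with the shifted steps $\sigma^{-\onleft{}{s}{a,b}-1}(x)$ and $\sigma^{-\onright{}{s}{a,b}}(x)$, and Fekete's lemma then yields the one-sided bounds $\onleft{}{t}{a,b} + 1 \leq t L_{a,b}$ and $\onright{}{t}{a,b} \geq t R_{a,b}$ for all $t$. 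You also rightly flag that this is only half of the statement: the hard direction is ruling out that a boundary falls arbitrarily far behind its asymptotic rate, and that requires showing the local profile near each boundary becomes eventually periodic up to translation despite the potential influence of distant intermediate plateaus. For that step you describe the shape of Gal'perin's inductive argument on the number of intermediate quiescent states but explicitly defer to~\cite{Ga76} rather than carrying it out. Since the paper itself does exactly this deferral, your proposal is at the same level of rigor, plus a correct worked-out easy half; the only thing to add is an explicit remark that the $a > b$ case follows from the $a < b$ case by inverting the order on $S$.
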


We now show how these rates are connected to the forcing sets of $f$.

\begin{lemma}
  \label{lem:TrivialLR}
  Let $a \neq b \in S$ be quiescent.
  Then $L_{a,b} \leq R_{a,b}$.
  If there is no quiescent state $c$ between $a$ and $b$, then $L_{a,b} = R_{a,b}$.
\end{lemma}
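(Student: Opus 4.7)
For both claims, the plan is to exploit monotonicity of $f$ to show that every $f^t(x)$ is itself a ladder, which spatially separates cells of value $a$ from cells of value $b$.

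Assume $a < b$; the case $a > b$ follows by symmetry with decreasing ladders. The step $x$ satisfies $\sigma(x) \geq x$, so monotonicity combined with the fact that $f$ commutes with $\sigma$ gives $\sigma(f^t(x)) \geq f^t(x)$, i.e., each $f^t(x)$ is again an increasing ladder. Quiescence of $a$ and $b$, together with $\uniform{a} \leq x \leq \uniform{b}$ and monotonicity, forces $\uniform{a} \leq f^t(x) \leq \uniform{b}$, so all values lie in $[a,b]$. In such a ladder, the cells of value $a$ form a prefix ending at $\onleft{}{t}{a,b}$ and the cells of value $b$ form a suffix starting at $\onright{}{t}{a,b}$; both intervals are nonempty because the radius $r$ together with quiescence pins down the values far from the origin. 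Since these intervals cannot overlap, $\onleft{}{t}{a,b} + 1 \leq \onright{}{t}{a,b}$, and dividing by $t$ and letting $t \to \infty$ gives $L_{a,b} \leq R_{a,b}$.

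For the second claim, suppose for contradiction that $L_{a,b} < R_{a,b}$. By Lemma~\ref{lem:Galperin} the ``gap'' $[\onleft{}{t}{a,b}+1, \onright{}{t}{a,b}-1]$ has length at least $t(R_{a,b} - L_{a,b}) - 2K$, hence grows linearly in $t$. Inside the gap, $f^t(x)$ is a nondecreasing sequence of values strictly between $a$ and $b$, so pigeonhole produces a monochromatic block $[p, p + \ell - 1]$ of some state $c \in (a,b)$ with $\ell \geq c_1 t$ for a constant $c_1 > 0$. Now analyze the map $g : S \to S$ defined by $g(d) = F(d, \ldots, d)$: it is monotonic, maps $[a,b]$ into itself, and its fixed points are exactly the quiescent states. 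Starting from $c$, the iterates $g^k(c)$ form a monotone sequence and therefore stabilize at a fixed point $d \in [a,b]$ within at most $b - a$ steps. Under the hypothesis that no quiescent state lies strictly between $a$ and $b$, we have $d \in \{a, b\}$.

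The monochromatic block propagates under $f$: induction on $k$ gives $f^{t+k}(x)_i = g^k(c)$ for every $i \in [p + rk, p + \ell - 1 - rk]$, and for the bounded value $k$ reaching $d$ and large $t$, this sub-block still has length $\Omega(t)$. If $d = a$, its right endpoint is an $a$-cell at position at least $\onleft{}{t}{a,b} + \ell - rk \geq t L_{a,b} + \Omega(t)$, forcing $\onleft{}{t+k}{a,b} \geq t L_{a,b} + \Omega(t)$, which contradicts the upper bound $\onleft{}{t+k}{a,b} \leq (t+k) L_{a,b} + K$ from Lemma~\ref{lem:Galperin} since $k$ is bounded. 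If $d = b$, the symmetric argument using the \emph{left} endpoint of the sub-block and $\onright{}{t+k}{a,b}$ yields a contradiction. The main subtlety is tracking the precise location of the monochromatic block and choosing the appropriate endpoint in each case, since pigeonhole only guarantees the block sits somewhere in the gap, not at a specific side.
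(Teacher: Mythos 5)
Your proof is correct and follows essentially the same strategy as the paper: use monotonicity and shift-equivariance to keep each $f^t(x)$ a ladder with values in $[a,b]$, locate a monochromatic block inside the gap by pigeonhole, iterate the diagonal map $g(c) = F(c,\ldots,c)$ to collapse the block to $a$ or $b$, and derive a contradiction with Lemma~\ref{lem:Galperin}. The bookkeeping differs slightly—the paper places a constant-width block ($2rn+1$ cells) in the middle third of the gap so distance from both endpoints is automatic, while you take a linear-width block placed anywhere and select the relevant endpoint depending on which limit state the iteration reaches—but both variants are sound.
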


\begin{proof}
  The first claim is clear, since $L^t_{a,b} \leq R^t_{a,b}$ holds for all $t \in \N$.
  For the second claim, suppose that $a < b$ and no state $a < c < b$ is quiescent.
  Since $f$ is monotonic, there exists $n \geq 1$ such that $f^n(\uniform{c}) \in \{\uniform{a}, \uniform{b}\}$ for all such $c$.
  If $C = R^t_{a,b} - L^t_{a,b}$ is large enough, there exists $a < c < b$ and $L^t_{a,b} + C/3 < i < R^t_{a,b} - C/3$ such that $f^t(x)|_{[i - r n, i + r n]} = c^{2 r n + 1}$, where $r \geq 0$ is the radius of $f$.
  Then $f^{t+n}(x)_i \in \{a, b\}$, and we have either $L^{t+n}_{a,b} > L^t_{a,b} + C/3$ or $R^{t+n}_{a,b} < R^t_{a,b} - C/3$.
  Lemma~\ref{lem:Galperin} implies $|L^{t+n}_{a,b} - L^t_{a,b}| \leq n L_{a,b} + K$, and similarly for $R^t_{a,b}$, so $C$ is bounded by a constant.
  Then we have $L_{a,b} = R_{a,b}$.
\end{proof}

\begin{lemma}
  \label{lem:ForcingRates}
  Let $a < b \in S$ be quiescent.
  Then
  \begin{align*}
    L_{a,b} & {} = \sup \{ -\max U / k \;|\; k > 0, U \in \forc^k_{a,b}(f) \} \\
    R_{b,a} & {} = \inf \{ -\min V / k \;|\; k > 0, V \in \forc^k_{a,b}(f) \}
  \end{align*}
\end{lemma}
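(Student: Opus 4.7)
The plan is to prove the two equalities by parallel arguments, describing the case of $L_{a,b}$ in detail and treating $R_{b,a}$ via the symmetric construction with the step of type $b,a$ in place of $a,b$.

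For $L_{a,b} \geq \sup\{-\max U/k\}$, I would fix $U \in \forc^k_{a,b}(f)$ with $u = \max U$. Letting $x$ denote the step of type $a,b$, set $y = \sigma^{-u-1}(x)$, so that $y_i = a$ for $i \leq u$ and $y_i = b$ for $i \geq u+1$; in particular $y \leq \uniform{b}$ and $y_i \leq a$ for every $i \in U$. The forcing property then yields $f^k(y)_0 \leq a$, while monotonicity applied to $y \geq \uniform{a}$ gives $f^k(y)_0 \geq a$, so $f^k(x)_{-u-1} = f^k(y)_0 = a$ and hence $\onleft{}{k}{a,b} \geq -u-1$. To remove the additive constant I would invoke the observation after Definition~\ref{def:ForcingSets}: the $n$-fold Minkowski sum $nU$ is $a,b$-forcing at level $nk$ with $\max(nU) = nu$. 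The same calculation at level $nk$ gives $\onleft{}{nk}{a,b}/(nk) \geq -u/k - 1/(nk)$, and letting $n \to \infty$ yields $L_{a,b} \geq -u/k$.

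For the reverse inequality, I would fix $t \geq 1$ and let $L = \onleft{}{t}{a,b}$ with $x$ again the step of type $a,b$. The finite radius $r$ of $f$ forces $-rt \leq L \leq rt-1$, so $V = [-rt, -L-1] \cap \Z$ is a nonempty finite interval, and the claim is that $V$ is $a,b$-forcing at level $t$. To prove this, consider the shifted step $z = \sigma^L(x)$, which satisfies $z_i = a$ for $i \leq -L-1$, $z_i = b$ for $i \geq -L$, and $f^t(z)_0 = a$ by the definition of $L$. Given any $y \leq \uniform{b}$ with $y_i \leq a$ on $V$, I would replace $y$ outside the dependence window $[-rt, rt]$ by the values of $z$ to obtain $y^*$; a coordinate-by-coordinate check shows $y^* \leq z$ and $f^t(y^*)_0 = f^t(y)_0$, so monotonicity yields $f^t(y)_0 \leq f^t(z)_0 = a$. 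Passing to any minimal forcing subset $U \subseteq V$ gives $\max U \leq -L-1$, whence $\sup\{-\max U/k\} \geq (L+1)/t \geq \onleft{}{t}{a,b}/t$, and letting $t \to \infty$ completes this direction.

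The statement for $R_{b,a}$ is proved identically using the step of type $b,a$: the region where a shifted configuration lies $\leq a$ becomes a right half-line, so $\min V$ replaces $\max U$ throughout, the witnessing interval becomes $[\onright{}{t}{b,a}, rt]$, and the direction of the extremum flips, producing the infimum. The main obstacle I anticipate is the proof that an interval like $[-rt, -L-1]$ really is $a,b$-forcing, because Definition~\ref{def:ForcingSets} quantifies over arbitrary $y \leq \uniform{b}$ that may dip well below $a$ outside $V$. The truncation trick that overwrites $y$ outside the dependence window with the values of the shifted step $z$ is the technical heart of the argument; once it is in place, everything else reduces to routine bookkeeping with Minkowski sums and the existence of the Gal'perin rates afforded by Lemma~\ref{lem:Galperin}.
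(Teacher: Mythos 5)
Your proof is correct and follows essentially the same strategy as the paper's. The only cosmetic difference is in the lower bound $L_{a,b} \geq -\max U/k$: the paper establishes the recursion $\onleft{}{t+k}{a,b} \geq \onleft{}{t}{a,b} - u$ by applying the forcing set to the increasing ladder $f^t(x)$ and iterates, whereas you apply the forcing property once to a shifted step and then amplify via the Minkowski sum $nU \in$ (a superset of) $\forc^{nk}_{a,b}(f)$; both routes use the same remark following Definition~\ref{def:ForcingSets} and yield the same $-u/k - O(1/n)$ bound. Your explicit truncation of $y$ to $y^*$ outside the window $[-rt,rt]$ in the upper bound is exactly the step the paper leaves implicit when it writes $f^t(y)_0 \leq f^t(x)_{\onleft{}{t}{a,b}}$, so you have correctly identified and filled the small gap there. (One harmless slip: the lower bound on $L = \onleft{}{t}{a,b}$ coming from the radius is $L \geq -rt-1$, not $-rt$, but this plays no role since nonemptiness of $V$ only needs $L \leq rt-1$.)
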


\begin{proof}
  Let $x \in S^\Z$ be the step of type $a,b$, and let $U \in \forc^k_{a,b}(f)$ be arbitrary.
  Denote $u = \max U$.
  For $t \in \N$, we have $f^t(x)_{\onleft{}{t}{a,b} + i - u} = a$ for all $i \in U$.
  Since $U$ is $a,b$-forcing, this implies $f^{t+k}(x)_{\onleft{}{t}{a,b} - u} = a$.
  Because $f$ is monotonic, $f^{t+k}(x)$ is an increasing ladder, and we have $\onleft{}{t+k}{a,b} \geq \onleft{}{t}{a,b} - u$.
  An inductive argument now shows $\onleft{}{t}{a,b} \geq - t u / k$, 
  which gives $L_{a,b} \geq - u / k$.
  
  For the other direction, let $U_t = \{- r t, - r t + 1, \ldots, - \onleft{}{t}{a,b} - 1 \} \subset \Z$ for $t > 0$, where $r \in \N$ is the radius of $f$.
  If $y \in S^\Z$ is a configuration satisfying $y \leq \uniform{b}$ and $y_i \leq a$ for all $i \in U_t$, then $y_i \leq \sigma^{\onleft{}{t}{a,b}}(x)_i$ for all $- r t \leq i \leq r t$.
  This implies $f^t(y)_0 \leq f^t(x)_{\onleft{}{t}{a,b}} = a$, so $U_t$ is an $a,b$-forcing set at level $t$.
  We also have $- \max U_t / t = (\onleft{}{t}{a,b} + 1) / t \longrightarrow L_{a,b}$ as $t$ grows.
  
  The second statement follows by symmetry.
\end{proof}

\begin{lemma}
  \label{lem:ForcingRates2}
  The following conditions are equivalent.
  \begin{enumerate}
  \item $f$ is $a,b$-shrinking.
  \item For some $k > 0$, there exist $U,V \in \forc^k_{a,b}(f)$ such that $\max U < \min V$.
  \item $L_{a,b} > R_{b,a}$.
  \end{enumerate}
  Furthermore, they are algorithmically decidable from the local rule of $f$.
\end{lemma}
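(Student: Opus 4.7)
My plan is to prove the equivalences in the order (1)$\Leftrightarrow$(2) and (2)$\Leftrightarrow$(3), then handle decidability separately.

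First I would settle (1)$\Leftrightarrow$(2). In one dimension, $C(V) = [\min V, \max V]$ for any finite $V \subset \Z$, and $\forc^k_{a,b}(f)$ is a finite family since whether $V$ is $a,b$-forcing at level $k$ depends only on $V \cap [-kr, kr]$, where $r$ is the radius. Thus $\tau^k_{a,b}(f)$ is a finite intersection of closed intervals in $\R$, and by the one-dimensional Helly property it is empty precisely when some two members $U, V \in \forc^k_{a,b}(f)$ satisfy $\max U < \min V$ (after possibly swapping). Quantifying over $k$ gives (1)$\Leftrightarrow$(2).

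Next I would prove (2)$\Leftrightarrow$(3) using Lemma~\ref{lem:ForcingRates}. The direction (2)$\Rightarrow$(3) is immediate: witnesses $U, V$ at a common level $k$ with $\max U < \min V$ yield $-\max U / k > -\min V / k$, so the supremum defining $L_{a,b}$ strictly exceeds the infimum defining $R_{b,a}$. For (3)$\Rightarrow$(2), I would pick $U \in \forc^{k_1}_{a,b}(f)$ and $V \in \forc^{k_2}_{a,b}(f)$ with $-\max U / k_1 > -\min V / k_2$, equivalently $k_2 \max U < k_1 \min V$. The Minkowski-sum remark following Definition~\ref{def:ForcingSets} then gives that $U + U + \cdots + U$ ($k_2$ summands) is $a,b$-forcing at level $k_1 k_2$ with maximum $k_2 \max U$, and the $k_1$-fold sum of $V$ is forcing at the same level with minimum $k_1 \min V$. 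Passing to minimal forcing subsets only pushes the maximum down and the minimum up, so the inequality $\max U' < \min V'$ survives at the common level $k = k_1 k_2$, yielding (2).

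Decidability then follows cheaply: by Lemma~\ref{lem:Galperin} and the references cited there, the Gal'perin rates $L_{a,b}$ and $R_{b,a}$ are rational and effectively computable from the local rule of $f$, so condition (3) can be checked by computing both rates and comparing them. The only step that requires care is the level-matching in (3)$\Rightarrow$(2): one must verify that replacing the Minkowski sum by a minimal forcing subset does not spoil the extremal inequality, but this is automatic since restricting a set shrinks its convex hull. I expect this bookkeeping to be the only mildly delicate point; the rest is a straightforward assembly of Lemma~\ref{lem:ForcingRates}, the Minkowski closure of forcing sets, and Helly in $\R^1$.
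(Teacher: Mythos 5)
Your proof is correct and follows essentially the same route as the paper: Helly in $\R^1$ (which the paper uses implicitly when it remarks that each $C(V)$ is an interval) for (1)$\Leftrightarrow$(2), Lemma~\ref{lem:ForcingRates} for (2)$\Rightarrow$(3), Minkowski sums of forcing sets brought to a common level for (3)$\Rightarrow$(2), and computability of the Gal'perin rates for decidability. The one cosmetic remark is that effective computability of $L_{a,b}$ and $R_{b,a}$ is a separate fact cited from \cite{Ga77,dSaLeTo14} rather than part of Lemma~\ref{lem:Galperin}, but that does not affect the argument.
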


\begin{proof}
  If $f$ is $a,b$-shrinking, then there exists $k \in \N$ with $\tau^k_{a,b} = \emptyset$.
  Since the convex hull $C(V)$ is an interval for each $V \in \forc^k_{a,b}$, this means that there exist forcing sets $U, V \in \forc^k_{a,b}$ with $\max U < \min V$.
  Conversely, (2) implies $U \cap V = \emptyset$, so $f$ is $a,b$-shrinking.
  Thus (1) and (2) are equivalent.
  
  If the (2) holds, then $L_{a,b} \geq -\max U / k > \min V / k \leq R_{b,a}$ by Lemma~\ref{lem:ForcingRates}, so we obtain (3).
  Suppose finally $L_{a,b} > R_{b,a}$.
  Then there exist $k, \ell \in \N$, and two sets $U \in \forc^k_{a,b}(f)$ and $V \in \forc^\ell_{a,b}(f)$ with $\max U / k < \min V / \ell$.
  Denote $U' = U + U + \cdots + U$ (a sum of $\ell$ sets), and $V' = V + V + \cdots + V$ (a sum of $k$ sets).
  By the remark after Definition~\ref{def:ForcingSets}, there exist subsets $\hat U \subset U'$ and $\hat V \subset V'$ that are in $\forc^{k \ell}_{a,b}$ and $\forc^{k \ell}_{b,a}$ respectively.
  This implies $\max \hat U \leq \ell \max U < k \min V \leq \min \hat V$.
  Thus the second condition holds.
  
  The decidability follows from the results of \cite{Ga77}.
\end{proof}

The characterization of eroders in one dimension is the following.

\begin{theorem}[\cite{Ga76}]
  \label{thm:Galperin}
  The cellular automaton $f$ is an eroder if and only if $R_{0,a} > L_{a,0}$ holds for all quiescent states $a \in S \setminus \{0\}$.
\end{theorem}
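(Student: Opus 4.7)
The plan is to relate island trajectories to the evolution of the two flanking steps of type $0,a$ and $a,0$, using the Gal'perin rates from Lemma~\ref{lem:Galperin} to quantify how the $a$-region expands or contracts.

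For the forward direction, I argue the contrapositive. Suppose $R_{0,a} \leq L_{a,0}$ for some quiescent $a > 0$, and consider the $a$-island $x_N$ with value $a$ on $[0,N-1]$ and $0$ elsewhere; note $x_N = x_L \wedge x_R^{(N)}$, where $x_L$ is the step of type $0,a$ and $x_R^{(N)}$ is the step of type $a,0$ shifted to $N$. By finite speed of propagation and quiescence of $a$, we obtain the equalities $f^t(x_N)_i = f^t(x_L)_i$ for $i \leq N-1-rt$ and $f^t(x_N)_i = f^t(x_R^{(N)})_i$ for $i \geq rt$. Applied at $i = R^t_{0,a}$ and at $i = L^t_{a,0}+N$, these equalities certify $a$-states of $f^t(x_N)$ whenever these positions lie within their valid windows. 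Lemma~\ref{lem:Galperin} combined with $R_{0,a} \leq L_{a,0}$ yields $L^t_{a,0}+N - R^t_{0,a} \geq N - 2K$ throughout the initial valid range $t \leq N/(2r)$; hence for $N > 2K$ the $a$-region is a nonempty interval. To extend persistence beyond this range, monotonicity lets me replace $f^{N/(2r)}(x_N)$ by a new $a$-island of width $\geq N - 2K$ (which it dominates) and restart the analysis. Provided $N$ is chosen large enough relative to the accumulated loss from iteration, an $a$-state persists for every $t$, so $f^t(x_N) \neq \uniform{0}$ for all $t$ and $f$ is not an eroder.

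For the converse direction, assume $R_{0,a_j} > L_{a_j,0}$ holds for all quiescent $a_j > 0$, where $0 = a_1 < a_2 < \cdots < a_k = m$ are the quiescent states. I prove by downward induction on $j$ that every island $x$ eventually satisfies $f^{T_j}(x) \leq \uniform{a_{j-1}}$; the base $j = k$ follows from $x \leq \uniform{m}$. For the inductive step, starting from $f^{T_j}(x) \leq \uniform{a_j}$ with support width $\leq N$, the comparison with evolved steps of type $0,a_j$ and $a_j,0$ gives $f^{T_j+t}(x)_i < a_j$ for $i < R^t_{0,a_j}$ or $i > L^t_{a_j,0}+N$. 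Under the strict inequality $R_{0,a_j} > L_{a_j,0}$ and Lemma~\ref{lem:Galperin}, the interval $[R^t_{0,a_j}, L^t_{a_j,0}+N]$ becomes empty after time $t = O(N/(R_{0,a_j}-L_{a_j,0}))$, so no position takes value $a_j$. To drop further to $\leq \uniform{a_{j-1}}$ rather than merely $\leq \uniform{a_j-1}$, I observe that monotonicity of the local rule $F$ forces intermediate non-quiescent states $b \in (a_{j-1}, a_j)$ to die on the boundary alongside the $a_j$-states: since $F(\cdots,b,\cdots) \leq F(\cdots,a_j,\cdots)$ for $b < a_j$, any local configuration that drives a boundary $a_j$-state to zero does the same for $b$. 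After bounded additional time, $f^{T_{j-1}}(x) \leq \uniform{a_{j-1}}$. Reaching $j = 1$ yields $f^{T_1}(x) = \uniform{0}$, so $f$ is an eroder.

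The main technical obstacle is making the iteration in the forward direction truly rigorous, since a naive restart loses $O(K)$ width per round and only $O(N/K)$ rounds fit before the lower bound is exhausted. The resolution exploits that at each restart the current configuration is not merely an arbitrary $a$-island of smaller width but still locally matches the two flanking evolved steps; thus the Gal'perin error term from Lemma~\ref{lem:Galperin} applies once globally rather than reaccumulating across iterations, and $a$-state persistence extends for all $t \geq 0$.
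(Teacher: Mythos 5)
The paper cites this theorem from Gal'perin~\cite{Ga76} without giving its own proof, so there is no internal argument to compare against; your sketch is the only argument on the table. Its overall shape---sandwich the $a$-island between the two evolved flanking steps and use Gal'perin rates, then reduce the maximal state one quiescent level at a time---is the right classical strategy, but both directions have genuine gaps.

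In the forward direction you correctly diagnose that the naive restart discards $O(K)$ width per round and therefore only gives persistence for time $O(N^2/(rK))$, not forever, but the proposed fix is not a proof. ``The current configuration still locally matches the two flanking evolved steps, so the Gal'perin error applies once globally'' is the right intuition, yet the restart step you actually wrote (replace $f^{t_0}(x_N)$ by a dominated fresh $a$-island) throws away precisely the structure that would prevent the error from reaccumulating. What is needed is the inductive identity $f^t(x_N) = f^t(x_L) \wedge f^t(x_R^{(N)})$ for \emph{all} $t$: one shows that if $p$ is an increasing ladder and $q$ a decreasing ladder, both bounded above by the all-$a$ configuration, and if they share a common run of $a$-states of length at least $2r$, then $f(p\wedge q)=f(p)\wedge f(q)$; since $f^t(x_L)$ and $f^t(x_R^{(N)})$ share the run $[R^t_{0,a},\,L^t_{a,0}+N]$, whose length is at least $N-2K$ by Lemma~\ref{lem:Galperin} and $R_{0,a}\le L_{a,0}$ for \emph{every} $t$, a single choice $N>2K+2r$ works forever. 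This is the precise sense in which $K$ ``applies once globally,'' but the restart framing neither states nor proves it.

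In the converse direction the elimination of the quiescent state $a_j$ is sound, but the passage from $f^t(x)\le\uniform{a_j-1}$ to $f^{t'}(x)\le\uniform{a_{j-1}}$ is not. The inequality $F(\cdots,b,\cdots)\le F(\cdots,a_j,\cdots)$ only shows that $b$ dies in any neighborhood where $a_j$ would have; it says nothing about neighborhoods in which $a_j$ would have survived. Nothing in your argument excludes $F(a_j-1,\ldots,a_j-1)=a_j$, in which case the uniform $(a_j-1)$-configuration is driven \emph{upward} and the claim that ``intermediate states die on the boundary after bounded additional time'' has no support. To close this one must use the hypothesis $R_{0,a_j}>L_{a_j,0}$ to control the transitional region of the evolved ladders---this is where the bounded-interface argument behind Lemma~\ref{lem:TrivialLR} and the wavefront-matching of Lemma~\ref{lem:BorderGuard} do real work---and your proposal does not engage with that machinery.
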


This condition is not equivalent to $f$ being $0,a$-shrinking for all quiescent $a \in S \setminus \{0\}$.
More explicitly, if $f$ is $0,m$-shrinking where $m \in S$ is the maximal state, then it is an eroder, but the converse does not hold in general.
In the one-dimensional case, see Example~\ref{ex:TernaryAgain}.
Also, the condition implies that the time required for $f$ to erode an island is  at most linear 
in the diameter of the island, and in two dimensions examples of slower eroders are known, 
even in the case that the automaton is \emph{decreasing}, that is, $f(x) \leq x$ holds 
for all $x \in S^{\Z^2}$ \cite{dMeTo06}.

We will now reformulate our main result, Theorem~\ref{thm:StableEroderGeneral}, in terms of Gal'perin rates.

\begin{theorem}
  \label{thm:StableEroder}
  Let $S = \{0, 1, \ldots, m\}$.
  A one-dimensional monotonic automaton $f$ on $S^\Z$ is a stable eroder if and only if there exist quiescent states $0 = a_1 < a_2 < \cdots < a_k = m$ such that $L_{a_i, a_{i+1}} > R_{a_{i+1}, a_i}$ for all $1 \leq i < k$.
\end{theorem}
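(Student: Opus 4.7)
The plan is to obtain Theorem~\ref{thm:StableEroder} as an almost immediate reformulation of Theorem~\ref{thm:StableEroderGeneral}, using the machinery developed in this section. The two statements differ only in how they express the characterizing condition on $f$: Theorem~\ref{thm:StableEroderGeneral} invokes the shrinking condition of Definition~\ref{def:ErodingPair} packaged as the stability condition of Definition~\ref{def:StabCond}, while Theorem~\ref{thm:StableEroder} rephrases the same condition in terms of Gal'perin rates.

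The key step is to apply the equivalence of conditions (1) and (3) in Lemma~\ref{lem:ForcingRates2}: for quiescent states $a < b$, the automaton $f$ is $a,b$-shrinking if and only if $L_{a,b} > R_{b,a}$. Applying this to each consecutive pair of a chain $0 = a_1 < a_2 < \cdots < a_k = m$ of quiescent states, I obtain that the chain witnesses the stability condition of Definition~\ref{def:StabCond} precisely when $L_{a_i, a_{i+1}} > R_{a_{i+1}, a_i}$ holds for all $1 \leq i < k$. Since both theorems existentially quantify over such chains, the two conditions on $f$ are logically equivalent. Combining this observation with Theorem~\ref{thm:StableEroderGeneral} yields the result.

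There is no substantive obstacle in this reduction; the real difficulty is concentrated in Theorem~\ref{thm:StableEroderGeneral}, whose two directions are proved in the subsequent sections. As the introduction indicates, the expected sticking point is the sufficient direction: if $f$ satisfies the stability condition, one must argue that every nonzero state in a random trajectory from $\uniform{0}$ can be attributed to a large causal structure supported on a correspondingly large set of errors, yielding an exponentially small bound via a counting argument in the spirit of Toom. The necessary direction should be handled by constructing, from a failure of the stability condition, a finite $a_{i+1}$-island whose borders are repaired by sparse random errors faster than $f$ can erode them, an idea already illustrated by Example~\ref{ex:Ternary}.
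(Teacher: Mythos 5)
Your reduction is correct and mirrors the paper exactly: the paper itself calls Theorem~\ref{thm:StableEroder} a ``reformulation'' of Theorem~\ref{thm:StableEroderGeneral} in terms of Gal'perin rates, with Lemma~\ref{lem:ForcingRates2}(1)$\Leftrightarrow$(3) providing the pairwise translation between $a_i,a_{i+1}$-shrinking and $L_{a_i,a_{i+1}} > R_{a_{i+1},a_i}$, so the two existential statements over chains of quiescent states are equivalent. The substantive content, as you note, lives in the two directions proved in the subsequent sections, and your high-level sketch of those arguments matches the paper's Toom-style causal-structure counting and the border-repair construction.
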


\begin{example}
  \label{ex:TernaryAgain}
  Consider again the three-state CA $f$ of Example~\ref{ex:Ternary}.
  We compute the Gal'perin rates of $f$ for all pairs of states, since every state of $f$ is quiescent.
  Consider first $R_{0,1}$.
  In the step configuration ${}^\infty 0 . 1^\infty$, the leftmost $1$ turns into a $0$ in one application of $f$.
  Thus $R_{0,1} = 1$.
  In a similar manner, we compute
  \[
    \begin{array}{cccccc}
      L_{0,1} = 1 & L_{1,0} = 0 & L_{0,2} = 0 & L_{2,0} = -1 & L_{1,2} = -1 & L_{2,1} = -1 \\
      R_{0,1} = 1 & R_{1,0} = 0 & R_{0,2} = 0 & R_{2,0} = 0  & R_{1,2} = -1 & R_{2,1} = -1
    \end{array}
  \]
  From this table, one can check that the condition of Theorem~\ref{thm:Galperin} holds, so $f$ is an eroder.
  Note also that $f$ is not $0,2$-shrinking.
  The condition of Theorem~\ref{thm:StableEroder} does not hold, so $f$ is not a stable eroder.
\end{example}

We list here some generally useful lemmas.

\begin{lemma}
  \label{lem:MonotoneLR}
  Let $a < b \in S$ be quiescent.
  Then we have
  \begin{equation}
    \label{eq:LIneq}
    L_{a,b} \geq L_{a,b+1} \mbox{~and~} R_{b,a} \leq R_{b+1,a}
  \end{equation}
  together with
  \begin{equation}
    \label{eq:RIneq}
    R_{a,b} \geq R_{a+1,b} \mbox{~and~} L_{b,a} \leq L_{b,a+1}.
  \end{equation}
\end{lemma}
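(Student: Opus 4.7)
The plan is a uniform monotonicity argument. For each of the four inequalities I would choose two step configurations that differ in exactly one of the two extremal states, so that one majorizes the other; then apply $f^t$, invoke monotonicity of $f$, and translate the resulting pointwise inequality of iterates into an inclusion of level sets, hence into the desired inequality of the defining positions $\onleft{}{t}{a,b}$ or $\onright{}{t}{a,b}$.

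Before carrying out the comparison I would record the preparatory fact (already used implicitly in the proof of Lemma~\ref{lem:TrivialLR}) that $f$ preserves the class of increasing ladders, and symmetrically of decreasing ones. If $x$ is increasing, then $\sigma(x) \geq x$, and combining monotonicity of $f$ with $\sigma \circ f = f \circ \sigma$ gives $\sigma(f(x)) \geq f(x)$, so $f(x)$ is increasing. Combined with the quiescence of $a$ and $b$, this implies that for $x$ the step of type $a,b$ with $a<b$, the iterate $f^t(x)$ is again an increasing ladder that equals $a$ far to the left and $b$ far to the right; in particular $f^t(x)_i \in [a,b]$ at every position $i$ and every time $t$. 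The analogous statement holds for decreasing steps of type $b,a$.

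I would then carry out the comparison for one inequality in detail. For $L_{a,b}\geq L_{a,b+1}$, let $x$ be the step of type $a,b$ and $y$ the step of type $a,b+1$, so that $y\geq x$ and hence $f^t(y)\geq f^t(x)$ for every $t$. If $f^t(y)_i = a$, then $f^t(x)_i \leq a$; combined with $f^t(x)_i \in [a,b]$ this forces $f^t(x)_i = a$ and therefore $i \leq \onleft{x}{t}{a,b}$. Taking the maximum over the level set $\{i : f^t(y)_i = a\}$ yields $\onleft{y}{t}{a,b+1} \leq \onleft{x}{t}{a,b}$, and dividing by $t$ and passing to the limit gives the claim. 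The three remaining inequalities follow by the same template: for $R_{b,a}\leq R_{b+1,a}$ one compares decreasing steps of types $b,a$ and $b+1,a$ and tracks the level set of the common lower state $a$; for $R_{a,b}\geq R_{a+1,b}$ and $L_{b,a}\leq L_{b,a+1}$ one instead varies the lower extremal state and tracks the level set of the common upper state $b$, using that $f^t$ of the relevant steps takes values in $[a+1,b]$.

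I anticipate no substantial obstacle; the whole lemma is a short monotonicity exercise once ladder preservation is in hand. The only point requiring care is the bookkeeping: whether the level-set inclusion runs from the smaller step to the larger or vice versa depends on whether the tracked state is the upper or lower extreme of the step and whether we are taking a maximum (for $L^t$) or a minimum (for $R^t$) over it, so one has to verify in each case that the direction of the induced inequality matches the one stated.
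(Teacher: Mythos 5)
Your proof is correct and uses essentially the same approach as the paper: compare the two step configurations, invoke monotonicity of $f$ to obtain an inclusion of the relevant level sets, deduce the inequality for $L^t$ or $R^t$, and take the limit. The paper proves only the first inequality of \eqref{eq:LIneq} and dispatches the remaining three by appealing to the left/right and state-order symmetries, whereas you spell out the comparisons individually, but the underlying argument is identical.
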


\begin{proof}
  We only prove the first inequality of \eqref{eq:LIneq}; the others follow by symmetry, either by swapping left and right, or inverting the order of the state set.
  Let $x, y \in S^\Z$ be the steps of type $a,b$ and $a,b+1$, respectively.
  Then we have $\uniform{a} \leq f^t(x) \leq f^t(y)$ for all $t \in \N$ by monotonicity, and in particular, $f^t(y)_i = a$ implies $f^t(x)_i = a$ for all $i \in \Z$.
  This implies $L^t_{a,b} \geq L^t_{a,b+1}$, and the claim follows by taking the limit.
\end{proof}

\begin{lemma}
  \label{lem:BorderGuard}
  Let $a < b \in S$ be quiescent states.
  Then there exists a quiescent state $c \in S$ with $a < c \leq b$ such that $L_{c,a} = R_{b,a}$.
\end{lemma}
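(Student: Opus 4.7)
The plan is to set $c$ to be the smallest quiescent state in $(a,b]$ for which $R_{c,a} = R_{b,a}$; such a $c$ exists since $c = b$ trivially qualifies. The inequality $L_{c,a} \leq R_{c,a} = R_{b,a}$ is immediate: Lemma~\ref{lem:TrivialLR} gives $L_{c,a} \leq R_{c,a}$, and monotonicity of $f$, together with the fact that the step of type $c,a$ is pointwise dominated by the step of type $b,a$, gives $R_{c,a} \leq R_{b,a}$. It therefore suffices to establish the matching bound $L_{c,a} \geq R_{c,a}$.

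I would argue by contradiction, assuming $L_{c,a} < R_{c,a}$. By the second part of Lemma~\ref{lem:TrivialLR} there is at least one quiescent state in $(a,c)$, and by the minimality of $c$ every such quiescent $q$ satisfies $R_{q,a} < R_{c,a}$. Let $x$ be the step of type $c,a$. By Lemma~\ref{lem:Galperin} the transient width $R^t_{c,a} - L^t_{c,a}$ grows linearly with $t$, so for all sufficiently large $t$ the state at position $R^t_{c,a} - 1$ lies in $(a,c)$: having this state equal $c$ at infinitely many times would force $R^t_{c,a} - L^t_{c,a} \leq 1$ at those times, contradicting Lemma~\ref{lem:Galperin}. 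By a pigeonhole on the finite state set, and by iterating $f$ a bounded number of times on any long uniform run of a non-quiescent state (which, by monotonicity, must stabilize to a uniform quiescent configuration after finitely many steps), I extract a quiescent state $q^* \in (a,c)$ and a subsequence $t_k \to \infty$ along which the $q^*$-zone in $f^{t_k}(x)$ directly adjacent to the $a$-zone has width tending to infinity. Such a $q^*$ exists because the linearly growing transient is partitioned into a bounded number of contiguous zones, so at least one zone must have unbounded width; and if the widest such zone is separated from the $a$-zone only by bounded intermediate layers, walking leftward and stabilizing any non-quiescent layers reduces the situation to a wide quiescent zone effectively adjacent to the $a$-zone.

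For such $q^*$ and $t_k$, the key step is to sandwich $f^{t_k}(x)$ between two shifted configurations. From below, the shifted step of type $q^*,a$ with $q^*$ at positions $j < R^{t_k}_{c,a}$ and $a$ at $j \geq R^{t_k}_{c,a}$ is pointwise dominated by $f^{t_k}(x)$, by the decreasing-ladder property. From above, the configuration taking the value $c$ left of the $q^*$-zone, the value $q^*$ throughout the zone, and $a$ beyond dominates $f^{t_k}(x)$. Applying $f^s$ for $s$ small relative to the $q^*$-zone width, so that the cells within distance $rs$ of the $a$-boundary (where $r$ is the radius of $f$) lie entirely inside the $q^*$-zone, the two sandwich configurations produce the same leftmost $a$-position near the $a$-boundary, namely $R^{t_k}_{c,a} + R^s_{q^*,a}$. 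Taking $s \to \infty$ along a suitable diagonal gives $R_{c,a} = R_{q^*,a}$, contradicting $R_{q^*,a} < R_{c,a}$. The main obstacle I anticipate is the rigorous identification of $q^*$: the transient may contain several nested zones of differing quiescent and non-quiescent states, and a careful case analysis or induction through thin layers is needed to pin down the quiescent wide zone that genuinely governs the $a$-boundary dynamics within the relevant time scale.
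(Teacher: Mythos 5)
Your reduction is sound: choose $c$ to be the smallest quiescent state in $(a,b]$ with $R_{c,a}=R_{b,a}$, so it remains to rule out $L_{c,a}<R_{c,a}$, and by Lemma~\ref{lem:TrivialLR} together with minimality of $c$ this would force every quiescent $q\in(a,c)$ to have strictly smaller rate $R_{q,a}<R_{c,a}$, which you want to contradict by producing a quiescent $q^*\in(a,c)$ with $R_{q^*,a}=R_{c,a}$. The gap is in the sandwich, and it is not a cleanup issue: both of its inequalities require the wide $q^*$-zone to lie \emph{directly adjacent} to the $a$-zone of $f^{t_k}(x)$, and nothing in your argument establishes this. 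Concretely, suppose near the right boundary $f^{t_k}(x)$ reads $\ldots\,q^*\,q^*\,q'\,a\,a\,\ldots$ with a thin intermediate layer of some state $a<q'<q^*$, which the decreasing-ladder structure certainly permits. Then your ``from below'' configuration carries $q^*$ at the position occupied by $q'$, hence is \emph{not} $\leq f^{t_k}(x)$ there; and your ``from above'' configuration carries $a$ at that same position, hence is \emph{not} $\geq f^{t_k}(x)$ there. Both bounds collapse, and the finite-speed-of-propagation comparison meant to give $R_{q^*,a}=R_{c,a}$ depends on both. Your escape hatch --- ``walking leftward and stabilizing any non-quiescent layers'' --- is exactly the missing content: a thin layer of a non-quiescent state wedged between the $q^*$-zone and the $a$-zone need not resolve, in a uniformly bounded number of steps, into a quiescent zone of unbounded width abutting $a$; and during any such transient the $a$-boundary is precisely the object whose rate you are controlling, so you cannot iterate $f$ on that layer ``for free.''

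For comparison, the paper does not argue this lemma internally at all; it cites Lemma~5($\delta$) of Gal'perin~\cite{Ga76}, where the relevant structure theory of one-dimensional monotone ladders is developed. If you want a self-contained route, one way to bypass the adjacency problem entirely is to avoid singling out any interior zone: for large $t$, overwrite the whole transient interior of $f^t(x)$ by a single uniform state dominating every intermediate state, obtaining a configuration $z\geq f^t(x)$ that near the $a$-boundary is a simple two-state step, and then track the $a$-boundary of $z$ over a time window of length proportional to $t$ using finite propagation speed. The price of that route is having to deal with a dominating state that may not itself be quiescent, which is a genuine subtlety, but it does not require exhibiting a wide quiescent zone next to $a$.
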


Of course, there also exists $a < d \leq b$ such that $R_{a,d} = L_{a,b}$ by symmetry.


\begin{proof}
  This follows directly from Lemma 5($\delta$) of~\cite{Ga76}.
\end{proof}

\section{Stability Condition is Sufficient}

In this section, we prove the first part of Theorem~\ref{thm:StableEroder}: automata that satisfy the stability condition are stable eroders.
The proof follows the ideas presented in \cite{To80}.
The high level idea is that we take an $\epsilon$-perturbation of $f$, and consider a single coordinate $(0, T)$ in a random trajectory starting fom the uniform zero configuration.
Assuming the coordinate has a nonzero state, we construct a geometric object that records the `reason' for this event, that is, traces it back to some finite subset of coordinates where errors have occurred.
We prove that the size of this subset grows linearly with the size of the object, and there is an exponential number of objects of a given size.
A simple calculation then shows that the probability of $(0, T)$ having a nonzero state approaches 0 with $\epsilon$.
The main difference to \cite{To80} is that our geometric objects are polygons instead of trees.

For the remainder of this section, fix a monotonic cellular automaton $f$ on $S^\Z$ that satisfies the stability condition for the quiescent states $0 = a_1 < \cdots < a_k = m$.
For convenience, we denote $L_n = L_{a_n, a_{n+1}}$, $R_n = R_{a_{n+1}, a_n}$, $S_n = \{a \in S \;|\; a_n < a \leq a_{n+1}\}$ and $S_n^+ = \cup_{\ell \geq n} S_\ell$ for $n \in \{1, \ldots, k - 1\}$.
By Lemma~\ref{lem:ForcingRates}, there exist $k_n > 0$ and forcing sets $U_n, V_n \in \forc^{k_n}_{a_n, a_{n+1}}(f)$ such that $U_n, V_n \subset \{-k_n r, \ldots, k_n r\}$ and $-k_n L_n \leq u_n = \max U_n < \min V_n = v_n \leq -k_n R_n$.
Recall the number $K$ from Lemma~\ref{lem:Galperin}.
We may assume that
\begin{equation}
\label{eq:BoundOnK}
2 K < v_n - u_n, \qquad k_n = 1
\end{equation}
hold for all $n$; this can be guaranteed by taking a large enough power of $f$ that is divisible by each $k_n$.
Let $R$ be an $\epsilon$-perturbation of $f$ for some small $\epsilon > 0$, and consider the random trajectory $\eta = R(\uniform{0})$.
We will prove that $\prob[\eta_0^T > 0] \stackrel{\epsilon \to 0}{\longrightarrow} 0$ uniformly in $T$.

We define some more auxiliary concepts before proceeding with the proof.
Define four sets of integer vectors: 
\begin{align*}
  \Delta^L_n & {} = \{ (i, -1) \;|\; -r \leq i \leq u_n\}, &
  \Delta^R_n & {} = \{ (i, 1) \;|\; -r \leq i \leq -v_n \}, \\
  \Delta^C_n & {} = \{ (i, 0) \;|\; 0 < i \leq 2 r \}, &
  \Delta^B_n & {} = - \Delta^C_n
\end{align*}
Finally, define the random \emph{error set} by $E = \{ (i, t) \in \Z \times \N \;|\; \eta^{t+1}_i \neq f(\eta^t)_i \}$.

Let $T \geq 1$ be an arbitrary positive integer.
We will now construct a system of geometric shapes on the vertex set $W = \{ (i, t) \in \Z \times \N \;|\; t \leq T, |i| \leq r (T - t) \}$ parametrized by the states $S_n \subset S$ and a set $C \subset W$ such that $\eta^t_i \in S_n$ for all $(i, t) \in C$.
For a set $P \subset \R^2$, denote its border by $\partial P$.
By a \emph{polygon} we mean a closed and bounded subset of $\R^2$ that is a finite union of triangles and line segments.
In particular, a polygon may not be equal to the closure of its interior.

\begin{definition}
  \label{def:Polygon}
  For $n \in \{ 1, \ldots, k \}$, define a \emph{space-time polygon of level $n$} as a pair $\tuple{P, X}$ satisfying the following conditions.
  \begin{itemize}
  \item[a)] $P \subset \R^2$ is a non-empty simply connected polygon.
  \item[b)] $X = \tuple{w_1, \ldots, w_\ell}$ is a cyclic list of vertices in $W \cap \partial P$, which occur in counterclockwise order on $\partial P$, and $\partial P$ is the union of the line segments $\overline{w_i w_{i+1}}$.
  \item[c)] For all $i \in \{1, \ldots, \ell\}$ we have $w_{i+1} - w_i \in \Delta^L_n \cup \Delta^R_n \cup \Delta^C_n \cup \Delta^B_n$.
  \item[d)] If $(i,t), (j,t) \in X$ and $0 < |i - j| \leq 2r$, then the line segment from $(i,t)$ to $(j,t)$ is a subset of $P$.
  \item[e)] Each $w = (i,t) \in X$ satisfies $\eta^t_i \in S_n$ and one of the following conditions:
    \begin{enumerate}
    \item
      There exists $\tilde n > n$ and $j \in \{- \tilde n r, \ldots, \tilde n r\}$ with $\eta_{i+j}^{t-1} \in S_{\tilde n}$.
      Then $w$ has \emph{type 1}, the coordinate $v = (i+j, t-1) \in W$ is the \emph{support point of $w$ at level $\tilde n$}, and we denote $v = \mathrm{supp}(w)$.
      If there are several candidates for the support point, we choose one that maximizes the level $\tilde n$.
    \item
      There is an error at the coordinate that precedes $w$ in the trajectory: we have $(i, t-1) \in E$.
      Then $w$ has \emph{type 2}.
    \item
      There exist $j \in U_n$ and $j' \in V_n$ such that the triangle spanned by $w$, $(i+j, t-1)$ and $(i+j', t-1)$ is a subset of $P$.
      Then $w$ has \emph{type 3}, and $w$ is a \emph{child} of $(i+j, t-1)$ and $(i+j', t-1)$.
    \end{enumerate}
  \end{itemize}
\end{definition}

Item (d) in Definition~\ref{def:Polygon} is a technical constraint that is easy to enforce during the construction and makes the proof of Lemma~\ref{lem:BorderSize} simpler.
If the condition does not hold, we say that the pair $(i,t), (j,t)$ violates condition (d) in $\tuple{P, X}$.
Similarly, the constraint on maximizing $\tilde n$ in item (e) simplifies the proofs of Lemma~\ref{lem:VertexDisjoint} and Lemma~\ref{lem:NotInside} without affecting the construction in any other way.

See Figure~\ref{fig:Polygon} for a visualization of a space-time polygon.
The shaded area is $P$, the black dots are the vertices $w_i$ in the list $X$, and each arrow is a line segment $\overline{w_i w_{i+1}}$.
An edge is in $\Delta^C_n$ if it points east, in $\Delta^B_n$ if it points west, in $\Delta^L_n$ if southwest and in $\Delta^R_n$ if northwest.
Some edges in the figure are labeled with the set they belong to.
Note that horizontal line segments may be traversed twice, as is the case near the rightmost vertex.
The darker shaded areas at the bottom are parts of a space-time polygon of level $\tilde n$ for some $\tilde n > n$, and the dashed lines denote the support point relation.
The vertices  in $X$ with dashed lines have type 1.
Vertices enclosed in boxes are produced by errors, and they have type 2.
Other vertices have type 3; one of the triangles is depicted in the figure.

\begin{figure}[ht]
  \begin{center}
    \begin{tikzpicture}

      \coordinate (v1) at (0,0);
      \coordinate (v2) at (1.5,0);
      \coordinate (v3) at (3,0);
      \coordinate (v4) at (2.5,1);
      \coordinate (v5) at (4,1);
      \coordinate (v6) at (5.5,1);
      \coordinate (v7) at (5,0);
      \coordinate (v8) at (6.5,0);
      \coordinate (v9) at (8,0);
      \coordinate (v10) at (7.5,-1);
      \coordinate (v11) at (8.5,-1);
      \coordinate (v12) at (9.5,-1);
      \coordinate (v13) at (8,0);
      \coordinate (v14) at (7.5,1);
      \coordinate (v15) at (7,2);
      \coordinate (v16) at (6,3);
      \coordinate (v17) at (5.5,4);
      \coordinate (v18) at (5,3);
      \coordinate (v19) at (4,3);
      \coordinate (v20) at (3,4);
      \coordinate (v21) at (2,4);
      \coordinate (v22) at (1.5,3);
      \coordinate (v23) at (0.5,3);
      \coordinate (v24) at (-0.5,2);
      \coordinate (v25) at (-1,1);
      \coordinate (v26) at (0.5,1);
      \coordinate (v27) at (9,1); 

      \fill [black!20] (v1) --(v2) --(v3) --(v4) -- (v5) --(v6) --(v7) --(v8) --(v9) --(v10) --(v11) --(v12) --(v13) --(v14) --(v15) --(v16) --(v17) --(v18) --(v19) --(v20) --(v21) --(v22) --(v23) --(v24) --(v25) --(v26) -- (v1);
      
      \foreach \i in {1, ..., 27}{
        \fill (v\i) circle (0.07);
        \node (n\i) at (v\i) {};
      }
      \foreach \i [count=\ii] in {2, ..., 26}{
        \draw [thick,->] (n\ii) -- (n\i);
      }
      \draw [thick,->] (n26) -- (n1);
      \draw [thick,->]
      ($(n14.south east)!0.5!(n14.east)$) --
      ($(n27.south west)!0.5!(n27.west)$);
      \draw [thick,->]
      ($(n27.north west)!0.5!(n27.west)$) --
      ($(n14.north east)!0.5!(n14.east)$);

      \node [below] at ($(v7)!0.5!(v8)$) {$\Delta^C_n$};
      \node [above] at ($(v18)!0.5!(v19)$) {$\Delta^B_n$};
      \node [above right] at ($(v15)!0.3!(v16)$) {$\Delta^R_n$};
      \node [above left] at ($(v23)!0.7!(v24)$) {$\Delta^L_n$};

      \node (a) at (-1.5,-1) {};
      \node (a1) at (-1.5,0) {};
      \node (a2) at (0,-1) {};
      \fill [black!40] (0.5,-1.5) -- (a2.center) -- (a1.center) -- (-1.75,0) -- (-1.75,-1.5);
      \fill (a1) circle (0.07);
      \fill (a2) circle (0.07);
      \draw [thick,->] (0.5,-1.5) -- (a2);
      \draw [thick,->] (a2) -- (a1);
      \draw [thick] (a1) -- (-1.75,0);
      \draw [dashed] (v1) -- (a);
      \draw [dashed] (v25) -- (a1);

      \fill [black!40] (3.5,-1) -- (4,-1.5) -- (3,-1.5);
      \fill (3.5,-1) circle (0.07);
      \node (b) at (3.5,-1) {};
      \draw [thick] (3,-1.5) -- (b);
      \draw [thick,<-] (b) -- (4,-1.5);
      \draw [dashed] (v2) -- (b);
      \draw [dashed] (v3) -- (b) -- (v7);

      \foreach \i in {5,8,10,11,12,27}{
        \draw ($(v\i)+(-0.1,-0.1)$) rectangle ++(0.2,0.2);
      }

      \draw (v15) -- ++(-0.5,-1) -- ++(1,0);
      
      \foreach \x in {-1.5,-1,...,10}{
        \foreach \y in {-1,0,1,2,3,4}{
          \fill (\x,\y) circle (0.03cm);
        }
      }
      
    \end{tikzpicture}
  \end{center}
  \caption{A space-time polygon.}
  \label{fig:Polygon}
\end{figure}
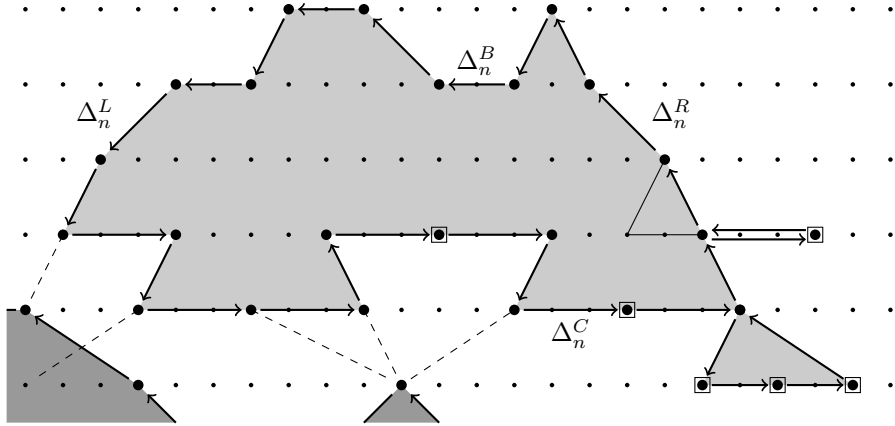

Intuitively, a space-time polygon on level $n$ records the `reason' for the vertices in the set $C$ being in the state $n$.
It resembles the notion of \emph{truss} in~\cite{To80}.
The idea of the proof is that a coordinate $(i, t) \in \Z^2$ with $\eta^t_i \neq 0$ gives rise to a finite collection of space-time polygons, a constant fraction of whose vertices have type 2, that is, are caused by errors.
The number of such collections with $N$ vertices in total grows exponentially with $N$, and by choosing the error rate $\epsilon$ small enough, we can bound the probability that any collection of polygons enables $\eta^t_i \neq 0$.
We begin by showing that in a single polygon $\tuple{P, X}$, the number of vertices of type 1 or 2 grows linearly with the size of the set $X$.

\begin{lemma}
\label{lem:BorderSize}
For all $n \in \{1, \ldots, k\}$, there exists $\delta_n > 0$ such that for any space-time polygon $\tuple{P, X}$ of level $n$, there are has at least $\delta_n |X|$ elements of type 1 or 2 in $X$.
\end{lemma}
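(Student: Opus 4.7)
The plan is to analyze the balance of edge directions around the closed CCW boundary $\partial P$ and combine it with a local geometric argument at each type 3 vertex.

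Let $\ell_L, \ell_R, \ell_C, \ell_B$ denote the number of edges of $\partial P$ lying in $\Delta^L_n$, $\Delta^R_n$, $\Delta^C_n$, $\Delta^B_n$ respectively. The vertical balance $\sum_i (w_{i+1} - w_i)_2 = 0$ forces $\ell_L = \ell_R$. Using the pointwise upper bounds $u_n, -v_n, 2r, -1$ on the horizontal component in the four edge classes, the horizontal balance $\sum_i (w_{i+1} - w_i)_1 = 0$ yields
\[
(v_n - u_n)\,\ell_L + \ell_B \le 2r\,\ell_C.
\]
Combined with $v_n - u_n > 2K \ge 2$ from \eqref{eq:BoundOnK}, this bounds each of $\ell_L = \ell_R$ and $\ell_B$ by an explicit constant multiple of $\ell_C$, so $\ell_C \ge c_1\,|X|$ for a constant $c_1 = c_1(r, u_n, v_n) > 0$ (one may take $c_1 = 1/(1 + 2r + 4r/(v_n - u_n))$).

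The second step is to convert the lower bound on $\ell_C$ into a lower bound on the number of type-1-or-2 vertices by showing that at least one endpoint of each $\Delta^C_n$ edge must be of type 1 or 2. For a type 3 vertex $w = (i, t) \in X$, the witness triangle with apex $w$ and base on time level $t-1$ is contained in $P$, so the interior of $P$ forms a downward-opening wedge at $w$. Since $\partial P$ is traversed CCW with the interior on the left, a local angular analysis at $w$ (using that the two polygon edges incident to $w$ must bound this wedge from above) constrains their possible directions; I plan to verify, by a case split on the signs of $u_n$ and $v_n$, that no type 3 vertex can have both its incoming and outgoing edges in $\Delta^C_n$. Together with the bound $\ell_C \ge c_1 |X|$, and the observation that each vertex is incident to at most two $\Delta^C_n$ edges (one incoming, one outgoing), this yields
\[
\#\{w \in X : w \text{ has type 1 or 2}\} \ge \tfrac{1}{2}\,\ell_C \ge \delta_n\,|X|
\]
with $\delta_n = c_1/2$, completing the proof.

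The main obstacle is the angular case analysis at a type 3 vertex. When $u_n \le 0 \le v_n$ the wedge straddles the south direction and the forbidden edge configurations are easy to pin down, but in the remaining regimes the wedge lies in a single lower quadrant and reflex interior angles can in principle accommodate otherwise-forbidden $\Delta^C_n$ edges. In those cases I expect to need either a more careful amortized analysis, or to replace $\Delta^C_n$ by a different ``critical edge class'' (such as $\Delta^B_n$ or an analogous set) chosen so that the wedge always lies in the opposite hemisphere, and then to rerun the counting symmetrically. Condition~(d) of Definition~\ref{def:Polygon} is expected to play a role here by ruling out degenerate same-time vertex configurations that could otherwise evade the bound.
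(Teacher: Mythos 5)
Your first step is correct and close in spirit to the paper's, though packaged a bit differently: the paper uses the single linear functional $M_n(i,t) = i + t(u_n+v_n)/2$, shows it is strictly positive on $\Delta^C_n$ and strictly negative on the other three edge classes, and deduces $|I| \geq |X|/(1+\delta^{-2})$ from the vanishing of the cyclic sum; your vertical balance $\ell_L = \ell_R$ together with the horizontal balance inequality $(v_n-u_n)\ell_L + \ell_B \le 2r\,\ell_C$ is an equivalent bookkeeping of the same fact and your constant $c_1$ checks out. (You should also deal with the degenerate cases $\ell_C = 0$ and very short lists, but that is routine.)

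The gap is in step two, and you have in fact half-noticed it yourself. The conclusion you need, and correctly state, is \emph{for every $\Delta^C_n$ edge $\overline{w_i\,w_{i+1}}$, at least one of its two endpoints has type 1 or 2}. But the lemma you propose to prove --- that no type-3 vertex has \emph{both} its incident edges in $\Delta^C_n$ --- is strictly weaker and does not imply it. Under your lemma, one could still have a $\Delta^C_n$ edge whose two endpoints are both type 3, as long as each of them has its \emph{other} incident edge outside $\Delta^C_n$ (e.g. a boundary that alternates $\Delta^C_n$ and $\Delta^R_n$ edges with every vertex of type 3). In that situation your counting $2\ell_C \le (\#\text{type-3})\cdot 1 + (\#\text{type-1-or-2})\cdot 2$ only gives $\#\{\text{type 1 or 2}\} \ge 2\ell_C - |X|$, which is vacuous because $c_1 < 1/2$. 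The paper's argument is genuinely a \emph{four-vertex} one: assume both $w_i$ and $w_{i+1}$ are type 3, use each vertex's triangle (whose slanted side is a $\Delta^R_n$ or $\Delta^L_n$ direction and is contained in $P$) to bound the interior angle below $\pi$ and thereby pin down the direction of the \emph{outer} edges $w_i - w_{i-1}$ and $w_{i+2}-w_{i+1}$, then check that $w_{i-1}$ and $w_{i+2}$ lie on the same time level within horizontal distance $2r$, and finally invoke condition~(d) to get a contradiction. The reflex-angle cases you flag as an obstacle are exactly what this mechanism handles: the inclusion of the triangle leg in $P$ caps the interior angle strictly below $\pi$ at \emph{both} endpoints simultaneously, so reflex angles never enter, and condition~(d) closes the case rather than being an optional patch. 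So the route you sketch for step two needs to be replaced by the pairwise argument; as written it would not establish the lemma.
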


\begin{proof}
  Let $X = \tuple{w_1, \ldots, w_\ell}$, and denote $I = \{i \in \{1, \ldots, \ell\} \;|\; w_{i+1} - w_i \in \Delta^C_n \}$ and $J = \{1, \ldots, \ell\} \setminus I$.
  Define a linear function $M_n : \R^2 \to \R$ by $M_n(i, t) = i + t (u_n + v_n)/2$.
  We claim that $M_n$ is positive on $\Delta^C_n$ and negative on $\Delta^L_n \cup \Delta^R_n \cup \Delta^B_n$.
  The case of $\Delta^C_n$ and $\Delta^B_n$ is clear, since we have $t = 0$, and $i$ is positive on $\Delta^C_n$ and negative on $\Delta^B_n$.
  Let then $(i, -1) \in \Delta^L_n$, so that $i \leq u_n$.
  Then we have $M_n(i, -1) = i - (u_n + v_n)/2 \leq (u_n - v_n)/2 < 0$, since $u_n < v_n$.
  For $(i, 1) \in \Delta^R_n$ we have $i \leq -v_n$, which implies $M_n(i, 1) = i + (u_n + v_n)/2 \leq (u_n - v_n)/2 < 0$.
  
  Since these sets are finite, there exists $0 < \delta < 1$ such that $\delta < M_n(v) < \delta^{-1}$ for all $v \in \Delta^C_n$ and $-\delta^{-1} < M_n(v) < -\delta$ for all $v \in \Delta^R_n \cup \Delta^L_n \cup \Delta^B_n$.
  Since $X$ is a circular list, we have $\sum_{i=1}^\ell M_n(w_{i+1} - w_i) = 0$ by linearity of $M_n$.
  The sets $I$ and $J$ form a partition of $\{1, \ldots, \ell\}$, so we have $\delta^2 |J| \leq |I| \leq \delta^{-2} |J|$.
  From this we also deduce $|I| \geq |X| / (1 + \delta^{-2})$.

  We present a geometric argument for the fact that the number of vertices of type 1 or 2 in $X$ is at least $|I|/2$.
  Let $i \in \{1, \ldots, \ell\}$ be such that $w_{i+1} - w_i \in \Delta^C_n$, that is, $i \in I$.
  Suppose that both $w_i$ and $w_{i+1}$ have type 3.
  Then there exist $u, v \in \Z^2$ with $w_i - u \in \Delta^R_n$, $v - w_{i+1} \in \Delta^L_n$ and $\overline{u w_i}, \overline{w_{i+1} v} \subset P$.
  The angle $\angle w_{i-1} w_i w_{i+1}$ cannot be larger than $\angle u w_i w_{i+1} < \pi$, which implies $w_i - w_{i-1} \in \Delta^R_n$.
  By a symmetric argument we have $w_{i+2} - w_{i+1} \in \Delta^L_n$.
  Denoting $w_{i-1} = (a, t)$ and $w_{i+2} = (b, s)$, a simple calculation shows that $t = s$ and $|a - b| \leq 2r$.
  We must also have $a \neq b$, since $P$ is simply connected.
  This violates condition (d), so one of $w_i$ or $w_{i+1}$ has type 1 or 2.
  See Figure~\ref{fig:BorderCount} for a visualization of this argument.
  It follows that the number of type-1 or 2 vertices in $X$ is at least $|I|/2 \geq |X| / (2 + 2\delta^{-2})$.
  We define $\delta_n = (2 + 2\delta^{-2})^{-1}$, which finishes the proof.
\end{proof}

\begin{figure}
  \begin{center}
    \begin{tikzpicture}

      \coordinate (wi) at (0,2);
      \coordinate (wii) at (6,2);
      \coordinate (u) at (0.5,0);
      \coordinate (v) at (5.5,0);
      \coordinate (wj) at (1.5,0);
      \coordinate (wk) at (4.5,0);

      \foreach \name in {wi,wii,u,v,wj,wk}{
        \node (n\name) at (\name) {};
      }
      \foreach \name in {wi,wii,wj,wk}{
        \fill (\name) circle (0.07);
      }
      \foreach \name in {u,v}{
        \draw (\name) circle (0.07);
      }
      
      \draw [thick,->] (nwj) -- (nwi);
      \draw [thick,->] (nwi) -- (nwii);
      \draw [thick,->] (nwii) -- (nwk);
      \draw [dotted] (nu) -- (nwi);
      \draw [dotted] (nwii) -- (nv);

      \node [below=0.2cm] at (u) {$u$};
      \node [below=0.2cm] at (wj) {$w_{i-1}$};
      \node [left=0.2cm] at (wi) {$w_i$};
      \node [right=0.2cm] at (wii) {$w_{i+1}$};
      \node [below=0.2cm] at (wk) {$w_{i+2}$};
      \node [below=0.2cm] at (v) {$v$};
      
    \end{tikzpicture}
  \end{center}
  \caption{A visualization of the proof of Lemma~\ref{lem:BorderSize}.}
  \label{fig:BorderCount}
\end{figure}
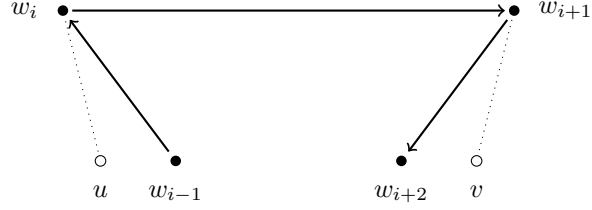

We will now construct a set $Q(C)$ of disjoint space-time polygons of level $n$ whose union contains $C$, the set of initial vertices.
We construct the polygons iteratively, maintaining a set $Q_p(C)$ of `incomplete polygons' that are allowed to violate condition (d) and the second part of condition (e) in Definition~\ref{def:Polygon}, meaning that the vertices at time $t$ may not have a type.
We start with $Q_0(C)$ being the collection of single-vertex polygons $\tuple{\{w\}, \tuple{w}}$ for $w \in C$.

Assume then that we have constructed the set $Q_p(C)$ for some $p \geq 0$.
There are three possible conditions that prevent us from choosing $Q(C) = Q_p(C)$, which we refer to as \emph{violations}:
\begin{enumerate}
\item Some vertex $w \in W$ occurs in the list of some polygon of $Q_p(C)$, and has no type in any polygon in which it occurs.
\item Some pair of vertices $v, w$ occurs on the list of some polygon of $Q_p(C)$ (not necessarily consecutively), and violates condition (d) in every polygon in which it occurs.
\item Some polygons of $Q_p(C)$ have nonempty intersection.
\end{enumerate}
If none of these violations hold, then $Q_p(C)$ consists of disjoint space-time polygons of level $n$.
Namely, if $w \in W$ is a vertex of some polygon $\tuple{P, X} \in Q_p(C)$, then it has a type in one of them (since violation 1 does not hold), which must be $\tuple{P, X}$ since the polygons are disjoint.
Similarly, if $v, w \in W$ occur in the list of some polygon $\tuple{P, X} \in Q_p(C)$, then it does not violate condition (d) in some polygon (since violation 2 does not hold), which must be $\tuple{P, X}$.
We now show how to handle the violations one by one.
The first two cases involve adding new simple polygons to the set $Q_p(C)$, and they are visualized in Figure~\ref{fig:Conditions}.
In the last case, we show how to merge two intersecting polygons into one, which is a more involved process.

\begin{figure}[ht]
  \begin{center}
    \begin{tikzpicture}


      \coordinate (v1) at (0,0);
      \coordinate (v2) at (1,0);
      \coordinate (v3) at (2,0);

      \fill [black!30] (0,0) rectangle (2,0.5);

      \foreach \i in {1,2,3}{
        \fill (v\i) circle (0.07cm);
        \node (nv\i) at (v\i) {};
      }
      \draw [thick,->] (nv1) -- (nv2);
      \draw [thick,->] (nv2) -- (nv3);

      \node at (1,-0.75) {$\Downarrow$};
      
      
      \begin{scope}[yshift=-2cm]
        \coordinate (v1) at (0,0);
        \coordinate (v2) at (1,0);
        \coordinate (v3) at (2,0);
        \coordinate (v4) at (0.5,-1);
        \coordinate (v5) at (1.5,-1);
        
        \fill [black!20] (0,0) rectangle (2,0.5);
        \fill [black!30] (v2) -- (v4) -- (v5);
        
        \foreach \i in {1,3}{
          \fill [black!50] (v\i) circle (0.07cm);
          \node (nv\i) at (v\i) {};
        }
        \foreach \i in {2,4,5}{
          \fill (v\i) circle (0.07cm);
          \node (nv\i) at (v\i) {};
        }
        \draw [thick,->,black!50] (nv1) -- (nv2);
        \draw [thick,->,black!50] (nv2) -- (nv3);
        \draw [thick,->] (nv2) -- (nv4);
        \draw [thick,->] (nv4) -- (nv5);
        \draw [thick,->] (nv5) -- (nv2);
      \end{scope}


      \begin{scope}[xshift=4cm]
        \coordinate (v1) at (0,0);
        \coordinate (v2) at (1,0);
        \coordinate (v3) at (0,1);
        \coordinate (v4) at (3,0);
        \coordinate (v5) at (2,0);
        \coordinate (v6) at (3,1);
        
        \fill [black!30] (v1) -- (v2) -- (v3);
        \fill [black!30] (v4) -- (v5) -- (v6);
        
        \foreach \i in {1,...,6}{
          \fill (v\i) circle (0.07cm);
          \node (nv\i) at (v\i) {};
        }
        \draw [thick,->] (nv1) -- (nv2);
        \draw [thick,->] (nv2) -- (nv3);
        \draw [thick,->] (nv6) -- (nv5);
        \draw [thick,->] (nv5) -- (nv4);
        
        \node at (1.5,-0.75) {$\Downarrow$};
      \end{scope}


      \begin{scope}[xshift=4cm,yshift=-2cm]
        \coordinate (v1) at (0,0);
        \coordinate (v2) at (1,0);
        \coordinate (v3) at (0,1);
        \coordinate (v4) at (3,0);
        \coordinate (v5) at (2,0);
        \coordinate (v6) at (3,1);
        
        \fill [black!20] (v1) -- (v2) -- (v3);
        \fill [black!20] (v4) -- (v5) -- (v6);
        
        \foreach \i in {1,3,4,6}{
          \fill [black!50] (v\i) circle (0.07cm);
          \node (nv\i) at (v\i) {};
        }
        \foreach \i in {2,5}{
          \fill (v\i) circle (0.07cm);
          \node (nv\i) at (v\i) {};
        }
        \draw [thick,->,black!50] (nv1) -- (nv2);
        \draw [thick,->,black!50] (nv2) -- (nv3);
        \draw [thick,->,black!50] (nv6) -- (nv5);
        \draw [thick,->,black!50] (nv5) -- (nv4);

        \draw [thick,->]
        ($(nv2.south east)!0.5!(nv2.east)$) --
        ($(nv5.south west)!0.5!(nv5.west)$);
        \draw [thick,->]
        ($(nv5.north west)!0.5!(nv5.west)$) --
        ($(nv2.north east)!0.5!(nv2.east)$);
      \end{scope}
      
    \end{tikzpicture}
    \caption{Resolving violations 1 (left) and 2 (right).}
    \label{fig:Conditions}
  \end{center}
  
\end{figure}
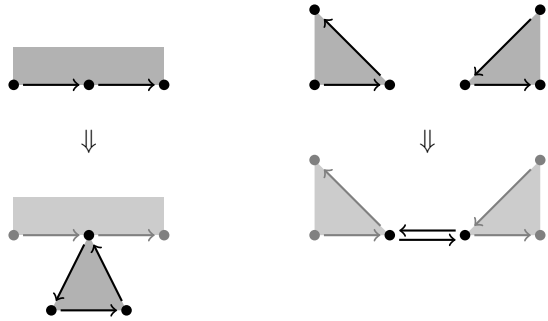

Suppose that violation 1 holds: there exists $\tuple{P, X} \in Q_p(C)$ and $w = (i, t) \in X$ that has no type in any polygon of $Q_p(C)$ it belongs to.
In particular, $(i, t-1)$ does not contain an error and $\eta^{t-1}_{i + j} \notin S^+_{n+1}$ for all $-r \leq j \leq r$.
Since $U_n$ and $V_n$ are $a_n, a_{n+1}$-forcing sets and $\eta^t_i \in S_n$, there exist $j_L \in U_n$ and $j_R \in V_n$ such that $\eta^{t-i}_{i + j_L}, \eta^{t-1}_{i + j_R} \in S_n$.
We also have $(j_L, -1) \in \Delta^R_n$, $(j_R, 1) \in \Delta^L_n$ and $(j_R - j_L, 0) \in \Delta^C_n$.
Denote $X' = \tuple{w, (i+j_L,t-1), (i+j_R,t-1)}$, and let $P' \subset \R^2$ be the triangle spanned by these three points.
Then $\tuple{P', X'}$ is a (possibly incomplete) three-vertex polygon and $w$ has type 3 in it.
We define $Q_{p+1}(C) = Q_p(C) \cup \{\tuple{P', X'}\}$, and $w$ is no longer a witness for violation 1.

Suppose then that violation 2 holds, so that some polygon $\tuple{P, X} \in Q_p(C)$ violates condition (d): some $v = (i,t), w = (j,t) \in X$ satisfy $0 < i - j \leq 2r$.
Thus we have $v - w \in \Delta^C_n$ and $w - v \in \Delta^B_n$.
Let $P' = \overline{v w}$ and $X' = \tuple{v, w}$.
Then $\tuple{P', X'}$ is a polygon with two vertices that satisfies every condition of Definition~\ref{def:Polygon} except the latter part of (e).
In particular, the pair $v, w$ does not violate condition (d) in $\tuple{P', X'}$.
We define $Q_{p+1}(C) = Q_p(C) \cup \{\tuple{P', X'}\}$, and then $v, w$ is no longer a witness for violation 2.

Suppose finally that violation 3 holds: there exist $\tuple{P, X}, \tuple{P', X'} \in Q_p(C)$ such that $P \cap P' \neq \emptyset$.
We construct a new polygon that contains the union $P \cup P'$.
Let $\tilde P$ be the polygon obtained by filling all holes in $P \cup P'$, and let $\tilde X = \tuple{w_1, \ldots, w_z}$ be the list obtained by traversing the border of $\tilde P$ in the counterclockwise direction and enumerating the elements of $X \cup X'$ in the order they are encountered.
Finally, let $\hat P$ be the simply connected polygon whose border is exactly the union of the line segments  $\overline{w_i w_{i+1}}$.
We claim that $(\hat P, \tilde X)$ is a valid space-time polygon.
It is easy to see that $w_i \in W \cap \partial \hat P$ for all $i \in \{1, \ldots, z\}$, and we now show that the differences of successive elements of the list $\tilde X$ have the correct form.

\begin{lemma}
\label{lem:GoodBorder}
For all $i \in \{1, \ldots, z\}$ we have $w_{i+1} - w_i \in \Delta^L_n \cup \Delta^R_n \cup \Delta^C_n \cup \Delta^B_n$.
\end{lemma}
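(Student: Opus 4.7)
I would analyze the counterclockwise arc $\gamma \subset \partial \tilde P$ from $w_i$ to $w_{i+1}$ and express $w_{i+1} - w_i$ as a sum of contributions from the edges of $P$ and $P'$ that $\gamma$ traverses. The first step is to decompose $\gamma$ into maximal straight sub-arcs. Because $w_i, w_{i+1}$ are consecutive in $\tilde X$, no vertex of $X \cup X'$ lies in the interior of $\gamma$, so each interior endpoint of a sub-arc is an intersection point of $\partial P$ and $\partial P'$, and each sub-arc lies in the interior of a single edge of $P$ or $P'$. Consequently $w_{i+1} - w_i$ is an integer vector equal to a sum of positive scalar multiples of edge vectors drawn from $\Delta^L_n \cup \Delta^R_n \cup \Delta^C_n \cup \Delta^B_n$.

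The second step is to control this sum using the cyclic structure of the four direction sets, which occupy disjoint angular sectors and appear in the cyclic counterclockwise order $\Delta^C_n, \Delta^R_n, \Delta^B_n, \Delta^L_n$ around the origin. For a single space-time polygon the counterclockwise boundary traversal respects this cyclic order on consecutive edge directions, since the interior of the polygon must lie to the left of each edge. I would prove that this monotonicity extends to $\partial \tilde P$: at each intersection $q \in \partial P \cap \partial P'$ encountered by $\gamma$, the outgoing direction-sector is no earlier than the incoming one in the cyclic order, because a retrograde transition at $q$ would force the outgoing edge to enter the interior of one of $P$ or $P'$, contradicting the fact that $\gamma$ lies on the outer boundary of $\tilde P$.

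The third step exploits the integrality $w_{i+1} - w_i \in \Z^2$. Writing $w_{i+1} - w_i = \sum_k \lambda_k d_k$ with $\lambda_k > 0$ and $d_k$ the allowed direction of the $k$-th sub-arc, listed in cyclic-sector order, I would argue that contributions from $\Delta^L_n$ and $\Delta^R_n$ to the $t$-component can compensate only in a way consistent with $|\Delta t| \leq 1$, and similarly the horizontal contributions from $\Delta^C_n$ and $\Delta^B_n$ cancel appropriately. The proof then reduces to a short finite case analysis on the sectors visited, using the defining bounds $-r \leq \Delta x \leq u_n$ on $\Delta^L_n$, $-r \leq \Delta x \leq -v_n$ on $\Delta^R_n$, and $0 < |\Delta x| \leq 2r$ on $\Delta^C_n \cup \Delta^B_n$, together with the inequality $u_n < v_n$.

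The main obstacle I expect is verifying the cyclic monotonicity at intersection points and, more subtly, controlling how many times $\gamma$ can visit each sector. A naive bound on the number of sub-arcs is not obviously strong enough to yield $|\Delta t| \leq 1$, so the argument will likely need to exploit condition (d) of Definition~\ref{def:Polygon} together with local geometry at the two crossing edges at $q$ (which partition a neighborhood of $q$ into four angular wedges, two inside $\tilde P$ and two outside) to show that each sector contributes at most a single sub-arc to $\gamma$. Degenerate intersections involving horizontal edges, where multiple crossings may coincide, are the most delicate case and are precisely the ones that condition (d) appears designed to tame.
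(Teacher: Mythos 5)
Your step~2 contains a fatal sign error: the cyclic order is violated, not respected, at every intersection point of $\partial P$ and $\partial P'$, and in fact already at every concave vertex of a single polygon. To see this, let $q$ be an intersection point interior to an edge of $P$ and an edge of $P'$ (as you correctly observe, $q$ cannot be a vertex of $X \cup X'$). Locally $P$ and $P'$ are half-planes through $q$, so $\tilde P = P \cup P'$ occupies three of the four angular sectors cut out by the two edges. The interior angle of $\tilde P$ at $q$ is therefore strictly greater than $\pi$ (a reflex corner), and in a counterclockwise traversal the outgoing direction at a reflex corner rotates \emph{clockwise} relative to the incoming one. So the transition is always retrograde, the opposite of what you assert. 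Your justification (``a retrograde transition would force the outgoing edge to enter the interior of $P$ or $P'$'') is backwards: it is a \emph{forward} transition (interior angle $<\pi$) that would contradict $\tilde P$ occupying at least two sectors. The claimed monotonicity also fails for a single polygon: the space-time polygons are not convex (see Figure~\ref{fig:Polygon}), and concave vertices of $P$ force clockwise direction changes there too. Since your step~3 hinges on the boundary making only a single forward sweep through the four sectors, the argument does not close.

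Interestingly, the reflex-angle observation you would need is present in the paper's proof, but used in the opposite direction. The paper first bounds $|s' - s| \leq 1$ by noting that crossing an integer time level forces a new element of $\tilde X$. It then treats $s' = s \pm 1$ by showing $\gamma$ consists of at most two sub-segments with a single crossing, directly yielding an element of $\Delta^R_n$ or $\Delta^L_n$. For the harder case $s' = s$, the paper uses exactly the fact that every interior corner $\hat w_p$ of $\gamma$ is reflex (it is an intersection point, not a vertex of $X\cup X'$) to force the sequence of sub-segments to be ``first all decreasing then all increasing'' with $j_p$ monotone, and then bounds $|j - j'| \le |j_{p'} - j'_{p'}| + |j_{p'+1} - j'_{p'+1}| \le 2r$. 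That is, reflexivity constrains the path to be a single valley within one time band, rather than constraining direction sectors to advance cyclically.

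Your step~1 (decomposing $\gamma$ into maximal sub-arcs, each inside a single edge of $P$ or $P'$, with endpoints that are intersection points) is sound and is essentially shared with the paper. But to repair the proposal you would need to replace the forward-cyclic monotonicity with the retrograde/reflex property and then redo the case analysis; at that point you are essentially reconstructing the paper's argument, including the preliminary reduction to $|s' - s| \le 1$, which your proposal does not establish directly.
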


\begin{proof}
We first show that the vertical distance between $w_i = (j, s)$ and $w_{i+1} = (j', s')$ is at most $1$, which implies that it is either $0$ or $1$.
Namely, there is a path from $w_i$ to $w_{i+1}$ along the border of $\tilde P$, and assuming $s' \geq s + 1$ (the case $s \geq s' + 1$ being analogous), this path runs through some coordinate $(j'', s + 1)$ with $j'' \in \R$; let this be the first such coordinate along the path.
Then $(j'', s + 1)$ is the endpoint of some line segment formed by consecutive elements of $X$ or $X'$, so in particular we have $(j'', s+1) \in X \cup X'$, which implies $(j'', s + 1) = w_{i+1}$.

Suppose now that $s' = s + 1$.
If the path $L$ from $w_i$ to $w_{i+1}$ along $\partial \tilde P$ is a single line segment, then it is contained in the border of either $P$ or $P'$, and we are done.
Otherwise, it is formed from two segments, one from $P$ and one from $P'$, and there are coordinates $j_1, j_2 \in \Z$ such that $(j_1, s)$ and $w_{i+1}$ are consecutive elements in one of the lists (say $X$), and $w_i$ and $(j_2, s+1)$ are consecutive elements in the other list (say $X'$), and $L$ is formed form parts of the associated line segments, which have a single crossing point.
See Figure~\ref{fig:VerticalBorder}.
Furthermore, we are traversing the border of $P \cup P'$ in the counterclockwise direction, which means that $(j_1,s)$ is to the left of $w_i$ and $(j_2,s+1)$ is to the left of $w_{i+1}$.
Consequently, $j_1 \leq j$ and $j_2 \leq j'$.
Since $(j_2,s+1) - w_i$ and $w_{i+1} - (j_1,s)$ are elements of $\Delta^R_n$, we have
\[ -r \leq j_2 - j \leq j' - j \leq j' - j_1 \leq -v_n \]
which implies $w_{i+1} - w_i \in \Delta^R_n$.
The case for $s' = s - 1$ is similar, but with $\Delta^L_n$ in place of $\Delta^R_n$.

\begin{figure}[ht]
  \begin{center}
    \begin{tikzpicture}

      \coordinate (wi) at (0,0);
      \coordinate (wii) at (-2,3);
      \coordinate (j1) at (-3,0);
      \coordinate (j2) at (-5,3);

      \path [name path=wij2] (wi) -- (j2);
      \path [name path=wiij1] (wii) -- (j1);
      \path [name intersections={of=wij2 and wiij1,by=int}];
      \fill [black!20] (wi) -- (int) -- (wii) -- (j2) -- (j1);

      \foreach \name in {wi,wii,j1,j2}{
        \node (n\name) at (\name) {};
      }
      \foreach \name in {wi,wii,j1,j2}{
        \fill (\name) circle (0.07);
      }
      
      \draw [thick,->] (nwi) -- (nwii);
      \draw [dotted] (nj1) -- (int);
      \draw [thick,dotted,->] (int) -- (nwii);
      \draw [thick,dotted] (nwi) -- (int);
      \draw [dotted,->] (int) -- (nj2);

      \node [right=0.2cm] at (wi) {$w_i$};
      \node [right=0.2cm] at (wii) {$w_{i+1}$};
      \node [left=0.2cm] at (j1) {$(j_1,s)$};
      \node [left=0.2cm] at (j2) {$(j_2,s+1)$};
      \node at ($(int)+(0.4,0.2)$) {$L$};
      
    \end{tikzpicture}
  \end{center}
  \caption{A visualization of the case $s' = s + 1$ in the proof of Lemma~\ref{lem:GoodBorder}. The shaded area is part of the union of $P \cup P'$.}
  \label{fig:VerticalBorder}
\end{figure}
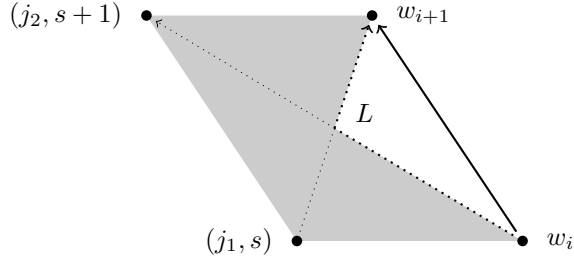

Finally, suppose $s' = s$.
Then there is a path $\gamma$ in $\partial P$ from $w_i$ to $w_{i+1}$ that either is a line segment, or consists of finitely many line segments and crosses an integral y-coordinate only at its endpoints.
In the former case, $\gamma$ is contained in a line segment between consecutive elements of either $X$ or $X'$, which implies $|j - j'| \leq 2 r$ and thus $w_{i+1} - w_i \in \Delta^C_n \cup \Delta^B_n$.
Suppose then that the latter case holds, and let $w_i = \hat w_1, \hat w_2, \ldots, \hat w_\ell = w_{i+1}$ be the endpoints of the line segments that form $\gamma$; equivalently, they are the vertices of $\tilde P$ that lie on $\gamma$.
We may assume that the y-coordinates of the $\hat w_p$ are between $s-1$ and $s$, and that $j' < j$.
The cases of the y-coordinates lying between $s$ and $s+1$ and/or $j' > j$ are symmetric, and in fact cannot actually happen in space-time polygons.

Each line segment $L_p$ from $\hat w_p$ to $\hat w_{p+1}$ is part of a longer segment between some $(j_p, s)$ and $(j'_p, s-1)$ in $\partial P$ or $\partial P'$.
We say that the segment $L_p$ is decreasing if $(j_p, s)$ is closer to $\hat w_p$ than $\hat w_{p+1}$, and increasing otherwise.
The interior angle $\alpha_p$ of $\tilde P$ at any endpoint $\hat w_p$ for $1 < p < \ell$ is greater than $\pi$, for otherwise $\hat w_p$ would be a vertex of $P$ or $P'$, and would lie in $X \cup X'$.
This implies two things.
First, there exists $1 \leq p' < \ell$ such that $L_p$ is decreasing for each $p \leq p'$, and increasing for each $p > p'$.
Second, $j_p < j_{p+1}$ holds whenever $1 \leq p < \ell$ and $p \neq p'$.
Since $j_1 = j$ and $j_\ell = j'$, this means $j_p \geq j$ for $p \leq p'$, and $j_p \leq j'$ for $p > p'$.
Each $w_p$ also has y-coordinate strictly above $s-1$, and hence we have $j'_{p'} < j'_{p'+1}$.
We now compute $|j - j'| \leq |j_{p'} - j'_{p'}| + |j_{p'+1} - j'_{p'+1}| \leq 2 r$, so that $w_{i+1} - w_i \in \Delta^C_n \cup \Delta^B_n$.
See Figure~\ref{fig:SegmentPath} for a visualization of this argument.
\end{proof}

\begin{figure}[ht]
  \begin{center}
    \begin{tikzpicture}[xscale=-1,scale=1.1]
      
      \coordinate (v1) at (0,0);
      \coordinate (v2) at (5,0);
      
      \coordinate (i1) at (v1);
      \coordinate (j1) at (-.5,-2);
      \coordinate (i2) at (-.75,0);
      \coordinate (j2) at (2,-2);
      \coordinate (i3) at (-2.5,0);
      \coordinate (j3) at (3,-2);
      \coordinate (i4) at (7,0);
      \coordinate (j4) at (1.5,-2);
      \coordinate (i5) at (v2);
      \coordinate (j5) at (4,-2);
      
      \coordinate (w2) at (intersection of i1--j1 and i2--j2);
      \coordinate (w3) at (intersection of i2--j2 and i3--j3);
      \coordinate (w4) at (intersection of i3--j3 and i4--j4);
      \coordinate (w5) at (intersection of i4--j4 and i5--j5);

      \draw ($(w5)+(60:0.2)$) arc (60:-158.2:0.2);
      \node [below left=0.08cm] at (w5) {$\alpha_5$};
      
      \fill (v1) circle (0.1cm);
      \fill (v2) circle (0.1cm);
      \draw [dotted] (-3,0) -- (7.5,0);
      \draw [dotted] (-3,-2) -- (7,-2);
      
      \foreach \n in {1,2,3,4,5}{
	\fill (i\n) circle (0.05cm);
	\fill (j\n) circle (0.05cm);
	\draw [dashed] (i\n) -- (j\n);
      }
      
      \path [draw,thick] (v1) -- (w2) -- (w3) -- (w4) -- node [above left] () {$\gamma$} (w5) -- (v2);
      
      \node [above] at (v1) {$w_i = \hat w_1$};
      \node [above left=-0.1 and 0.1] at (w2) {$\hat w_2$};
      \node [above left] at (w3) {$\hat w_3$};
      \node [above=.15cm] at (w4) {$\hat w_4$};
      \node [above right] at (w5) {$\hat w_5$};
      \node [above] at (v2) {$w_{i+1} = \hat w_6$};
      \node [left] at (7.5,0) {$s$};
      \node [left] at (7,-2) {$s-1$};
      \node [below right] at (j4) {$j'_{p'+1}$};
      \node [below left] at (j3) {$j'_{p'}$};
      \node [above] at (i4) {$j_{p'+1}$};
      \node [above right] at (i3) {$j_{p'}$};
      
    \end{tikzpicture}
  \end{center}
  \caption{The path $\gamma$ in the proof of Lemma~\ref{lem:GoodBorder}}
  \label{fig:SegmentPath}
\end{figure}
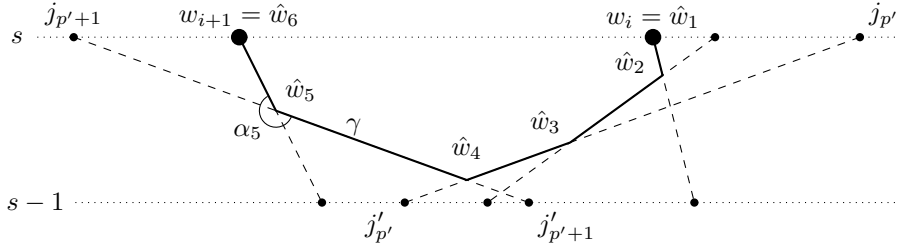

To finish the treatment of violation 3, we define $Q_{p+1}(C) = (Q_p(C) \setminus \{\tuple{P, X}, \tuple{P', X'}\}) \cup \{ \tuple{\hat P, \tilde X} \}$.

Now, all three constructions detailed above have the property that the union of all polygons in $Q_{p+1}(C)$ contains the union of all polygons in $Q_p(C)$.
In particular, if a vertex or pair of vertices witnesses violation 1 or 2 and we apply the construction to them, they cannot witness the violation again at any later stage.
Since there are finitely many sets of polygons on the vertex set $W$, we eventually reach a set $Q_p(C)$ that avoids violations 1, 2 and 3, and choose $Q(C) = Q_p(C)$.
As claimed, it is a set of disjoint space-time polygons of level $n$ whose union contains $C$.
At this point we also remark that in any polygon $\tuple{P, X} \in Q(C)$, any point $w \in X$ whose time coordinate is maximal is an element of $C$.
In particular, $X \cap C \neq \emptyset$.
However, note also that $C$ is not necessarily a subset of $X$, since some of its elements may cease to be vertices as polygons are merged, and become interior points instead.

We will now use the construction of space-time polygons to bound the probability of a given coordinate $(0, T)$ having a nonzero state in the random trajectory $\eta$.
For this purpose, suppose that $\eta^T_0 > 0$, and let $n_0 \in \{1, \ldots, k-1\}$ be such that $\eta^T_0 \in S_{n_0}$.
Define the initial set of vertices as $C_{n_0} = \{(0, T)\}$, and construct the set of level-$n_0$ space-time polygons $Q(C_{n_0})$.
Recall that each vertex $w = (i, t)$ that has type 1 in some polygon of $Q(C_{n_0})$ has a support point $\mathrm{supp}(w) = (j, t-1)$ at some level $n > n_0$ with $|i - j| \leq n r$.
Inductively, for each $n_0 < n \leq k$, we define $C_n \subset W$ as the set of level-$n$ support points of the polygons in $\bigcup_{p < n} Q(C_p)$:
\[
  C_n = \{ (i,t) \;|\; p < n, \tuple{P, X} \in Q(C_p), w \in X, (i,t) = \mathrm{supp}(w), \eta^t_i \in S_n \}
\]
This defines an indexed family $(Q(C_n))_{n = n_0}^k$ of sets of space-time polygons, where the polygons of each set $Q(C_n)$ are disjoint.

Fix two numbers $n_0 \leq n < \ell \leq k$, and let $\tuple{P, X} \in Q(C_\ell)$ and $\tuple{P', X'} \in Q(C_n)$ be two space-time polygons of different levels.
This means that $\tuple{P, X}$ is constructed at a later stage than $\tuple{P', X'}$, and its vertices have higher states.
The statements of Lemmas~\ref{lem:VertexDisjoint},~\ref{lem:BorderDisjoint} and~\ref{lem:NotInside} refer to these polygons.
The series of results shows that the two polygons can intersect only if $P \subset P'$.
First, we show that the vertex lists of the polygons are separated horizontally by at least $r$ steps.

\begin{lemma}
  \label{lem:VertexDisjoint}
  Let $w = (i, t) \in X$ be arbitrary.
  Then there are no elements $w' = (j, t) \in X'$ with $|i - j| \leq r$.
\end{lemma}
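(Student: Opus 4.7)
The plan is to argue by contradiction: assume $w = (i,t) \in X$ and $w' = (j,t) \in X'$ with $|i-j| \leq r$. Since $\tuple{P,X} \in Q(C_\ell)$ and $\tuple{P',X'} \in Q(C_n)$, condition (e) of Definition~\ref{def:Polygon} gives $\eta^t_i \in S_\ell$ and $\eta^t_j \in S_n$ with $\ell > n$, so in particular $\ell \geq n+1 \geq 2$; this small numerical slack is what the rest of the argument will exploit. I would then split on how $w'$ entered the vertex list $X'$: inspecting the iterative construction, new vertices appear only either as elements of the initial seed $C_n$, or as one of the two children introduced when the resolution of violation 1 is applied to some parent vertex. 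So either $w' \in C_n$, or there is a parent vertex $\tilde w' = (\tilde j, t+1)$ for which violation 1 was resolved producing $w'$ as a child, with $|\tilde j - j| \leq r$ (using $U_n \cup V_n \subset \{-r, \ldots, r\}$, which follows from $k_n = 1$).

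In the first case, $w' = \mathrm{supp}(u)$ for some vertex $u = (\tilde j, t+1)$ of a level-$p$ polygon with $p < n$, and the support was selected at level exactly $n$, so $|j - \tilde j| \leq n r$. Combined with $|i - j| \leq r$ this yields $|i - \tilde j| \leq (n+1) r \leq \ell r$; then $\eta^t_i \in S_\ell$ with $\ell > n$ makes $(i,t)$ an admissible support point of $u$ at the strictly higher level $\ell$, contradicting the maximality clause in the definition of type 1. In the second case, at the moment violation 1 was resolved on $\tilde w'$ the vertex $\tilde w'$ had no type in any polygon, so in particular the type-1 condition failed at $\tilde w'$: no cell $(k,t)$ with $|k - \tilde j| \leq \tilde n r$ can satisfy $\eta^t_k \in S_{\tilde n}$ for any $\tilde n > n$. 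But $|i - \tilde j| \leq 2 r \leq \ell r$ together with $\eta^t_i \in S_\ell$ would witness exactly this with $\tilde n = \ell$, again a contradiction.

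The conceptually delicate part is the second case, and the main obstacle to keep in mind is that $\tilde w'$ need not appear in any current vertex list once $Q(C_n)$ has stabilized: it may have become an interior point of a merged polygon via subsequent applications of violation 3. The argument must therefore be phrased purely in terms of intrinsic properties of $\eta$ around $\tilde w'$ that held at the moment of the violation-1 resolution---namely $\eta^{t+1}_{\tilde j} \in S_n$ and the failure of type 1 there---rather than in terms of the final polygon structure. Once this point is isolated, the arithmetic is routine: the uniform bound $\ell \geq n+1$ converts the distance estimates $(n+1) r$ and $2 r$ into the $\leq \ell r$ needed to invoke the type-1 clause at level $\tilde n = \ell$, closing both cases.
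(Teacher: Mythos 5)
Your proof takes the same two-case decomposition as the paper's (was $w'$ an element of the seed $C_n$, or was it introduced during a resolution of violation~1?), and uses the same arithmetic, including the crucial observation that $\ell \geq n+1$ converts $2r$ and $(n+1)r$ into quantities that are $\leq \ell r$. The substance matches the paper's argument closely. However, there is one gap: in your first case you assert ``$w' = \mathrm{supp}(u)$ for some vertex $u$ of a level-$p$ polygon with $p < n$,'' but that description of $C_n$ is only available when $n > n_0$. For $n = n_0$, the seed $C_{n_0} = \{(0,T)\}$ is a singleton whose element is not a support point of anything, so your argument simply does not apply there. You need to dispose of the subcase $w' = (0,T)$ separately. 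The paper's proof does so by noting that $(0,T)$ is the unique vertex at time $T$ anywhere in the whole system of polygons, while $w$ belongs to a polygon of level $\ell > n \geq n_0$, all of whose vertices lie at times strictly below $T$; hence $t < T$, $w' \neq (0,T)$, and therefore $n > n_0$, at which point your argument goes through. (Alternatively, one can argue that $w' = (0,T)$ would force $w = (0,T) = w'$ and then $\eta^t_i \in S_\ell \cap S_n = \emptyset$, a contradiction.) One minor terminology note: the two new vertices at time $t-1$ introduced when resolving violation~1 are, in the paper's convention, the \emph{parents} of the vertex that triggered the violation, which becomes their type-3 \emph{child}; your phrase ``children introduced'' has the relation reversed, though this does not affect the correctness of the argument.
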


\begin{proof}
  Suppose on the contrary that such an element exists, and consider the first point in the construction of $Q(C_n)$ where $w'$ is added to the vertex set of some polygon.
  If $w'$ was introduced as a parent of another vertex $(j', t+1)$ that was a witness to violation 1, then $|j - j'| \leq r$, which implies $|i - j'| \leq 2 r \leq \ell r$.
    Then $(j', t+1)$ has type 1 in any polygon it belongs to, since $w$ is its potential support point, which contradicts the assumption that it witnessed violation 1 by having no type.
  Thus $w'$ was not introduced as a parent of another vertex.
  Since this is the only way of adding new elements to the vertex lists of polygons, the vertex $w'$ must have been present in the first phase $Q_0(C_n)$, which implies $w' \in C_n$.

  The initial vertex $(0, T)$ is the only vertex with time $T$, and the only element of $C_{n_0}$.
  Since $w$ and $w'$ have the same y-coordinate, this implies $w' \neq (0, T)$ and $n > n_0$.
  Thus the vertex $w'$ is the level-$n$ support point of some vertex $v = (j', t+1)$ with $\eta^{t+1}_{j'} \in S_{n'}$, $n' < n$ and $|j - j'| \leq n r$.
  But then $|i - j'| \leq |i - j| + |j - j'| \leq (n+1) r \leq \ell r$, and $w$ is also a potential support point for $v$.
  Since $\ell > n$ and the support point is chosen in a way that maximizes its level, the vertex $w'$ cannot be the support point of $v$, which contradicts $w' \in C_n$.
\end{proof}

\begin{lemma}
  \label{lem:BorderDisjoint}
  $\partial P \cap \partial P' = \emptyset$
\end{lemma}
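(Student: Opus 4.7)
The plan is an argument by contradiction: suppose some $p \in \partial P \cap \partial P'$ exists, and locate $p$ relative to the vertex sets and edges of the two polygons. First I would observe, as an immediate consequence of Lemma~\ref{lem:VertexDisjoint} applied with $w' = w$, that $X \cap X' = \emptyset$; a common vertex $(i,t)$ would force $0 = |i - i| > r$. I would then record two geometric facts about the edges of space-time polygons. Every \emph{slanted} edge, that is, one whose vector lies in some $\Delta^L_{\cdot}$ or $\Delta^R_{\cdot}$, has vertical extent exactly $1$ and integer endpoints, so its only integer-$y$ points are its two endpoints. Every \emph{horizontal} edge, one whose vector lies in some $\Delta^C_{\cdot}$ or $\Delta^B_{\cdot}$, has integer endpoints and length at most $2r$.

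The core step is then to rule out a vertex of one polygon lying on the border of the other. Suppose $w = (i,t) \in X$ lies on an edge $e \subset \partial P'$. If $e$ is slanted, the first fact forces $w$ to be an endpoint of $e$, hence a vertex of $X'$, contradicting $X \cap X' = \emptyset$. If $e$ is horizontal, then its endpoints $(j_1, t), (j_2, t) \in X'$ satisfy $|j_1 - j_2| \leq 2r$, and since $w$ lies on $e$ at least one of $|i - j_1|, |i - j_2|$ is at most $r$, contradicting Lemma~\ref{lem:VertexDisjoint}. By symmetry, no vertex of $X'$ lies on $\partial P$ either.

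It remains to exclude points $p$ lying in the relative interior of an edge of each polygon; I would split on orientations. Two horizontal edges are parallel and can meet only by being collinear and overlapping along a segment, which forces an endpoint of one to lie in the interior of the other and reduces to the previous step. A horizontal edge lies on an integer-$y$ line, and by the first geometric fact a slanted edge meets that line only at an endpoint, so in the mixed case the intersection is a vertex and again reduces to the vertex-on-border step. In the slanted-slanted case, the $y$-ranges of the two edges must coincide on a common integer strip $[s, s+1]$; applying Lemma~\ref{lem:VertexDisjoint} to the endpoints of the two segments at times $s$ and $s+1$, together with the explicit bounds on horizontal displacements in the cones $\Delta^L$ and $\Delta^R$ and the inequality $u_{\cdot} < v_{\cdot}$ at each level, should force the two segments to be too far apart horizontally to cross.

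The main obstacle is the slanted-slanted sub-case, which is the only one requiring genuinely new geometric reasoning rather than a direct reduction to Lemma~\ref{lem:VertexDisjoint}; the other cases collapse quickly to the vertex-on-border step, which itself is a short consequence of Lemma~\ref{lem:VertexDisjoint} and the shape constraints on the edge cones.
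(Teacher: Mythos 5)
Your proposal is correct and follows essentially the same route as the paper's proof, which likewise reduces to a pair of intersecting border edges, observes that some pair of endpoints must share a $y$-coordinate, and then dispatches the horizontal and slanted cases via Lemma~\ref{lem:VertexDisjoint}. The only step you hedge on, the slanted--slanted case, is exactly the paper's final argument: writing the endpoints as $(i,s),(i',s{+}1)$ and $(j,s),(j',s{+}1)$, the cone constraints give $|i-i'|\leq r$ and $|j-j'|\leq r$, and since a crossing forces $i-j$ and $i'-j'$ to have opposite signs, one obtains $\min(|i-j|,|i'-j'|)\leq r$, contradicting Lemma~\ref{lem:VertexDisjoint}; the inequality $u_\cdot < v_\cdot$ you mention plays no role in this step.
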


\begin{proof}
  Assume for contradiction that $\partial P$ intersects $\partial P'$.
  Then there are consecutive vertices $v, w \in X$ and $v', w' \in X'$ such that the line segments $\overline{v w} \subset P$ and $\overline{v' w'} \subset P'$ intersect.
  The vertical distance between the endpoints of each segment is at most $1$, so some pair of endpoints, say $v$ and $v'$, have the same y-coordinate, say $v = (i, t)$ and $v' = (j, t)$ with $i \neq j$.
  If the segment $\overline{v w}$ is horizontal, then one of $v$ or $w$ is within distance $r$ of $v'$, contradicting Lemma~\ref{lem:VertexDisjoint}, and analogously if $\overline{v' w'}$ is horizontal.
  Thus both segments are non-horizontal, and we may assume $w = (i', t+1)$ and $w' = (j', t+1)$ with $i' \neq j'$.
  Since the segments intersect, we have either $i < j$ and $i' > j'$, or $i > j$ and $i' < j'$.
  In both cases, $|i - i'| \leq r$ and $|j - j'| \leq r$ imply $\min(|i - j|, |i' - j'|) \leq r$, again a contradiction with Lemma~\ref{lem:VertexDisjoint}.
\end{proof}

\begin{lemma}
  \label{lem:NotInside}
  There does not exist $w \in X'$ with $w \in P$.
\end{lemma}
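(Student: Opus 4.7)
My plan is to argue by contradiction. Suppose $w = (i,t) \in X'$ lies in $P$. Since $w \in X' \subset \partial P'$ and $\partial P \cap \partial P' = \emptyset$ by Lemma~\ref{lem:BorderDisjoint}, necessarily $w \in \mathrm{int}(P)$. The boundary $\partial P'$ is a closed connected curve disjoint from $\partial P$ that contains the interior point $w$, so it lies entirely in $\mathrm{int}(P)$; by simple connectivity of $P$, this gives $P' \subset P$.

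I would then exploit the support-point hierarchy to derive a contradiction. Let $w^* = (j^*, s^*) \in X'$ be a vertex of maximal time coordinate $s^*$ in $P'$. By the remark preceding the lemma, $w^* \in C_n$, so there exist a level $p < n$, a polygon $\langle \tilde{P}, \tilde{X} \rangle \in Q(C_p)$, and a vertex $\tilde{w} = (\tilde{j}, s^* + 1) \in \tilde{X}$ with $w^* = \mathrm{supp}(\tilde{w})$. The key sub-claim is $\tilde{P} \subset P$, and for this it suffices to show $\tilde{w} \in \mathrm{int}(P)$: once this is established, the connectedness argument of the first paragraph (applied to the pair $(\tilde{P}, P)$ of polygons at distinct levels $p < \ell$, using Lemmas~\ref{lem:VertexDisjoint} and~\ref{lem:BorderDisjoint}) yields $\partial \tilde{P} \subset \mathrm{int}(P)$ and hence $\tilde{P} \subset P$. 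To prove $\tilde{w} \in \mathrm{int}(P)$, I would combine the horizontal proximity $|\tilde{j} - j^*| \le n r$ (from the support-point definition) with the edge structure of $P$ near $w^* \in \mathrm{int}(P)$: the cross-section of $P$ at time $s^* + 1$ must cover a neighborhood of $(j^*, s^* + 1)$ by the $\Delta^R_\ell$ edge constraints, and Lemma~\ref{lem:VertexDisjoint} applied at time $s^* + 1$ then places $\tilde{w}$ safely inside $P$.

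Iterating this reduction through the strictly descending sequence of levels $n > p > p' > \cdots \ge n_0$, while the maximum time of the current polygon strictly increases at each step, after at most $n - n_0$ iterations we obtain $P_{n_0} \subset P$ for the unique polygon $P_{n_0} \in Q(C_{n_0})$ containing the initial vertex $(0, T)$. This forces $(0, T) \in P$ and hence $T \le \max\{s : (j, s) \in X\}$. But the maximum time of any vertex of $X$ is at most $T - 1$: every vertex of $X$ arises either as a support point (whose time is one less than that of a vertex at a strictly lower level, hence at most $T - 1$) or through violation resolution (which preserves the existing time range). This contradicts $T \le T - 1$ and completes the proof.

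The principal obstacle I anticipate is justifying the inductive step $\tilde{w} \in \mathrm{int}(P)$: although the bounded support-point displacement and the edge structure of $P$ both point in the right direction, verifying that the horizontal cross-section of $P$ at time $s^* + 1$ reaches as far as $\tilde{w}$ requires a careful geometric argument ruling out the possibility that $P$ pinches away from $(j^*, s^* + 1)$ despite $w^*$ lying interior to $P$ at time $s^*$. Should this step prove awkward, a fallback is to work directly with the horizontal cross-section of $P$ through $w$ at height $t$: find the bounding vertices $v_L, v_R \in X$ on either side of $w$, use Lemma~\ref{lem:VertexDisjoint} to bound their separation from $w$, and propagate a contradiction to Lemma~\ref{lem:VertexDisjoint} at heights $t \pm 1$ via the neighbors of $w$ in the structure of $P'$ (its parents, support, or error predecessor, according to its type).
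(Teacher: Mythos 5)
Your proof attempt shares the paper's opening moves (assume $w \in X' \cap P$, use Lemma~\ref{lem:BorderDisjoint} to conclude $P' \subset P$, and then exploit the support-point hierarchy), but the crucial divergence is where the gap lies. You try to \emph{prove} that the parent $\tilde w$ of the topmost vertex $w^* \in X'$ lies in $\mathrm{int}(P)$, and then iterate upward in time and downward in level until forcing $(0,T) \in P$. You explicitly flag the step $\tilde w \in \mathrm{int}(P)$ as the ``principal obstacle,'' and indeed neither the slope bounds on the edges of $P$, nor Lemma~\ref{lem:VertexDisjoint}, nor the fact that $w^* \in \mathrm{int}(P)$ are enough to rule out $P$ pinching off between times $s^*$ and $s^*+1$ in a way that leaves $\tilde w$ outside. (Note that even though the topmost vertex of $X$ must sit at integer time $\geq s^*+1$, that vertex could lie far from $j^*$, and the cross-section of $P$ at height $s^*+1$ near $j^*$ need not be nonempty.) So the inductive step is unjustified as written.

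The paper sidesteps this entirely by a small but decisive twist: it takes $n$ to be the \emph{minimal} level among $\{n_0,\dots,\ell-1\}$ admitting a vertex of some level-$n$ polygon inside $P$. Then for any $v \in C_n \cap P'$, the parent $u$ of $v$ lies at a strictly lower level $m < n$, so by minimality $u \notin P$ --- the opposite of what you try to establish. Now $v \in P$ and $u \notin P$ forces the short segment $\overline{vu}$ to cross $\partial P$, giving consecutive $w_1, w_2 \in X$ whose edge it crosses; a displacement computation shows $|i'-j| \leq (n+1)r \leq \ell r$, so $w_1 \in X$ is a candidate support point of $u$ at the \emph{higher} level $\ell$, contradicting the maximality built into the choice of $\mathrm{supp}(u)$. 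In short: where you try to push the parent inside $P$ (hard, and not generally true), the paper guarantees it sits outside $P$ by minimality of $n$ and then contradicts the level-maximizing choice of support point. If you adopt that minimality assumption, the need for the iterative chain disappears and the argument closes in one step; as it stands, your proposal has a genuine gap.
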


\begin{proof}
  Suppose on the contrary that some $w \in X'$ satisfies $w \in P$.
  We may assume that $n$ is the minimal element of $\{n_0, \ldots, \ell-1\}$ that allows this.
  We have $w \in P \cap \partial P' \neq \emptyset$, so Lemma~\ref{lem:BorderDisjoint} implies $P' \subset P$.
  Let $v = (i,t) \in C_n \cap P'$ be arbitrary.
  Since $P' \subset P$, there exists a vertex of $X$ with the same y-coordinate as $v$.
  As $(0, T)$ is the only vertex of $C_{n_0}$ and the only vertex with y-coordinate equal to $T$, we have $v \neq (0, T)$ and $n > n_0$.
  Then there exists $m < n$, a polygon $\tuple{\hat P, \hat X} \in Q(C_m)$ and a vertex $u = (i', t+1) \in \hat X$ with $v = \mathrm{supp}(u)$.
  By the minimality of $n$, we have $u \notin P$.
  Then the line segment $I = \overline{v u}$ intersects $\partial P$, so there are consecutive elements $w_1, w_2 \in X$ such that the segment $\overline{w_1 w_2}$ intersects $I$.
  See Figure~\ref{fig:NotInside}.
  
  As in the proof of Lemma~\ref{lem:BorderDisjoint}, we can assume $w_1 = (j, t)$ and $w_2 = (j', t+1)$ with $i \neq i'$ and $j \neq j'$, and we have either $i < j$ and $i' > j'$, or $i > j$ and $i' < j'$.
  We also have $|i - i'| \leq n r$ and $|j - j'| \leq r$, and a simple calculation shows $|i' - j| \leq (n+1)r \leq \ell r$.
  Thus $w_1$ is also a potential support point for $u$ with a higher level, which contradicts $v = \mathrm{supp}(u)$.
\end{proof}

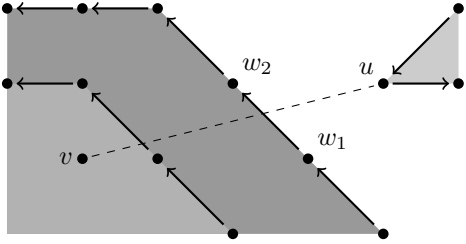
\begin{figure}[ht]
  \begin{center}
    \begin{tikzpicture}

      \coordinate (v1) at (0,0); 
      
      \coordinate (v2) at (2,-1);
      \coordinate (v3) at (1,0);
      \coordinate (v4) at (0,1);
      \coordinate (v5) at (-1,1);

      \coordinate (v6) at (4,-1);
      \coordinate (v7) at (3,0); 
      \coordinate (v8) at (2,1); 
      \coordinate (v9) at (1,2);
      \coordinate (v10) at (0,2);
      \coordinate (v11) at (-1,2);
      
      \coordinate (v12) at (5,2);
      \coordinate (v13) at (4,1); 
      \coordinate (v14) at (5,1);

      \fill [black!40] (v6) -- (v9) -- (v11) -- (-1,-1);
      \fill [black!30] (v2) -- (v4) -- (v5) -- (-1,-1);
      \fill [black!20] (v12) -- (v13) -- (v14);

      \foreach \i in {1,...,14}{
        \fill (v\i) circle (0.07cm);
        \node (n\i) at (v\i) {};
      }

      \foreach \a/\b in {2/3,3/4,4/5,6/7,7/8,8/9,9/10,10/11,12/13,13/14}{
        \draw [thick,->] (n\a) -- (n\b);
      }
      \draw [dashed] (n1) -- (n13);

      \node [left] at (v1) {$v$};
      \node [above left] at (v13) {$u$};
      \node [above right] at (v7) {$w_1$};
      \node [above right] at (v8) {$w_2$};
      
    \end{tikzpicture}
  \end{center}
  \caption{The vertices in the proof of Lemma~\ref{lem:NotInside}.}
  \label{fig:NotInside}
\end{figure}

Next, we show that every element of each set of support points $C_\ell$ is close to the border of the polygon that contains it.
We will use this fact to show that the total perimeter of all polygons in $Q(C_\ell)$ is bounded from below by a constant multiple of $|C_\ell|$.

\begin{lemma}
  \label{lem:CloseToBorder}
  Let $n_0 \leq \ell \leq k$, let $w = (i, t) \in C_\ell$, and let $\tuple{P, X} \in Q(C_\ell)$ be the space-time polygon containing $w$.
  Then there exists $w' = (i', t') \in X$ such that $|i - i'| \leq \ell r$ and $|t - s| \leq 1$.
\end{lemma}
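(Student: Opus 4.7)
The plan is to exploit Lemma~\ref{lem:NotInside}: since $w \in C_\ell$ arises as the support point of some vertex of a lower-level polygon, that supporting vertex lies outside $P$, and tracing the segment from $w$ to it yields a crossing of $\partial P$ near which a suitable $w'$ can be found.

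Unwinding the definition of $C_\ell$ gives some $p < \ell$, a polygon $\tuple{P_u, X_u} \in Q(C_p)$, and a vertex $u = (i_u, t+1) \in X_u$ with $w = \mathrm{supp}(u)$ at level $\ell$, so $|i_u - i| \leq \ell r$. Lemma~\ref{lem:NotInside} gives $u \notin P$, hence the segment $\gamma(s) = (i + s(i_u - i),\, t + s)$, $s \in [0,1]$, leaves $P$. Let $s_0 \in [0, 1]$ be minimal with $q := \gamma(s_0) \in \partial P$.

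The bulk of the argument handles $s_0 \in (0, 1)$ with $q$ in the interior of an edge $e \subset \partial P$. Since $q_y = t + s_0 \in (t, t+1)$, the edge $e$ cannot be horizontal, so $e$ is a translate of an element of $\Delta^L_\ell \cup \Delta^R_\ell$, with endpoints $v_L = (v_{L,x}, t)$ and $v_R = (v_{R,x}, t+1)$ in $X$ satisfying $|v_{R,x} - v_{L,x}| \leq r$. Parametrizing $e$ linearly by the $y$-coordinate shows that $q$ corresponds to the same parameter $s_0$ on $e$, so $|v_{R,x} - q_x| \leq (1 - s_0) r$ and $|q_x - i| \leq s_0 \ell r$. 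Adding and using $\ell \geq 1$ gives
\[
|v_{R,x} - i| \leq (1 - s_0) r + s_0 \ell r \leq \ell r,
\]
so $w' = v_R$ works with $|t - t'| = 1$.

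The remaining cases are short: if $q \in X$ (for example, $q$ is a vertex of $\partial P$, or $s_0 = 0$ and $w \in X$), take $w' = q$; the bounds follow directly from $|q_x - i| \leq s_0 \ell r$. If $s_0 = 0$ and $w \notin X$, then $w$ lies in the interior of a horizontal edge of $\partial P$ at time $t$, whose endpoints (both in $X$) are within horizontal distance $r$ of $w$. The case $s_0 = 1$ would place $u \in \partial P \subseteq P$, contradicting $u \notin P$. The main subtlety is choosing $w' = v_R$, the \emph{upper} endpoint of the crossing edge: the factor $(1 - s_0) r$ from the edge length is exactly balanced by the complementary $s_0$ on the segment, so the bound telescopes to $\ell r$ instead of the weaker $(\ell + 1) r$ one would get from $v_L$.
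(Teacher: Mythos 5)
Your proof takes essentially the same route as the paper's: both invoke Lemma~\ref{lem:NotInside} to place the supporting vertex $u$ outside $P$, and both extract $w'$ from the crossing of $\overline{w u}$ with $\partial P$. Where the paper simply asserts that ``one endpoint $w'\in X$ of that segment satisfies $|i-i'|\leq \ell r$,'' you correctly identify that it is the \emph{upper} endpoint $v_R$ that must be taken and verify the telescoping estimate $(1-s_0)r + s_0\ell r \leq \ell r$; this fills in a step the paper glosses over, and the observation that the lower endpoint only gives $(\ell+1)r$ is a real subtlety.

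Two gaps to note. First, your argument does not treat the base case $\ell = n_0$: there is no $p < n_0$ and no support-point relation to unwind, since $C_{n_0} = \{(0,T)\}$ is defined directly, not via $\mathrm{supp}$. The paper handles this separately by noting that $(0,T)$ is the unique coordinate at the maximal time $T$, hence must be a vertex of $X$, so $w' = w$ suffices. You should add this case, since Lemma~\ref{lem:LinearBound} applies the present lemma with $\ell = n_0$. Second, in the subcase $s_0 = 0$ with $w$ interior to a horizontal edge, you claim both endpoints lie within horizontal distance $r$ of $w$; horizontal edges may have length up to $2r$ (they come from $\Delta^C_\ell$ or $\Delta^B_\ell$), so only one endpoint is guaranteed within $r$. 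The conclusion survives because $\ell > n_0 \geq 1$ forces $\ell \geq 2$, so even $2r \leq \ell r$ suffices, but the claim as written is not quite right.
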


\begin{proof}
  The case of $\ell = n_0$ is clear since $(0, T)$ is the only vertex of $C_{n_0}$ and the only vertex with y-coordinate at least $T$.
  Suppose thus that $\ell > n_0$.
  Since $w \in C_\ell$, there exists $n < \ell$, a polygon $\tuple{P', X'} \in Q(C_n)$ and a vertex $v = (j, t+1) \in X'$ such that $\eta^{t+1}_j \in S_n$ and $\mathrm{supp}(v) = w$.
  In particular, we have $|i - j| \leq \ell r$.
  Lemma~\ref{lem:NotInside} implies $v \notin P$.
  Then the line segment $\overline{w v}$, which is of length at most $\ell r + 1$, intersects some segment of the border $\partial P$.
  One endpoint $w' = (i', t') \in X$ of that segment satisfies $|i - i'| \leq \ell r$ and $|t - t'| \leq 1$, and the claim holds.
\end{proof}

We now show that out of all vertices in the system $(Q(C_n))_{n=n_0}^k$, a positive fraction have type 2.
Recall that we showed already in Lemma~\ref{lem:BorderSize} that for each individual space-time polygon, a positive fraction of its vertices have type 1 or 2.
The idea is that since type 1 vertices have support points, they give rise to new polygons of higher levels, and the polygons of level $k$, the maximum, have no type 1 vertices.

\begin{lemma}
  \label{lem:LinearBound}
  There exists $\beta > 0$, depending only on the automaton $f$, with the following property.
  Let $\mathcal{X} = \bigcup_{n = n_0}^k \bigcup_{\tuple{P,X} \in Q(C_n)} X$ be the set of all border vertices in the system of polygons starting from the initial vertex $C_{n_0} = \{(0, T)\}$.
  Then $|\{ w \in \mathcal{X} \;|\; \mbox{$w$ has type 2} \}| \geq \beta |\mathcal{X}|$.
\end{lemma}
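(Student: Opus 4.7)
Let me write $\cX_n = \bigcup_{\tuple{P,X} \in Q(C_n)} X$, $T_n = |\cX_n|$, and let $T_n^j$ ($j \in \{1,2,3\}$) be the number of vertices of type $j$ in $\cX_n$, so that $\cX = \bigcup_n \cX_n$ and $T_n = T_n^1 + T_n^2 + T_n^3$. Summing Lemma~\ref{lem:BorderSize} over the polygons in $Q(C_n)$ gives $T_n^1 + T_n^2 \geq \delta T_n$ at every level, where $\delta = \min_n \delta_n > 0$. Thus at each level, either $T_n^2$ is already a fixed fraction of $T_n$, or the type-$1$ count is. The plan is to show that a level where type-$1$ dominates is paid for by vertices at strictly higher levels, setting up a downward recurrence that terminates at the top level $n = k-1$, where no type-$1$ vertices can exist (since no $\tilde n > k-1$ is available as a destination for support points).

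The central estimate is
\[
  T_n^1 \leq K_0 \sum_{\tilde n > n} T_{\tilde n}, \qquad K_0 = 3(2kr + 1)^2,
\]
which I establish in two bounds. A type-$1$ vertex $w = (i,t) \in \cX_n$ has a support point $\mathrm{supp}(w) = (j, t-1) \in C_{\tilde n}$ at some $\tilde n > n$ with $|i - j| \leq \tilde n r$. Because the polygons in $Q(C_n)$ are pairwise disjoint, each position at level $n$ appears at most once in $\cX_n$, so at most $2\tilde n r + 1$ type-$1$ vertices at level $n$ can share the same support point; summing gives $T_n^1 \leq \sum_{\tilde n > n}(2\tilde n r + 1)\,|C_{\tilde n}|$. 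Lemma~\ref{lem:CloseToBorder} assigns to every $v \in C_{\tilde n}$ a boundary vertex of its containing polygon within horizontal distance $\tilde n r$ and vertical distance $1$; at most $3(2\tilde n r + 1)$ elements of $C_{\tilde n}$ can be assigned to the same boundary vertex, so $|C_{\tilde n}| \leq 3(2\tilde n r + 1)\,T_{\tilde n}$. Substituting and using $\tilde n \leq k$ yields the claim.

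Now case-split at each level. Either $T_n^2 \geq (\delta/2)T_n$, giving $T_n \leq (2/\delta) T_n^2$; or $T_n^1 \geq (\delta/2)T_n$, which combined with the central estimate gives $T_n \leq (2K_0/\delta) \sum_{\tilde n > n} T_{\tilde n}$. Setting $C = 2\max(1, K_0)/\delta$ and $T_{\geq n} = \sum_{\tilde n \geq n} T_{\tilde n}$, in both cases we obtain $T_n \leq C\bigl(T_n^2 + T_{\geq n+1}\bigr)$, which rearranges into the recurrence
\[
  T_{\geq n} \leq C T_n^2 + (C+1)\,T_{\geq n+1}
\]
with base case $T_{\geq k-1} \leq C T_{k-1}^2$. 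Iterating downward from $n = k-1$ to $n = n_0 \geq 1$ unrolls to
\[
  |\cX| = T_{\geq n_0} \leq \sum_{m = n_0}^{k-1} C(C+1)^{m-n_0}\,T_m^2 \leq C(C+1)^{k-2} \sum_n T_n^2,
\]
so the lemma follows with $\beta = \bigl(C(C+1)^{k-2}\bigr)^{-1}$, which depends only on $f$. The main delicate point is the central estimate: it leans simultaneously on disjointness of polygons within a single level (capping the number of type-$1$ preimages of a support point) and on Lemma~\ref{lem:CloseToBorder} (controlling $|C_{\tilde n}|$ in terms of $T_{\tilde n}$). Without either ingredient the recurrence would not close, since we would not be able to charge type-$1$ counts at level $n$ to the vertex counts at higher levels.
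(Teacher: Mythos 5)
Your proof is correct, and it takes a genuinely different (and arguably more transparent) route than the paper's published argument, while relying on exactly the same two structural ingredients: Lemma~\ref{lem:BorderSize} for the per-polygon bound $T_n^1 + T_n^2 \geq \delta T_n$, and Lemma~\ref{lem:CloseToBorder} together with the support-point charging to control $T_n^1$ by $\sum_{\tilde n > n} T_{\tilde n}$. The paper instead runs a weight-redistribution process: it initializes each level with weight equal to its type-2 count, transfers small amounts of weight $\beta_\ell$ downward along the support-point relation, and maintains two invariants showing that the final per-level weight is at least $\beta_{n_0}|\mathcal{X}_\ell|$; summing recovers the claim. Your downward recurrence $T_{\geq n} \leq C\,T_n^2 + (C+1)\,T_{\geq n+1}$ with base case at the top level reaches the same conclusion with less bookkeeping — you don't have to track a moving distribution, only a scalar tail sum — at the cost of an explicit $\beta \sim (C+1)^{-(k-2)}$, which is of the same exponentially-small order as the paper's $\beta_{n_0} \sim \prod_n \delta_n / (2rk+2)^{k}$. (In fact, the paper's source contains a commented-out alternative proof that establishes precisely the inequality $\sum_{\ell \geq n}|\mathcal{X}^2_\ell| \geq \alpha_n \sum_{\ell \geq n}|\mathcal{X}_\ell|$ by a very similar recurrence; your argument is close in spirit to that version.) Two cosmetic remarks: your count $3(2\tilde n r + 1)$ for $|C_{\tilde n}|/T_{\tilde n}$ is actually the careful bound — Lemma~\ref{lem:CloseToBorder} gives $|t-t'| \leq 1$, not equality of $y$-coordinates as the paper's proof sloppily states — and the index of the ``top'' level ($k$ versus $k-1$) is an off-by-one ambiguity already present in the paper's indexing of the $S_n$; your choice is consistent with the definitions and doesn't affect the argument.
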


\begin{proof} 
  For $n_0 \leq n \leq k$, denote $\mathcal{X}^a_n = \{ (i, t) \in \mathcal{X} \;|\; \mbox{$(i,t)$ has type $a$ and $\eta^t_i \in S_n$} \}$ and $\mathcal{X}_n = \bigcup_{a \in \{1,2,3\}} \mathcal{X}^a_n$.
  We define a process where each set $\mathcal{X}_n$ is given a non-negative weight $D(n)$, and the weights are iteratively re-distributed in a way that preserves their sum, which is precisely the number of vertices of type 2.
  We prove a positive lower bound on $D(n) / |\mathcal{X}_n|$ for each $n$ at the end of the process, which implies the claim of the Lemma.
  
  More formally, for all $n_0 \leq \ell \leq k+1$, we define a weight distribution $D_\ell : \{n_0, \ldots, k\} \to \R$ with $\sum_{n = n_0}^k D_\ell(n) = |\{ w \in \mathcal{X} \;|\; \mbox{$w$ has type 2} \}|$.
  The initial distribution $D_{k+1}$ is defined by $D_{k+1}(n) = |\mathcal{X}^2_n|$.
  Denote $\beta_{k+1} = 1$ and $\beta_n = \delta_n \beta_{n+1} / (2 r k + 2) < \beta_{n+1}$ for $n \geq k$, where $\delta_n > 0$ is given by Lemma~\ref{lem:BorderSize}.
  In the course of the construction, we maintain the following invariants:
  \begin{itemize}
  \item
    For each $n_0 \leq \ell \leq n \leq k$, we have $D_\ell(n) \geq \beta_n |\mathcal{X}_n|$.
  \item
    For each $n_0 < \ell \leq k + 1$, we have $D_\ell(\ell-1) \geq \delta_{\ell-1} \beta_\ell |\mathcal{X}_{\ell-1}|$.
  \end{itemize}
  
  For $\ell = k, k-1, \ldots, n_0$, we compute the new distribution $D_\ell$ from $D_{\ell+1}$ as follows.
  For each $p < \ell$ and each pair $(v, w) \in \mathcal{X}_p \times C_\ell$ such that $w$ is the support point of $v$, we transfer $\beta_\ell$ units of weight from $\mathcal{X}_\ell$ to $\mathcal{X}_p$.
  The resulting weight distribution is $D_\ell$.
  
  We now show that the invariants hold for $D_\ell$, and for the first one, let $n_0 \leq \ell \leq n \leq k$.
  If $\ell < n$, then $D_\ell(n) = D_{\ell+1}(n) \geq \beta_n |\mathcal{X}_n|$ by the induction hypothesis and the fact that $\mathcal{X}_n$ retains its weight in the construction of $D_\ell$.
  Suppose then that $\ell = n$.
  By Lemma~\ref{lem:CloseToBorder}, each $w \in C_\ell$ is within distance $r k$ from some vertex of $\mathcal{X}_\ell$ with the same y-coordinate, implying $|C_\ell| \leq (2 k r + 2) |\mathcal{X}_\ell|$.
  Since each such $w$ can be the support point for at most $2 r k + 1$ other vertices, at most $\beta_\ell (2 r k + 2)(2 r k + 1) = \delta_\ell \beta_{n+1} \frac{2 r k + 1}{2 r k + 2}$ units of weight were transferred from $\mathcal{X}_\ell$ via $C_\ell$.
  On the other hand, we have $D_{\ell+1}(\ell) \geq \delta_\ell \beta_{\ell+1} |\mathcal{X}_\ell|$ by the induction hypothesis.
  This implies
  \[
    D_\ell(\ell) \geq D_{\ell+1}(\ell) - |\mathcal{X}_\ell| \delta_\ell \beta_{\ell+1} \frac{2 r k + 1}{2 r k + 2} \geq \beta_\ell |\mathcal{X}_\ell|
  \]
  
  For the second invariant, let $n_0 < \ell \leq k+1$ and consider the weight $D_\ell(\ell-1)$.
  In the case $\ell = k+1$, we have $D_{k+1}(k) = |\mathcal{X}^2_k| \geq \delta_k \beta_{k+1} |\mathcal{X}_k|$ by Lemma~\ref{lem:BorderSize}, since $\mathcal{X}^1_k$ is empty and $\beta_{k+1} = 1$ by definition.
  Suppose then that $\ell \leq k$.
  For each type-1 vertex $w \in \mathcal{X}_{\ell-1}$ with support point of level $n \geq \ell$, exactly $\beta_n \geq \beta_\ell$ units of weight have been transferred to $\mathcal{X}_{\ell-1}$.
  In addition, it has the initial weight of $|\mathcal{X}^2_{\ell-1}|$ given by the type-2 vertices.
  Since $|\mathcal{X}^1_{\ell-1}| + |\mathcal{X}^2_{\ell-1}| \geq \delta_{\ell-1} |\mathcal{X}_{\ell-1}|$ by Lemma~\ref{lem:BorderSize}, this implies
  \[
    D_\ell(\ell-1) \geq \beta_\ell |\mathcal{X}^1_{\ell-1}| + |\mathcal{X}^2_{\ell-1}| > \delta_{\ell-1} \beta_\ell |\mathcal{X}_{\ell-1}|
  \]
  which is what we wanted to prove.
  
  All in all, we have shown that $D_{n_0}(\ell) \geq \beta_{n_0} |\mathcal{X}_\ell|$ holds for all $n_0 \leq \ell \leq k$.
  Since $\beta_{n_0}$ depends only on the automaton $f$, we can choose $\beta = \beta_{n_0}$, and the proof is complete.
\end{proof}

Next, we estimate the number of different space-time polygon systems rooted at the coordinate $(0, T)$ having a certain perimeter size $|\mathcal{X}| = H$.
Knowing this size, we can characterize the entire system $(Q(C_n))_{n = n_0}^k$ as follows.
First, for each polygon $\tuple{P, X} \in Q(C_n)$ in the system, there exists a vertex $w = (i,t) \in X \cap C_n$ (for example, one that maximizes $t$).
If $n > n_0$, there also exists $\ell < n$, another polygon $\tuple{P', X'} \in Q(C_\ell)$, and a vertex $w' = (j, t+1) \in X'$ such that $w = \mathrm{supp}(w')$ and $|i - j| \leq k r$.
We fix one such pair $(w, w')$ and call it the \emph{base} of $\tuple{P, X}$.
Now, we construct a list $L$ of vertices as follows.
We begin with the polygon $\tuple{P, X}$ containing the topmost vertex $w_1 = (0, T)$, and set $L_1 = X = \tuple{w_1, \ldots, w_{|X|}}$.

Suppose now that we have constructed a list $L_p$ for some $p \geq 1$.
If there is a polygon $\tuple{P', X'}$ that has not been processed yet, and its base $(w', w)$ has a vertex $w \in L_p$, then we replace $w$ in $L_p$ by the list of vertices $\tuple{w, w', w'_1, \ldots, w'_q, w', w}$ where $X' = \tuple{w', w'_1, \ldots, w'_q}$, and denote the resulting list by $L_{p+1}$.
If such a polygon does not exist, then the list $L_p = L = \tuple{w_1, \ldots, w_s}$ contains every vertex in $\mathcal{X}$ at least once and at most $2 k r$ times, so that $|L| \leq 2 k r H$.
Furthermore, for each index $i \in \{1, \ldots, s\}$ we have $w_{i+1} - w_i \in \{-k r, \ldots, k r\} \times \{-1, 0, 1\}$.
Since the list $L$, together with the mapping $L \to \{1, \ldots, k\}$ that sends each vertex $(i, t)$ to the number $n$ such that $\eta^t_i \in S_n$, characterizes $\mathcal{X}$ completely, the number of different systems $(Q(C_n))_{n = n_0}^k$ with $|\mathcal{X}| = H$ is at most $\sum_{s = H}^{2 k r H} (3 k (2 k r + 1))^s \leq (3 k (2 k r + 1))^{(2 k r + 1) H}$.

Now, if we have $\eta^T_0 > 0$, then there exists a system $\mathcal{P} = (Q(C_n))_{n = 1}^k$ of time-space polygons of some size $H > 0$ rooted at $(0, T)$; we denote this event by $W(\mathcal{P})$.
By Lemma~\ref{lem:LinearBound}, such a system contains at least $\beta H$ vertices of type 2, which correspond to a subset of the random error set $E$.
Since $\eta$ is generated by an $\epsilon$-perturbation of the cellular automaton $f$, the probability of $W(\mathcal{P})$ is at most $\epsilon^{\beta H}$.
Thus we have
\begin{align*}
  \prob[ \eta^T_0 > 0 ]
& \leq
  \sum_{H = 1}^\infty \sum_{\mathcal{P}} \prob[ W(\mathcal{P}) ]
\leq
  \sum_{H = 1}^\infty (3 k (2 k r + 1))^{(2 k r + 1) H} \epsilon^{\beta H} \\
& =
  \sum_{H = 1}^\infty \left( \frac{(3 k (2 k r + 1))^{2 k r + 1}}{\epsilon^\beta} \right)^H
\stackrel{\epsilon \to 0}{\longrightarrow}
  0
\end{align*}
since we have $\epsilon^\beta < (3 k (2 k r + 1))^{2 k r + 1}$ for all small enough $\epsilon$.
Then $f$ is a stable eroder, and we have proved the first half of Theorem~\ref{thm:StableEroder}.



\section{Stability Condition is Necessary}
\label{sec:Converse}

In this section, we prove the second half of Theorem~\ref{thm:StableEroder}: every one-dimensional stable eroder satisfies the stability condition.
The idea of the proof is the following.
We assume that a CA $f$ does not satisfy the stability condition, and our goal is to show that for all $\epsilon > 0$, there exists a finite island that $f$ almost surely never erodes, if each coordinate of the trajectory contains an error with probability of $\epsilon$ independently of the others.
Intuitively, the random errors extend the borders of the island faster than the automaton can erode it.
We begin by proving an alternative formulation of the stability condition for one-dimensional automata.

\begin{lemma}
  \label{lem:AltStability}
  The automaton $f$ satisfies the stability condition if and only if for all quiescent $a \in S \setminus \{0\}$ there exists quiescent $b < a$ such that $L_{b,a} > R_{a,b}$.
\end{lemma}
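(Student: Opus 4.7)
The plan is to use Lemma~\ref{lem:ForcingRates2} to rephrase both conditions purely in terms of the inequality $L_{b,a} > R_{a,b}$ (equivalently, $f$ being $b,a$-shrinking), and then to handle each direction by a short combinatorial argument on quiescent states.

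For the forward direction, I will suppose that the stability condition holds via some chain $0 = a_1 < a_2 < \cdots < a_k = m$, and for an arbitrary quiescent $a \in S \setminus \{0\}$ I will produce a witness $b < a$. My choice will be $b = a_i$, where $i$ is the largest index with $a_i < a$; since $a > 0 = a_1$ and $a \leq m = a_k$, such an $i$ exists, lies in $\{1, \ldots, k-1\}$, and satisfies $a_i < a \leq a_{i+1}$. The hypothesis gives $L_{a_i, a_{i+1}} > R_{a_{i+1}, a_i}$, so it remains to check the sandwich inequalities $L_{a_i, a} \geq L_{a_i, a_{i+1}}$ and $R_{a, a_i} \leq R_{a_{i+1}, a_i}$. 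Both of these I would verify by pointwise comparison of step configurations and monotonicity of $f$: since $a \leq a_{i+1}$, the step of type $a_i, a$ is majorized by the step of type $a_i, a_{i+1}$, and any coordinate at which $f^t$ of the latter equals the quiescent state $a_i$ must satisfy this for the former as well (both evolutions are bounded below by $\uniform{a_i}$ by monotonicity). A symmetric comparison on the right handles the $R$ inequality, and combining the three statements yields $L_{a_i, a} > R_{a, a_i}$.

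For the backward direction, I will construct the chain top down. Set $a_k = m$; then, as long as the current last term $a_j$ is nonzero, apply the hypothesis to $a_j$ to obtain a quiescent $a_{j-1} < a_j$ with $L_{a_{j-1}, a_j} > R_{a_j, a_{j-1}}$. The sequence $a_k > a_{k-1} > \cdots$ is strictly decreasing inside the finite set of quiescent states, so it must terminate; the hypothesis permits the recursion to continue as long as the current term is nonzero, so termination occurs precisely when some $a_j$ equals $0$. Reindexing the resulting finite sequence bottom up gives the desired chain.

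The only step requiring genuine thought is the monotonicity $L_{a_i, a} \geq L_{a_i, a_{i+1}}$ and its symmetric counterpart in the forward direction; this is slightly more general than Lemma~\ref{lem:MonotoneLR} (arbitrary quiescent jumps in place of single increments), but it follows by the same pointwise comparison of step configurations, so it is not a serious obstacle. The rest of the proof is bookkeeping about indices of quiescent states.
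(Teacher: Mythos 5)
Your proposal is correct and follows essentially the same approach as the paper's proof: in the forward direction, choose $b = a_i$ where $a_i < a \leq a_{i+1}$ and sandwich $L_{a_i, a} \geq L_{a_i, a_{i+1}} > R_{a_{i+1}, a_i} \geq R_{a, a_i}$ via the monotonicity of the Gal'perin rates; in the backward direction, build the chain top-down from $m$ until hitting $0$. One small merit of your write-up: you explicitly flag that the comparison $L_{a_i, a} \geq L_{a_i, a_{i+1}}$ needs a ``quiescent-jump'' version of Lemma~\ref{lem:MonotoneLR} rather than iterating the single-increment statement \eqref{eq:LIneq} (whose intermediate states need not be quiescent), whereas the paper cites \eqref{eq:LIneq} without comment; your observation that the same step-configuration comparison proves the general case is exactly the right fix.
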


\begin{proof}
  Suppose that $f$ satisfies the latter condition.
  Denote $a_1 = m$.
  Then there exists quiescent $a_2 < a_1$ such that $L_{a_2,a_1} > R_{a_1,a_2}$.
  Iterating this argument at most $|S|-1$ times, we find a sequence $(a_i)_{i=1}^k$ of quiescent states such that $m = a_1 > a_2 > \cdots > a_k = 0$ and $L_{a_{i+1},a_i} > R_{a_i,a_{i+1}}$ for all $i \in \{1, \ldots,  k-1\}$.
  This is precisely the stability condition.
  
  Suppose then that $f$ satisfies the stability condition with the sequence $0 = a_1 < \cdots < a_k = m$, and let $a \in S$ be quiescent.
  There is a unique $i \in \{1, \ldots, k-1\}$ such that $a_i < a \leq a_{i+1}$.
  By the definition of $a_i$ and $a_{i+1}$, we have $L_{a_i,a_{i+1}} > R_{a_{i+1},a_i}$.
  From \eqref{eq:LIneq} we deduce $L_{a_i,a} \geq L_{a_1,a_{i+1}}$, and \eqref{eq:RIneq} gives $R_{a_{i+1},a_i} \geq R_{a,a_i}$.
  Thus we have $L_{a_i,a} > R_{a,a_i}$, which is what we wanted to show.
\end{proof}

Now, we define a special kind of $\epsilon$-perturbation of a cellular automaton: one where errors occur independently with probability $\epsilon$, and always produce a fixed state $a \in S$ unless it would cause the state to decrease.
Our goal is to prove that if a CA fails to satisfy the condition of Lemma~\ref{lem:AltStability}, then these perturbations form a counterexample to it being a stable eroder.

\begin{definition}
  Let $f$ be a cellular automaton on $S^\Z$, let $a \in S$, and let $\epsilon > 0$.
  The \emph{independent $a,\epsilon$-perturbation} of $f$ is the stochastic symbolic process $R$ defined as follows.
  Let $E \subset \Z \times \N$ be a random set, where each $v \in \Z \times \N$ belongs to $E$ independently with probability $\epsilon$.
  Then $R(x) = R_E(x)$ is defined by
  \[
    R(x)^{t+1}_i =
    \left\{
      \begin{array}{ll}
        \max(a, f(R(x)^t)_i), & \mbox{if~} (i, t) \in E. \\
        f(R(x)^t)_i, & \mbox{if~} (i, t) \notin E.
      \end{array}
    \right.
  \]
  The set $E$ is called the \emph{underlying error set}.
\end{definition}

Note that $R(x)^{t+1} \geq f(R(x)^t)$ holds for all $x \in S^\Z$ and $t \in \N$, since an error can only increase the value of a coordinate.

The following result is a stronger version of Proposition~\ref{prop:StableIsEroder}.
It tells us that in order to prove that a CA $f$ is not a stable eroder, it suffices to find a finite island that any given independent perturbation of $f$ never erodes away with arbitrarily high probability.
Note that we fix a sequence of coordinates that the evolving island must contain.
Also, we cannot require that an island survives forever with probability 1, since $f$ might be an eroder, and there is a small but positive probability that no errors occur in the vicinity of the island before it is eroded away.

\begin{lemma}[Proposition~3 of \cite{To76}]
  \label{lem:ImmortalIsland}
  Let $f$ be a one-dimensional monotonic CA, let $a \in S$, and let $R^a_\epsilon$ be the independent $a,\epsilon$-perturbation of $f$.
  Suppose that for all $N \in \N$, there exists a $0$-island $x^N \in S^\Z$ with $x^N \leq \uniform{a}$ and a sequence of coordinates $(i^N_t)_{t \in \N}$ such that for all $\epsilon > 0$, we have
  \[
    \inf_{t \in \N} \prob [R^a_\epsilon(x^N)^t_{i^N_t} = a] \stackrel{N \to \infty}{\longrightarrow} 1
  \]
  Then $f$ is not a stable eroder.
\end{lemma}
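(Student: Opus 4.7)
The plan is to prove the contrapositive directly by taking the concrete $\epsilon$-perturbation in the definition of stable eroder to be $R^a_\epsilon$ itself, adapting the scheme used in Proposition~\ref{prop:StableIsEroder}. At each intermediate time slice I will ``plant'' a shifted copy of $x^N$ via a fresh batch of errors and let the perturbation evolve it forward. The main obstacle, absent from Proposition~\ref{prop:StableIsEroder}, is that the hypothesis only guarantees the planted island survives at the designated coordinate $i^N_t$ with high \emph{probability} rather than deterministically, and the stochastic survival events for distinct planting times share error randomness at the later time slices, so a naive union bound fails. I plan to decorrelate them with a second-moment argument in which only the planting events, not the survival events, need be independent.

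Fix $\epsilon > 0$ and set $\eta = R^a_\epsilon(\uniform{0})$, $V = \{v : x^N_v > 0\}$, $k = |V|$, $\delta = \epsilon^k$. For each $T \geq 1$ and $i \in \{0, \ldots, T-1\}$, let the planting event $\tilde A_i$ be ``the underlying error set contains $\{(i^N_i + v, T - i - 1) : v \in V\}$''. Since these events involve disjoint time slices, they are mutually independent with $\prob[\tilde A_i] = \delta$. On $\tilde A_i$, the bound $x^N \leq \uniform{a}$ together with the definition of $R^a_\epsilon$ forces $\eta^{T - i} \geq \sigma^{i^N_i} x^N$. Couple $\eta$ with the process $\hat\eta^{(i)}$ that reuses the underlying error set but is restarted from $\sigma^{i^N_i} x^N$ at absolute time $T - i$; monotonicity of $f$ and of $u \mapsto \max(a, u)$ yields monotonicity of $R^a_\epsilon$ in the initial configuration for fixed errors, so $\eta^T_0 \geq \hat\eta^{(i), T}_0$ on $\tilde A_i$. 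Since $\hat\eta^{(i), T}_0$ is measurable with respect to the errors in the time slices $[T - i, T - 1]$, it is independent of $\tilde A_i$, and shift-equivariance of $R^a_\epsilon$ identifies its distribution with that of $R^a_\epsilon(x^N)^i_{i^N_i}$, to which the hypothesis applies directly.

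Letting $\eta_N = 1 - \inf_t \prob[R^a_\epsilon(x^N)^t_{i^N_t} = a]$ (so $\eta_N \to 0$ as $N \to \infty$) and $D_i = \tilde A_i \cap \{\hat\eta^{(i), T}_0 = a\} \subset \{\eta^T_0 \geq a\}$, this gives $\prob[D_i] \geq \delta(1 - \eta_N)$, while $\prob[D_i \cap D_j] \leq \prob[\tilde A_i \cap \tilde A_j] = \delta^2$ for $i \neq j$ by independence of the planting events. The Paley--Zygmund inequality applied to $X = \sum_{i < T} \mathbf{1}_{D_i}$ then yields
\[
  \prob[\eta^T_0 \neq 0] \;\geq\; \prob[X \geq 1] \;\geq\; \frac{T \delta (1 - \eta_N)^2}{1 + (T - 1)\delta} \;\xrightarrow{T \to \infty}\; (1 - \eta_N)^2.
\]
For any $\epsilon > 0$ I first choose $N$ so that $\eta_N < 1/4$ and then choose $T$ large enough that the displayed ratio exceeds $1/2$. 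This forces $P(\epsilon) \leq 1/2$ for every $\epsilon > 0$, so $P(\epsilon) \not\to 1$ and $f$ fails to be a stable eroder.
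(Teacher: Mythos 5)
Your proof is correct, and it takes a genuinely different route from the proof the paper gives (in a commented-out block). Both proofs share the same skeleton: plant a shifted copy of $x^N$ at each time slice via a burst of errors, then let the perturbation carry that island to $(0,T)$ by monotonicity and a coupling that reuses the error set. The difference is in how the dependence between the survival events at different planting times is handled. The paper conditions on the \emph{last} planting event $P'(N,t)$, which is measurable with respect to the error slices strictly before time $T-t$; since the survival of the process restarted at time $T-t$ depends only on the later slices, the conditional survival probability equals the unconditional one, and multiplying by $\prob[\exists t : P(N,t)]\to 1$ gives a bound tending to $\delta_N\to 1$. You instead count successful plantings and apply Paley--Zygmund, which needs only (i) the pairwise independence of the planting events $\tilde A_i$ (disjoint time slices) and (ii) the independence of $\tilde A_i$ from $\hat\eta^{(i),T}_0$ (again disjoint slices), together with the trivial bound $\prob[D_i\cap D_j]\le\prob[\tilde A_i\cap\tilde A_j]$. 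This is a clean replacement: where the paper uses a ``regeneration at the last renewal'' argument, you use a second-moment argument. As written you stop at $P(\epsilon)\le 1/2$, which suffices; if you wanted to match the paper's stronger conclusion $\prob[\eta^T_0\ne 0]\to 1$ you could note that your bound tends to $(1-\eta_N)^2$ as $T\to\infty$ and then send $N\to\infty$. One stylistic nit: you overload $\eta$ for both the trajectory $R^a_\epsilon(\uniform{0})$ and the deficiency $\eta_N$; renaming one would avoid confusion.
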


For the remainder of this section, we fix a monotonic automaton $f$ that does not satisfy the condition of Lemma~\ref{lem:AltStability}.
Then there exists a quiescent state $\omega \in S \setminus \{0\}$ such that $L_{a,\omega} \leq R_{\omega,a}$ for all quiescent $a < \omega$.
If there are several such states, let $\omega$ be the lowest one.
We fix a small $\epsilon > 0$, and let $R$ be the independent $\omega, \epsilon$-perturbation of $f$.
Note that a random trajectory $R(x)$ will not contain a state greater than $\omega$ if $x \leq \uniform{\omega}$, so we can safely forget the states $\{\omega + 1, \ldots, m\}$ and assume $S = \{0, \ldots, \omega\}$.

\begin{definition}
  \label{def:Inductive}
  Let $a < \omega$ be a quiescent state of $f$.
  We say that $a$ is \emph{inductive} if there exist positive real numbers $0 < \delta_a < 1$, $0 < \theta_a \leq r$ and $Q_a > 0$ with the following property for all large enough $N \geq 1$:
  If $x \in S^\Z$ is such that $x \geq \uniform{a}$ and $x_i \geq \omega$ for $-N \leq i \leq N$, then
  \[
    \prob \left[ \forall t \in \N, t(L_{a, \omega} - \theta_a) < i < t(R_{\omega,a} + \theta_a) :  R(x)^t_i = \omega \right] \geq 1 - \delta_a^{N^{Q_a}}
  \]
\end{definition}

The intuition for an inductive state $a$ is that with a high probability, every large enough island of $\omega$-states surrounded by $a$-states will spread at an average speed strictly greater than $L_{a,\omega} - R_{\omega,a}$.
This causes a ``cone'' of $\omega$-states to appear in the trajectory of the configuration.
Our goal in the remainder of this section is to prove Proposition~\ref{prop:Inductive}: every state $a < \omega$ is inductive.
In particular, the lowest state $0$ is inductive, and then we can apply Lemma~\ref{lem:ImmortalIsland} to the island configurations ${}^\infty 0 \omega^N . \omega^N 0^\infty$ to prove that $f$ is not a stable eroder.

\begin{proposition}
  \label{prop:Inductive}
  Every quiescent state $a < \omega$ is inductive.
\end{proposition}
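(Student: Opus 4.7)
The plan is to prove Proposition~\ref{prop:Inductive} by induction on the number $k_a$ of quiescent states in the open interval $(a,\omega)$, starting from $k_a = 0$ (i.e., $a$ the largest quiescent state below $\omega$) and working downward in $a$.

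For the base case, Lemma~\ref{lem:TrivialLR} gives $L_{a,\omega} = R_{a,\omega}$ and $L_{\omega,a} = R_{\omega,a}$; moreover any non-quiescent state between $a$ and $\omega$ stabilizes to $a$ or $\omega$ within a bounded number of steps inside a large homogeneous patch. I would track the rightmost $\omega$-position $r_t$ (and symmetrically the leftmost $\ell_t$), showing it behaves like a random walk whose deterministic drift matches $R_{\omega,a}$ plus a strict outward bias coming from random $\omega$-errors one cell beyond the current border: such errors are absorbed into the $\omega$-region with probability bounded below by a positive constant depending only on $f$ and $\epsilon$ (the assumption $L_{a,\omega} \leq R_{\omega,a}$ ensures absorption dominates any local retreat), giving a net outward drift $\theta_a > 0$. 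A Hoeffding-type concentration bound then produces a failure probability of the form $\delta_a^{N^{Q_a}}$ with $Q_a = 1$ and some $0 < \delta_a < 1$.

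For the inductive step with $k_a > 0$, I would invoke Lemma~\ref{lem:BorderGuard} to produce a quiescent $c$ with $a < c \leq \omega$ and $L_{c,a} = R_{\omega,a}$, together with a symmetric left guard $d$ satisfying $R_{a,d} = L_{a,\omega}$. If $c = \omega$, so $L_{\omega,a} = R_{\omega,a}$, the right border is controlled exactly as in the base case. If $c < \omega$, then $c$ is inductive by the induction hypothesis. Shortly after the $\omega$-island starts evolving, monotonicity of $f$ together with $L_{c,a} = R_{\omega,a}$ forces a deterministic protective strip of states $\geq c$ to form on the right of the $\omega$-region, whose outer edge advances into the $a$-background at rate exactly $R_{\omega,a}$. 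Applying the induction hypothesis for $c$ to this strip shows that \emph{within} it the $\omega$-region expands at rate strictly exceeding $R_{\omega,c}$ with failure probability at most $\delta_c^{W^{Q_c}}$ when the strip has width $W$. Coupling this inner growth with the outward advance of the strip yields a combined outward drift $R_{\omega,a} + \theta_a$ for some $\theta_a > 0$, and the symmetric argument using $d$ together with a union bound gives the required two-sided cone bound.

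The main obstacle is the quantitative bookkeeping of the inductive step: coupling the random $\omega$-growth \emph{inside} the $c$-strip with the outward advance of the strip itself; ensuring that the $c$-strip acquires width growing polynomially fast in $t$ so that the inductive bound $\delta_c^{W^{Q_c}}$ remains useful over time scales comparable to $t$; and controlling how the exponent $Q_a$ shrinks at each induction step. Since the number of induction steps is at most $|S|$, the final exponent $Q_0$ used when applying Lemma~\ref{lem:ImmortalIsland} to the islands ${}^\infty 0 \omega^N . \omega^N 0^\infty$ remains strictly positive; assembling all the failure probabilities into a single bound of the form $\delta_a^{N^{Q_a}}$ is the principal technical hurdle.
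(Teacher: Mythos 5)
Your overall skeleton matches the paper's proof: downward induction on $a$, with the base case ($b = \omega$ from Lemma~\ref{lem:BorderGuard}) handled by a biased random walk on the $\omega,a$-interface, and the inductive case maintaining both an outer region of states $\geq b$ and an inner region of $\omega$-states. However, the step you flag as the ``main obstacle'' is not a bookkeeping hurdle but is where the proof actually lives, and the mechanism you sketch for it does not work.

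Two concrete problems. First, you describe the protective strip's outer edge as advancing ``at rate exactly $R_{\omega,a}$.'' That is only the deterministic speed of the $\geq b$ boundary (since $L_{b,a}=R_{b,a}=R_{\omega,a}$). If the outer boundary only moves at $R_{\omega,a}$, the inner $\omega$-region can never reach the line $i = t(R_{\omega,a}+\theta_a)$, since $\omega$-states cannot exist outside the $\geq b$-region. The paper must and does give the \emph{outer} boundary a strict outward bias $R_{\omega,a}+2\theta_a$, via exactly the same random-walk-with-error-bursts argument as in the base case (Lemma~\ref{lem:RandWalkInterface} and Lemma~\ref{lem:OuterRegion}); your drift accounting conflates the deterministic rate with the biased one.

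Second, and more fundamentally: a single application of the inductive hypothesis for $b$ to a strip of width $W$ does not give you what you need. The IH (Definition~\ref{def:Inductive}) produces an \emph{infinite-time} cone under the hypothesis that the entire configuration is $\geq b$; in your setting the configuration to the far right is still $a$, so the cone would eventually escape the strip (note $R_{\omega,b}\geq R_{\omega,a}$, so the IH cone can be \emph{faster} than the outer boundary) and the IH no longer applies. Moreover, the failure probability $\delta_b^{W^{Q_b}}$ from a single application is a fixed number, whereas you need the bound to hold uniformly over all $t$. The paper's resolution — which your proposal does not contain — is to apply the IH only on \emph{finite time windows}, producing $\omega$-rectangles (Observation~\ref{obs:AlphaBeta}). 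Random errors seed short $\omega$-segments inside the $\geq b$ region (Lemma~\ref{lem:Rectangle}), and then rectangles are placed at geometrically growing scales $C_k \sim (1+\rho\beta/2)^{k/2}$ so that a generalized bootstrap-percolation argument (Lemma~\ref{lem:AllGood}) shows their union covers the whole line $i \approx t(R_{\omega,a}+\theta_a)$. Because the widths $C_k$ grow geometrically, the failure probabilities $\lambda^{C_k^{Q_b/3}}$ are summable and in fact dominated by $\xi^{N^{Q_b/3}}$, which is how the exponent degrades by a controlled factor $3$ per level. Without some version of this multi-scale covering argument, the inductive step does not close.
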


We will actually prove the following one-sided versions of inductivity, which makes the arguments conceptually simpler.

\begin{lemma}
  \label{lem:InductiveOnesided}
  Let $0 \leq a < \omega$, $\theta > 0$, $0 < \delta < 1$ and $Q > 0$.
  Suppose that we have
  \begin{align}
    \label{eq:InductiveStrong}
    \prob[ \forall t \in \N, i < t(R_{\omega,a} + \theta_a) :  R(x)^t_i = \omega ] \geq 1 - \delta^{N^Q} \\
    \label{eq:InductiveStrong2}
    \prob[ \forall t \in \N, i > t(L_{a,\omega} - \theta_a) :  R(y)^t_i = \omega ] \geq 1 - \delta^{N^Q}
  \end{align}
  for $x = {}^\infty \omega . \omega^N a^\infty$ and $y = {}^\infty a \omega^N . \omega^\infty$ and all large enough $N$.
  Then $a$ is an inductive state.
\end{lemma}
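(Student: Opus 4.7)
The plan is to reduce the two-sided inductivity claim to the two one-sided hypotheses by combining monotonicity, the radius-$r$ locality of $f$, and an iterative extension using the Markov property of the perturbation $R$. First, I would appeal to monotonicity of $R$ in the initial condition (coupling trajectories through a shared error set) to reduce to the minimal initial configuration $\hat z$ with $\hat z_i = \omega$ for $|i| \leq N$ and $\hat z_i = a$ elsewhere; any larger $z$ yields a trajectory dominating $R(\hat z)$ under the same errors.

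The core step is a single-stage extension. At stage $k$, I would assume on a high-probability event that $R(\hat z)^{T_k}$ has $\omega$'s on an interval $[\alpha_k, \beta_k]$ of half-width $N_k = (\beta_k - \alpha_k)/2$. By the Markov property and monotonicity, the future trajectory from time $T_k$ dominates $R(z_k^{\min})$ where $z_k^{\min}$ has $\omega$'s exactly on $[\alpha_k, \beta_k]$ and $a$'s elsewhere. I would couple $R(z_k^{\min})$ with trajectories of the step configurations $X_k$ (with $\omega$'s on $(-\infty, \beta_k]$) and $Y_k$ (with $\omega$'s on $[\alpha_k, \infty)$) using the same time-shifted error set. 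Applying the one-sided hypotheses with parameter $N'_k = c N_k$ for a small constant $c > 0$, combined with shift-equivariance of $R$, produces events $A_k, B_k$ of probability at least $1 - \delta^{(cN_k)^Q}$ each, on which $R(X_k)$ and $R(Y_k)$ have $\omega$-cones extending rightward at rate $R_{\omega,a} + \theta$ and leftward at rate $L_{a,\omega} - \theta$ respectively. Since $z_k^{\min}$ agrees with $X_k$ on $[\alpha_k, \infty)$ and with $Y_k$ on $(-\infty, \beta_k]$, the radius-$r$ locality of $f$ forces $R(z_k^{\min})^s_i = R(X_k)^s_i$ for $i \geq \alpha_k + rs$ and analogously for $Y_k$ when $i \leq \beta_k - rs$. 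For $s \leq N_k/r$ these two locality ranges overlap at the midpoint, and on $A_k \cap B_k$ their union gives $\omega$'s on a new interval $[\alpha_{k+1}, \beta_{k+1}]$ at time $T_{k+1} = T_k + N_k/r$ that extends left and right at the claimed rates, minus a stagewise border loss of order $cN_k$ on each side.

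I would then iterate. The widths obey a recursion $N_{k+1} \approx N_k(\mu - c)$ with $\mu = 1 + (R_{\omega,a} - L_{a,\omega} + 2\theta)/(2r) > 1$; choosing $c$ in the interval $(0, \min(\mu - 1, \theta/r))$ guarantees geometric growth of $N_k$ and the preservation of a strictly positive residual slope $\theta_a = \theta - cr$, in the sense that $\alpha_k \leq T_k(L_{a,\omega} - \theta_a)$ and $\beta_k \geq T_k(R_{\omega,a} + \theta_a)$ for every $k$. The stagewise $\omega$-regions therefore cover the inductivity cone $(t(L_{a,\omega} - \theta_a), t(R_{\omega,a} + \theta_a))$ for every $t \geq 0$. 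A union bound over all stages bounds the total failure probability by $\sum_k 2\delta^{(cN_k)^Q}$, a sum dominated by its first term $2\delta^{(cN)^Q}$ because $(cN_k)^Q$ grows geometrically in $k$; setting $Q_a = Q$ and $\delta_a$ slightly above $\delta^{c^Q}$ absorbs this into the required form $1 - \delta_a^{N^{Q_a}}$ for $N$ sufficiently large.

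The main obstacle is the simultaneous calibration of $c$: it must be small enough to keep both $\theta_a > 0$ and $\mu - c > 1$, yet large enough that the per-stage failure probabilities $\delta^{(cN_k)^Q}$ are summable. The delicate case is when the rate gap $R_{\omega,a} - L_{a,\omega}$ is small, forcing $\mu - 1$ and hence $c$ small as well; but the strict positivity $R_{\omega,a} - L_{a,\omega} + 2\theta > 0$ guaranteed by $\theta > 0$ always leaves a non-empty interval for $c$ in which the iteration closes.
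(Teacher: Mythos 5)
Your approach is genuinely different from the paper's, and it has a gap. The paper's proof is a \emph{single} splicing argument: it applies the one-sided hypotheses once to the half-width-$2N$ configurations $x = {}^\infty\omega.\omega^{2N}a^\infty$, $y = {}^\infty a\omega^{2N}.\omega^\infty$ (so that, after shifting by $\pm N$, each one-sided cone is anchored with a margin of $N$), observes that the two cones stay $\geq 2N \geq 2r$ apart for all time because $L_{a,\omega} \leq R_{\omega,a}$, and then proves by a single induction on $t$ that the finite-island trajectory $R(z)$, $z = \min(x,y)$, agrees with $R(y)$ to the left of the right cone wall and with $R(x)$ to the right of the left cone wall. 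Since the one-sided hypothesis already controls the cone for \emph{all} time, no re-application is needed. Your iteration re-proves part of what the hypothesis already gives, and the extra bookkeeping is where the argument breaks.

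The concrete gap is in the sentence ``The stagewise $\omega$-regions therefore cover the inductivity cone \dots for every $t \geq 0$'': they do not, for times strictly between $T_k$ and $T_{k+1}$ once $k$ is large. With your parameters the margin $M_k := \beta_k - T_k(R_{\omega,a}+\theta_a)$ satisfies $M_{k+1} = M_k + N_k\bigl(-c + (\theta - \theta_a)/r\bigr)$; taking $\theta_a = \theta - cr$ makes this increment vanish, so $M_k \equiv N$ for all $k$. The stage-$k$ right cone is anchored at $\beta_k - cN_k$ and has slope $R_{\omega,a}+\theta$, so its height above the target line $t(R_{\omega,a}+\theta_a)$ at time $T_k+s$ is $N - cN_k + scr$, which is negative for $0 \leq s < (cN_k - N)/(cr)$ once $cN_k > N$. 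Because $N_k$ grows geometrically while $M_k \equiv N$ is constant, this happens for all large $k$, and the trivial locality spread of the interval $[\alpha_k,\beta_k]$ (which covers the target only for $s \leq N/(r+R_{\omega,a}+\theta_a)$, a $k$-independent window) cannot bridge the growing shortfall. So the $\omega$-region genuinely fails to contain the target cone on a nonempty range of times in each late stage. The underlying tension is that for the per-stage failure probabilities to be summable you need the cone parameter $cN_k$ to grow, but for the cone to be anchored above the target line you need $cN_k \leq M_k$, and the stated choice of $\theta_a$ keeps $M_k$ from growing. You could try to repair this by choosing $\theta_a$ strictly smaller than $\theta - cr$ so that $M_k$ grows and then re-balancing all the constraints on $c$, but this requires a new invariant and a careful recheck; the paper's one-shot coupling sidesteps the entire issue.
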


\begin{proof}
  Let $K > 0$ be given by Lemma~\ref{lem:Galperin}.
  Denote $x = {}^\infty \omega . \omega^{2 N} a^\infty$ and $y = {}^\infty a \omega^{2 N} . \omega^\infty$.
  Consider the configuration $z = {}^\infty a \omega^{2 N} . \omega^{2 N} a^\infty$, and construct a coupling of the three random trajectories $R(x)$, $R(y)$ and $R(z)$ where the same underlying error set is used for each.
  Denote by $P$ the event that $R(y)^t_i = R(x)^t_i = \omega$ for all $t \in \N$ and $t (L_{a,\omega} - \theta_a) - N \leq i \leq t (R_{\omega,a} + \theta_a) + N$.
  From~\eqref{eq:InductiveStrong},~\eqref{eq:InductiveStrong2}, the assumption $L_{a,\omega} \leq R_{\omega,a}$ and the union bound, we obtain $\prob [P] \geq 1 - 2 \delta^{N^Q}$.
  If $P$ occurs and $N > r$, then for all $t \in \N$, the distance between the boundaries of $\omega$-states in $R(y)^t$ and $R(x)^t$ is at least $2 r$.
  This means that to the cellular automaton $f$, the trajectory $R(z)$ looks locally like $R(y)$ to the left of $t (R_{\omega,a} + \theta_a) + N$, and like $R(x)$ to the right of $t (L_{a,\omega} - \theta_a) - N$.
  More formally, we can prove by induction on $t$ that $R(z)^t_i = R(y)^t_i$ for all $i \leq t (R_{\omega,a} + \theta_a) + N$, and $R(z)^t_i = R(x)^t_i$ for all $i \geq (L_{a,\omega} - \theta_a) - N$.
  In particular, this shows that $R(z)^t_i = \omega$ for all $t \in \N$ and $t (L_{a,\omega} - \theta_a) \leq i \leq t (R_{\omega,a} + \theta_a)$.

  We choose $\delta_a = 2^{1/M} \delta$ where $M \geq 1$ is such that $\delta_a < 1$.
  By the above, these constants satisfy the conditions of Definition~\ref{def:Inductive}.
\end{proof}

Most of the remainder of this section is devoted to the proof of Proposition~\ref{prop:Inductive}.
It suffices to prove~\eqref{eq:InductiveStrong}, since the other case is symmetric.
We will use an auxiliary result about biased random walks.

\begin{lemma}
  \label{lem:RandWalkHitting}
  Let $(X_n)_{n \in \N}$ be a sequence of independent random variables with $\prob[X_n = 1] = p$ and $\prob[X_n = 0] = 1-p$, where $0 < p < 1$.
  For all $0 < q < p$, there exists $0 < d < 1$ such that
  \[ \prob \left[ \exists n \geq 0 : N + \sum_{i = 0}^n X_n \leq n q \right] \leq d^N \]
  for all $N \geq 1$.
\end{lemma}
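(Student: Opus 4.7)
The plan is to apply a Chernoff--style exponential moment bound pointwise in $n$, and then union-bound over $n$. This is a standard large deviations estimate for a random walk with positive drift.

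First, I would observe that the event $\{N+\sum_{i=0}^n X_i\leq nq\}$ is vacuous whenever $nq<N$, since the $X_i$ are non-negative; so only indices $n\geq N/q$ contribute. Next, I would introduce the function
\[
\rho(\lambda)=e^{\lambda q}\,E[e^{-\lambda X_1}]=e^{\lambda q}\bigl(1-p+pe^{-\lambda}\bigr).
\]
A direct computation shows $\rho(0)=1$ and $\tfrac{d}{d\lambda}\log\rho\bigm|_{\lambda=0}=q-p<0$, so there exists $\lambda_0>0$ with $\rho_0:=\rho(\lambda_0)<1$. For each $n\geq 0$ the exponential Chebyshev (Markov) inequality then yields
\[
\prob\!\left[N+\sum_{i=0}^n X_i\leq nq\right]=\prob\!\left[e^{-\lambda_0\sum X_i}\geq e^{\lambda_0(N-nq)}\right]\leq e^{-\lambda_0 N}\rho_0^{\,n+1}.
\]

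Summing the geometric series over $n\geq 0$ gives
\[
\prob\!\left[\exists n\geq 0:N+\sum_{i=0}^n X_i\leq nq\right]\leq\frac{\rho_0}{1-\rho_0}\,e^{-\lambda_0 N}=:C\,d_0^{\,N},
\]
with $d_0=e^{-\lambda_0}<1$. It remains to absorb the constant $C$ into the exponential. I would pick any $d\in(d_0,1)$: then $Cd_0^N\leq d^N$ holds for all $N\geq N^*$ with $N^*=\lceil\log C/\log(d/d_0)\rceil$. For the finitely many $N<N^*$ the claim $\prob[\cdot]\leq d^N$ requires only that $d$ exceeds the $N$-th root of each of these (finitely many) probabilities. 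Each such probability is strictly less than $1$ (e.g.\ by the strong law of large numbers, $\sum_{i=0}^n X_i/n\to p>q$ almost surely, so the event occurs for at most finitely many $n$ with positive probability bounded away from $1$), so one can enlarge $d$, still strictly below $1$, to cover those finitely many cases.

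I do not anticipate a real obstacle here; the argument is routine exponential moment analysis for a biased random walk. The only mildly delicate point is the last bookkeeping step of choosing $d<1$ that absorbs the constant factor $C$ uniformly in $N$, which is handled by combining the exponential decay for large $N$ with the trivial bound $\prob[\cdot]<1$ for the finitely many small $N$.
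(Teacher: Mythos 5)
The paper omits a proof of this lemma, treating it as a standard fact about biased random walks, so there is no author argument to compare against. Your Chernoff-plus-geometric-series strategy is the standard and correct route: the computation of $\rho(\lambda)$, the choice of $\lambda_0$, the per-$n$ exponential Markov bound, and the summation giving $\prob[\cdot] \leq C\,d_0^N$ with $C = \rho_0/(1-\rho_0)$ and $d_0 = e^{-\lambda_0}$ are all sound.

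The one step that needs tightening is the justification that $P_N := \prob[\exists n \geq 0 : N + \sum_{i=0}^n X_i \leq nq]$ is strictly below $1$ for the finitely many small $N$. The strong law gives that $N + \sum_{i=0}^n X_i - nq \to +\infty$ almost surely, hence that its infimum over $n$ is almost surely finite; but finiteness of a random infimum does not by itself imply that it exceeds a \emph{particular} fixed level with positive probability, which is what $P_N < 1$ requires. (The parenthetical ``the event occurs for at most finitely many $n$ with positive probability bounded away from~$1$'' conflates these.) A clean repair using only what you have already derived: condition on the event $\{X_0 = \cdots = X_{T-1} = 1\}$, which has probability $p^T > 0$. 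On this event the constraint $N + \sum_{i=0}^n X_i > nq$ holds automatically for $n < T$ (since $q < 1$), and for $n \geq T$ the remaining increments $X_T, X_{T+1}, \ldots$ form a fresh walk facing an effective offset $N' = N + T(1-q)$. Choosing $T$ large enough that $C\,d_0^{N'} < 1$, your own Chernoff bound gives the residual walk a positive conditional probability of never crossing, so $P_N \leq 1 - p^T\bigl(1 - C\,d_0^{N'}\bigr) < 1$. With that patch, the concluding bookkeeping---picking $d \in (d_0, 1)$ for large $N$ and then enlarging $d$ slightly to cover the finitely many small $N$---goes through exactly as you describe.
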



We prove Proposition~\ref{prop:Inductive} by downward induction on the state $a$, and the idea of the proof is this.
We consider the configuration of~\eqref{eq:InductiveStrong} for a large $N \in \N$.
By Lemma~\ref{lem:BorderGuard}, there exists a quiescent state $a < b \leq \omega$ with $L_{b,a} = R_{\omega,a}$.
Either $b = \omega$ or $b$ is inductive.
In the former case, we essentially have a biased random walk on the $\omega,a$-interface of the configuration ${}^\infty \omega . \omega^N a^\infty$.
The random walk is slightly biased to the right of $R_{\omega,a}$, and we can apply Lemma~\ref{lem:RandWalkHitting} to it.
In the latter case, we maintain two regions in the random trajectory: an inner region of $\omega$-states, and an outer region of quiescent states at least $b$.
The surface of the outer region behaves like a random walk as in the first case, and the surface of the inner region is constantly ``repaired'' by randomly appearing patches of $\omega$-states that produce small $\omega$-cones as per the induction hypothesis.
Before proceeding with the proof, we formalize the argument about random walks, since it is used in both cases.
Recall the number $K > 0$ from Lemma~\ref{lem:Galperin}.

\begin{lemma}
  \label{lem:RandWalkInterface}
  Let $0 < d < 1$ be given by Lemma~\ref{lem:RandWalkHitting} for $p = \epsilon^{K+1}$ and $q = p/2$.
  Let $a < b \in S$ be quiescent states with $L_{b,a} = R_{b,a}$.
  Let $M \in \N$, and let $x = {}^\infty b . b^{M + K} a^\infty$ be the step of type $b,a$ shifted $M+K$ steps to the right.
  Then
  \[
    \prob [ \forall t \in \N, i \leq (R_{b,a} + q) t : R(x)^t_i \geq b ] \geq 1 - d^M
  \]
\end{lemma}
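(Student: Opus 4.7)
The plan is to express the right boundary of the $\geq b$-region in $R(x)^t$ as a deterministic drift plus a positively biased random walk, and then apply Lemma~\ref{lem:RandWalkHitting}. Write $B(t) = \max\{i : R(x)^t_i \geq b\}$. Since an independent $\omega,\epsilon$-perturbation only increases states, $R(x)^t \geq f^t(x)$ for every $t$. The assumption $L_{b,a}=R_{b,a}$ together with Lemma~\ref{lem:Galperin} (applied to the step $x$ shifted $M+K$ to the right) yields
\[
  B(t) \geq M + tR_{b,a} - 1 \quad \text{for every } t,
\]
so the conclusion is automatic when $t \leq (M-1)/q$.

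The central step is to construct a sequence of i.i.d.\ Bernoulli$(p)$ indicators $\xi_1, \xi_2, \ldots$ with $p = \epsilon^{K+1}$, each measurable with respect to a block of $K+1$ space-time cells in the underlying error set $E$, the blocks being pairwise disjoint, such that almost surely
\[
  B(t) \geq M + tR_{b,a} - 1 + \sum_{s=1}^t \xi_s \quad \text{for every } t.
\]
For each $t$ I would reserve, in advance, a vertical column of $K+1$ cells near the expected position of the deterministic interface at time $t$, and set $\xi_t=1$ exactly when all those cells are in $E$. Disjointness across distinct $t$ guarantees independence, and $K+1$ errors suffice to overcome the $\pm K$ Galperin fluctuation: at least one of the resulting $\omega$-states lands on the actual deterministic interface whatever its fluctuation. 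A short monotonicity argument, comparing $R(x)^{t'}$ for $t'\geq t$ with the deterministic trajectory starting from $x$ with its $b$-boundary shifted one cell further to the right, then shows that each success $\xi_t=1$ permanently contributes a $+1$ to $B(\cdot)$ from time $t$ onward.

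Granting the coupling, Lemma~\ref{lem:RandWalkHitting} applied with probability $p = \epsilon^{K+1}$, rate $q=p/2$, and starting height $M$ gives
\[
  \prob\!\left[\exists\, t \geq 0 : M + \sum_{s=1}^t \xi_s \leq qt \right] \leq d^M.
\]
On the complementary event, $B(t) > tR_{b,a} + qt - 1$ for every $t$, and hence $R(x)^t_i \geq b$ for every $i \leq (R_{b,a}+q)t$, which is the desired conclusion.

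The hard part will be making this coupling precise. Because $R_{b,a}$ is only rational and the Galperin bound pins down the interface only up to $\pm K$, one likely needs to work on epochs of length $T$ with $TR_{b,a}\in\Z$, to reserve for each epoch a disjoint vertical block of $K+1$ cells above the expected interface location, and to verify via a monotone comparison with a shifted-step deterministic trajectory that each successful block really produces a $+1$ extension of the $\geq b$-region. The remaining arithmetic is routine.
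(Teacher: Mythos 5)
Your overall strategy matches the paper's: introduce i.i.d.\ Bernoulli indicators defined on blocks of $K+1$ error cells, couple the $\geq b$-interface $B(t)$ to a positively biased random walk, and finish with Lemma~\ref{lem:RandWalkHitting}. The place where you and the paper diverge, and where your sketch has a real gap, is precisely the coupling step you flag as ``the hard part.''

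The gap is that you reserve the $K+1$-cell block at time $t$ \emph{at a position fixed in advance} ``near the expected position of the deterministic interface at time $t$,'' whereas the paper places the block at time $t$ at the cells $(i^1_t+1,t),\ldots,(i^1_t+K+1,t)$, where $i^1_t$ is the right boundary of the $\geq b$-region of $f(R(x)^t)$ -- an \emph{adaptive}, trajectory-dependent position. Adaptivity is not cosmetic here. After a handful of earlier successes have pushed the actual interface strictly ahead of the deterministic prediction (this advance is the entire purpose of the construction), a block placed near the deterministic position $M + tL_{b,a}$ lies wholly inside the current $\geq b$-region and produces no extension at all; the claim that ``each success $\xi_t=1$ permanently contributes a $+1$ to $B(\cdot)$'' then fails. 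Working in epochs and choosing blocks relative to a ``shifted-step'' reference configuration (one shift per prior success) does not rescue the fixed-position scheme, because the correct shift depends on the random number of prior successes, which is exactly the quantity you would need to know in advance.

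There is also a secondary issue with the ``monotone comparison with a shifted-step deterministic trajectory.'' To dominate the shift-by-one of a \emph{ladder} $f^t(x_n)$ you must fill every cell where the ladder strictly decreases; under $L_{b,a}=R_{b,a}$ that transition zone has width up to about $2K$, which a $K+1$-cell block need not cover. The paper sidesteps this entirely: after a success it only compares $R(x)^{t+1}$ with the $b,a$-\emph{step} whose jump is at $i^0_{t+1}+1$ (which is automatic from the definition of $i^0_{t+1}$), never with a shifted ladder, so tracking the single boundary $B(t)$ with $K+1$ errors per step is enough.

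Finally, note that with adaptive positions the blocks at distinct $t$ are still disjoint (they live on distinct time slices), so your ``disjointness guarantees independence'' observation still applies; the cleanest justification, and the one implicit in the paper, is the Markov property: conditionally on $R(x)^0,\ldots,R(x)^t$ the block position at time $t$ is determined and the errors at level $t$ of $E$ are fresh, so $X(t)$ is Bernoulli$(p)$ given $X(0),\ldots,X(t-1)$. Once you replace fixed by adaptive positions, the rest of your plan (deterministic drift lower bound from Lemma~\ref{lem:Galperin}, $i^0_{t+1}\geq i^1_t+K+1$ on a success, random-walk hitting estimate) goes through essentially as in the paper.
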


\begin{proof}
  We know that $f^t(x)_i = b$, and thus $R(x)^t_i \geq b$, holds for all $t \geq 0$ and $i < L_{b,a} t + M$.
  Let $E \subset \Z \times \N$ be the underlying error set of $R(x)$.
  Let us define a sequence of random variables $(X(t))_{t \in \N}$ with values in $\{0, 1\}$ as follows.
  For $t, s \in \N$, let $i^s_t \in \Z$ be the largest integer with $f^s(R(x)^t)_i \geq b$ for all $i \leq i^s_t$.
  Note that $i^0_{t+s} \geq i^s_t \geq i^0_t + L_{b,a} s - K$ for all $t, s \in \N$ (the latter inequality follows from Lemma~\ref{lem:Galperin}).
  We set $X(t) = 1$ if and only if $(i^1_t + j, t - 1) \in E$ holds for all $j \in \{1, \ldots, K+1\}$.
  In that case we have  $i^0_{t+1} \geq i^1_t + K + 1$, which implies $i^0_{t+s} \geq i^0_t + L_{b,a} s - K + 1$ for all $s \geq 1$.
  In general, if $1 \leq t_1 < t_2 < \cdots < t_n$ are such that $X(t_k) = 1$ for all $1 \leq k \leq n$, then $i^0_{t_n + s} \geq i^0_0 + L_{b,a} s - K + n = L_{b,a} s + M + n$ for all $s \geq 1$.
  
  Each $X(t)$ has value 1 independently with probability $p = \epsilon^{K+1}$, so the sum $\sum_{t = 1}^T X_t$ forms a simple random walk.
  By Lemma~\ref{lem:RandWalkHitting}, the probability that $i^0_t < (L_{b,a} + q) t$ for some $t \in \N$ is then at most $\alpha^M$.
  This is equivalent to $R(x)^t_i < b$ for some $t \in \N$ and $i \leq (L_{b,a} + q) t = (R_{b,a} + q) t$.
\end{proof}

We proceed with the proof of Proposition~\ref{prop:Inductive} by~\eqref{eq:InductiveStrong}.
Assume first that $b = \omega$, so that $L_{\omega,a} = R_{\omega,a}$.
When $N > K$, we can apply Lemma~\ref{lem:RandWalkInterface} to $M = N - K$, and choose $\theta_a = q = \epsilon^{K+1}/2$, $\delta_a = d$ and $Q_a = 1$ to obtain~\eqref{eq:InductiveStrong}.

Suppose now that  $a < b < \omega$, so that $b$ is inductive by the induction hypothesis.
Without loss of generality, we assume that $R_{\omega,a} > 0$.
If this is not the case, we compose $f$ with a large right shift, which does not change its eroding properties.
Define $x = {}^\infty \omega . \omega^N a^\infty$ and $\theta_a = \epsilon^{K+1} / 3$.
We will determine the values of $\delta$ and $Q$ later.

\begin{definition}
  We say that a trajectory $R(x)$ is \emph{$\omega$-good} if
  \begin{equation}
    \label{eq:InnerCone}
    \forall t \in \N, i \leq t (R_{\omega,a} + \theta_a) : R(x)^t_i = \omega
  \end{equation}
  and \emph{$b$-good} if
  \begin{equation}
    \label{eq:OuterCone}
    \forall t \in \N, i \leq t (R_{\omega,a} + 2 \theta_a) : R(x)^t_i \geq b
  \end{equation}
\end{definition}

Our goal in this proof is to maintain an inner region of $\omega$-states and a slightly wider outer region of states that are at least $b$.
The above definitions formalize this goal.
A trajectory is $\omega$-good if we are able to maintain the inner region indefinitely, and $b$-good if we maintain the outer region indefinitely.
As in the proof of the case $b = \omega$, maintaining the outer region indefinitely is relatively simple.

\begin{lemma}
  \label{lem:OuterRegion}
  Let $0 < d < 1$ be given by Lemma~\ref{lem:RandWalkHitting} for $p = \epsilon^{K+1}$ and $q = p / 2$.
  For $N > K$, the probability of $R(x)$ being $b$-good is at least $1 - d^{N - K}$.
\end{lemma}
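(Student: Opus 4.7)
The plan is to reduce the claim to Lemma~\ref{lem:RandWalkInterface}, applied to the step configuration $y = {}^\infty b . b^N a^\infty$ as a comparison trajectory. The key observation is that $x \geq y$ pointwise, since $\omega \geq b$ on the support of $y$'s nonzero part and the two configurations agree elsewhere, so a monotone coupling will transfer any lower bound on $R(y)$ to one on $R(x)$.

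The first task is to verify the hypothesis $L_{b,a} = R_{b,a}$ needed to invoke Lemma~\ref{lem:RandWalkInterface}. By the choice of $b$ via Lemma~\ref{lem:BorderGuard} we already have $L_{b,a} = R_{\omega,a}$. Combining this with the general inequality $L_{b,a} \leq R_{b,a}$ from Lemma~\ref{lem:TrivialLR} and with the monotonicity $R_{b,a} \leq R_{\omega,a}$ from Lemma~\ref{lem:MonotoneLR} (using $b \leq \omega$), the chain $L_{b,a} \leq R_{b,a} \leq R_{\omega,a} = L_{b,a}$ collapses to $L_{b,a} = R_{b,a} = R_{\omega,a}$. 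So the hypothesis of Lemma~\ref{lem:RandWalkInterface} holds and its rate coincides with $R_{\omega,a}$.

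Second, I would couple $R(x)$ and $R(y)$ by sharing the same underlying error set $E$. The local rule of the independent $\omega,\epsilon$-perturbation --- which outputs $\max(\omega, f(\cdot))$ at coordinates of $E$ and $f(\cdot)$ elsewhere --- is monotone in its argument because $f$ is monotone and $\max(\omega, \cdot)$ preserves order. An easy induction on $t$ starting from $R(x)^0 = x \geq y = R(y)^0$ then gives $R(x)^t \geq R(y)^t$ for all $t \in \N$.

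Finally, I would apply Lemma~\ref{lem:RandWalkInterface} with the chosen $a < b$ and $M = N - K$ (so that $M + K = N$). This yields, with probability at least $1 - d^{N-K}$, the event $R(y)^t_i \geq b$ for all $t \in \N$ and $i \leq (R_{b,a} + q) t = (R_{\omega,a} + q) t$. Provided the constants have been arranged so that $q \geq 2\theta_a$ (which is a matter of choosing $\theta_a$ small enough compared to the $q$ used to define $d$), the random-walk cone contains the outer cone $i \leq (R_{\omega,a} + 2\theta_a) t$, and the coupling $R(x)^t \geq R(y)^t$ transfers the conclusion to $R(x)$, which is exactly~\eqref{eq:OuterCone}. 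The substantive content of the proof is really the identification of rates in the second paragraph; the coupling and the cone comparison are routine bookkeeping, and I do not expect any serious obstacle beyond making sure the constants line up.
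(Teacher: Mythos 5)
Your proposal is correct and takes exactly the route the paper intends: the paper's own proof of Lemma~\ref{lem:OuterRegion} is the single sentence ``This follows directly from Lemma~\ref{lem:RandWalkInterface},'' and your argument supplies the pieces that statement leaves implicit. The chain $L_{b,a} \leq R_{b,a} \leq R_{\omega,a} = L_{b,a}$ (Lemma~\ref{lem:TrivialLR}, Lemma~\ref{lem:MonotoneLR}, and the choice of $b$ via Lemma~\ref{lem:BorderGuard}) is precisely what verifies the hypothesis $L_{b,a}=R_{b,a}$ of Lemma~\ref{lem:RandWalkInterface}, and the monotone coupling with $R({}^\infty b . b^{N} a^\infty)$ via a shared error set is a clean way to pass from the step configuration of that lemma to the actual $x = {}^\infty \omega.\omega^N a^\infty$. (An alternative that dispenses with the coupling is to note that the proof of Lemma~\ref{lem:RandWalkInterface} only ever uses that the initial configuration is $\geq b$ on $(-\infty, M+K]$ rather than exactly equal to the step, so it already applies to $x$ verbatim.) Either way, $M = N-K$ gives the bound $1 - d^{N-K}$.

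Your caveat about the constants is well placed, and in fact it points to a small slip in the paper rather than a gap in your argument. The inclusion of cones requires $q \geq 2\theta_a$, but the paper fixes $q = \epsilon^{K+1}/2$ in the statement of Lemma~\ref{lem:OuterRegion} while setting $\theta_a = \epsilon^{K+1}/3$ in the case $a < b < \omega$, giving $2\theta_a = 2\epsilon^{K+1}/3 > q$; so the inclusion you need fails for the stated values. This is harmless: nothing in the section depends on the exact value of $\theta_a$ beyond it being a small positive constant, and replacing $\epsilon^{K+1}/3$ by, say, $\epsilon^{K+1}/6$ restores the inequality. You were right not to take ``the constants line up'' on faith.
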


\begin{proof}
  This follows directly from Lemma~\ref{lem:RandWalkInterface}.
\end{proof}

We now turn to the problem of maintaining the inner region.
For convenience, we make the following observation about the state $b$: in a trajectory that starts from a configuration above $\uniform{b}$ and contains a long sequence of $\omega$-states, with a high probability one finds a large rectangle of $\omega$-states.
This is simply because one expects to find an infinite cone of $\omega$-states by the inductive hypothesis of Proposition~\ref{prop:Inductive}, inside which the rectangle can be picked.
See Figure~\ref{fig:AlphaBetaCone} for a visualization.
Note that we have a lot of freedom in choosing the numbers $\alpha$ and $\beta$.
In particular, we can choose $\alpha$ as large as we want, and $\beta$ as small as we want.
Recall the definition $D_b = (L_{b,\omega} + R_{\omega,b})/2$.

\begin{observation}
  \label{obs:AlphaBeta}
  There exist numbers $\alpha > 0$, $0 < \beta < 1$ and $0 < \lambda < 1$ with the following property.
  Let $C, M \geq 1$ (not necessarily integers), and let $y \in S^\Z$ be such that $y_i = \omega$ for $0 \leq i \leq M$ and $y_i \geq b$ for $i \leq C \alpha$.
  If $M$ and $C / M$ are large enough, then
  \[
    \prob \left[ \forall \; 0 \leq i \leq (R_{\omega,a} + \theta_a) C \beta, 0 \leq t \leq 2 C \beta : R(y)^{\lfloor C + t \rfloor}_{\lfloor C D_b + i \rfloor} = \omega \right] \geq 1 - \lambda^{M^{Q_b}}
  \]
  If this event occurs, we call the set $[C D_b, C D_b + (R_{\omega,a} + \theta_a) C \beta] \times [C, C + 2 C \beta]$ an \emph{$\omega$-rectangle}.
\end{observation}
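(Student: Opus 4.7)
The plan is to reduce the statement to the inductivity of $b$, which is available by the outer downward induction in the proof of Proposition~\ref{prop:Inductive}. Although $y \geq \uniform{b}$ may fail globally, the hypothesis $y_i \geq b$ on the half-line $\{i \leq C\alpha\}$ will, for a suitably large $\alpha$, cover the entire backward light cone of every space-time point in the target rectangle, letting us replace $y$ by a globally dominant configuration without changing the dynamics there.

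Concretely, define $\tilde y$ by $\tilde y_i = y_i$ for $i \leq C\alpha$ and $\tilde y_i = b$ otherwise, so that $\tilde y \geq \uniform{b}$, and couple $R(y)$ with $R(\tilde y)$ via a common underlying error set. Since $f$ has radius $r$, the values $R(y)^s_j$ and $R(\tilde y)^s_j$ must coincide whenever $[j - rs, j + rs] \subseteq \{i \leq C\alpha\}$, and a routine bound shows this holds throughout the candidate rectangle as soon as $\alpha > D_b + (R_{\omega,a}+\theta_a)\beta + r(1 + 2\beta)$. Next, translate $\tilde y$ left by $N = \lfloor M/2 \rfloor$ to center the $\omega$-block in the symmetric window $[-N,N]$ and invoke the inductivity of $b$: with probability at least $1 - \delta_b^{N^{Q_b}}$, every coordinate $(j, s)$ with $s(L_{b,\omega}-\theta_b) + N < j < s(R_{\omega,b}+\theta_b) + N$ carries the state $\omega$ throughout $R(\tilde y)$, and by the locality argument the same is true for $R(y)$.

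What remains is a geometric check that, for $\beta$ small enough and $C/M$ large enough, the rectangle $[CD_b, CD_b + (R_{\omega,a}+\theta_a)C\beta] \times [C, C + 2C\beta]$ lies strictly inside this shifted cone uniformly in $s$; the identity $D_b = (L_{b,\omega}+R_{\omega,b})/2$ together with the positivity of $\theta_b$ produces left and right margins of order $C\theta_b$ that dominate both the rectangle's horizontal extent $O(\beta C)$ and the shift $N \leq M/2$. The constants are selected in the order $\beta$, then $\alpha$, then $\lambda$, taking for instance $\lambda = \delta_b^{2^{-Q_b}}$ so that $\delta_b^{N^{Q_b}} \leq \lambda^{M^{Q_b}}$ for all large $M$. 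The main obstacle is the uniform control over $s \in [C, C + 2C\beta]$: the tight direction of the cone-containment inequalities differs at the two endpoints, forcing $\beta$ to be chosen small relative to $\theta_b$ to handle both simultaneously, and this same margin is what requires $C/M$ to be assumed large, in order to absorb the centering shift $N$ into the left and right cone margins.
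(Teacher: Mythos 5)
Your proposal matches the approach the paper sketches informally around Observation~\ref{obs:AlphaBeta}: pad the configuration to $\tilde y \geq \uniform{b}$, use the finite propagation speed (with $\alpha$ large enough that everything to the right of $C\alpha$ is causally irrelevant to the target rectangle) to couple $R(y)$ with $R(\tilde y)$, recenter the $\omega$-block, invoke the inductivity of $b$ to get an $\omega$-cone, and then verify geometrically that the rectangle sits inside the shifted cone once $\beta$ is small and $C/M$ is large enough to absorb the $O(M)$ recentering shift into the $\Theta(C\theta_b)$ margins. One small slip: with $N = \lfloor M/2\rfloor$ you may have $N < M/2$ (odd $M$), so $\lambda = \delta_b^{2^{-Q_b}}$ does not quite give $\delta_b^{N^{Q_b}} \leq \lambda^{M^{Q_b}}$; taking for example $\lambda = \delta_b^{3^{-Q_b}}$ (using $N \geq M/3$ for $M \geq 3$) repairs this trivially, and the claim that the full cone persists in $R(y)$ should be restricted to the rectangle, which is all the light-cone bound actually gives and all you subsequently use.
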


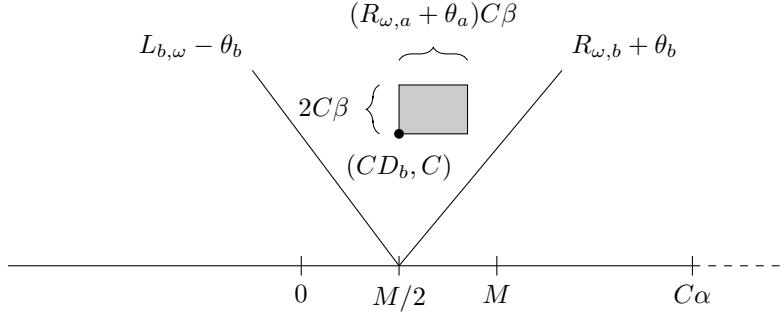
\begin{figure}[ht]
  \begin{center}
    \begin{tikzpicture}[xscale=1.3,yscale=1.3]

      \coordinate (bl) at (0,1.35);
      \coordinate (br) at (0.7,1.35);
      \coordinate (tl) at (0,1.85);
      \coordinate (tr) at (0.7,1.85);
      
      \draw [fill=black!20] (bl) rectangle (tr);

      \fill (bl) circle (0.05cm);
      
      \foreach \x in {-1,0,1,3}{
        \draw (\x,-.1) -- ++(0,.2);
      }
      
      \draw (0,0) -- (-1.5,2);
      \draw (0,0) -- (5/3,2);
      

      \coordinate (bline) at (-0.2,0);
      \coordinate (tline) at (-0.2,1);
      
      \draw [decorate,decoration={brace,amplitude=5pt}] (intersection of bline--tline and bl--br) -- node [midway,left,xshift=-8pt] {$2 C \beta$} (intersection of bline--tline and tl--tr);

      \coordinate (lline) at (-1,1.9);
      \coordinate (rline) at (1,1.9);

      \coordinate (lbr) at (lline);
      \coordinate (mbr) at (intersection of lline--rline and bl--tl);
      \coordinate (rbr) at (intersection of lline--rline and br--tr);
      
      \draw [decorate,decoration={brace,amplitude=5pt}] ($(mbr)+(0,.2)$) -- ($(rbr)+(0,.2)$) node [midway,above,yshift=8pt] {$(R_{\omega,a} + \theta_a) C \beta$};
      
      \draw (-4,0) -- (3,0);
      \draw [dashed] (3,0) -- (4,0);
      
      \node [below] at (-1,-.1) {$0$};
      \node [below] at (0,-.1) {$M/2$};
      \node [below] at (1,-.1) {$M$};
      \node [below] at (3,-.1) {$C \alpha$};

      \node [below=0.15cm] at (bl) {$(C D_b, C)$};
      
      \node [above left] at (-1.5,2) {$L_{b,\omega} - \theta_b$};
      \node [above right] at (5/3,2) {$R_{\omega,b} + \theta_b$};
      
      
    \end{tikzpicture}
  \end{center}
  \caption{Geometric interpretation of Observation~\ref{obs:AlphaBeta}, not drawn to scale. Time increases upward. With high probability, the cone contains only $\omega$-states by the induction hypothesis, so the shaded rectangle contains only $\omega$-states as well. The non-dashed part of the horizontal line contains states that are at least $b$, and the part between $0$ and $M$ contains $\omega$-states.}
  \label{fig:AlphaBetaCone}
\end{figure}

The constant $\alpha$ in the Observation should be chosen so that any coordinate of $y$ to the right of $C \alpha$ has no influence on the evolution of the $\omega$-rectangle.
Such a choice is possible due to the finite radius of $f$.
The rectangle is nonempty for large enough $C$, since we assumed $R_{a, \omega} > 0$.
We will now dispose of the requirement of $\omega$-states in Observation~\ref{obs:AlphaBeta} by considering a smaller $\omega$-rectangle.

\begin{lemma}
  \label{lem:Rectangle}
  Let $C \geq 1$, and let $y \in S^\Z$ satisfy $y_i \geq b$ for $i \leq C (\alpha + r \beta)$. 
  If $C$ is large enough, then
  \begin{align*}
    & \prob \left[ \forall \; 0 \leq i \leq (R_{\omega,a} + \theta_a) C \beta, C \beta \leq t \leq 2 C \beta : R(y)^{t + \lfloor C \rfloor}_{i + \lfloor C D_b \rfloor} = \omega \right] \\
    {} \geq {}
    & 1 - \lambda^{C^{Q_b/3}} 
  \end{align*}
  with the notation of Observation~\ref{obs:AlphaBeta}.
\end{lemma}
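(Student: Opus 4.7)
The strategy is to use the interval $[0, \lfloor C\beta \rfloor]$ as a buffer in which to wait for a random error burst that creates an $\omega$-segment, and then to apply Observation~\ref{obs:AlphaBeta} to extend this segment to the target rectangle. During the buffer, quiescence of $b$ together with the fact that errors in the independent $\omega,\epsilon$-perturbation never decrease states gives, by a simple induction on $t$, that $R(y)^t_i \geq b$ for every $i \leq C(\alpha + r\beta) - rt$. In particular, for each $t_0 \leq \lfloor C\beta \rfloor$ the configuration $R(y)^{t_0}$ still has a $b$-region of width at least $C\alpha$, which is exactly the hypothesis required by Observation~\ref{obs:AlphaBeta} applied at time $t_0$.

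For each $t_0 \in \{0, 1, \ldots, \lfloor C\beta \rfloor\}$, let $A_{t_0}$ be the event that every coordinate $(i, t_0)$ with $i$ in a prescribed interval $I_{t_0}$ of length $M$ belongs to the underlying error set $E$; these events are mutually independent (they live on disjoint time slices of $E$) and each has probability $\epsilon^M$. The interval $I_{t_0}$ is positioned so that the $\omega$-rectangle produced by Observation~\ref{obs:AlphaBeta} applied at time $t_0 + 1$ with parameter $C$ contains the target rectangle of the Lemma; since the Observation's rectangle has time extent $2C\beta$, twice that of the target, the extra slack of $C\beta$ is precisely what lets $t_0$ range over the whole buffer. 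Conditional on $A_{t_0}$ holding for some $t_0$, Observation~\ref{obs:AlphaBeta} then places $\omega$-states throughout the target with probability at least $1 - \lambda^{M^{Q_b}}$, so a union bound over "no burst" and "burst but Observation fails" gives total failure at most $(1 - \epsilon^M)^{\lfloor C\beta \rfloor + 1} + \lambda^{M^{Q_b}}$.

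The main obstacle is the choice of $M$: forcing $\lambda^{M^{Q_b}} \leq \lambda^{C^{Q_b/3}}$ requires $M \geq C^{1/3}$, whereas forcing $(1 - \epsilon^M)^{\lfloor C\beta \rfloor + 1}$ below the same bound forces $M = O(\log C / |\log \epsilon|)$, and these are incompatible for large $C$. The plan is therefore to bootstrap in stages. A small burst of size $M_0 = \Theta(\log C)$ can be caught with failure $\exp(-C^{1-o(1)})$; a single application of Observation~\ref{obs:AlphaBeta} with a small parameter $C_0 = \Theta(M_0^{1+\delta})$ turns this into an $\omega$-segment of length $M_1 = (R_{\omega,a} + \theta_a)C_0\beta$, and iterating this geometric-doubling-in-log-scale step $O(\log\log C)$ times---each step fitting comfortably inside the remaining time budget, since the total time used remains $O(C^{(1+\delta)/3}) \ll C\beta$---produces an $\omega$-segment of length $M_k \geq C^{1/3}$, after which a final application of Observation~\ref{obs:AlphaBeta} with parameter $C$ delivers the target rectangle. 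The bottleneck failure $\lambda^{M_0^{Q_b}}$ at the first iteration is driven below $\lambda^{C^{Q_b/3}}$ by running the bootstrap attempts in many disjoint sub-intervals of the buffer and intersecting their failure events, exploiting independence across time slices exactly as in the single-shot analysis.
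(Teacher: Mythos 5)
Your proposal takes a genuinely different route from the paper, though it attacks the same obstacle. You correctly identify the tension: a constant-size burst makes the Observation's failure $\lambda^{M^{Q_b}}$ a constant, while a burst of size $C^{1/3}$ is too improbable to be caught in the time budget. To overcome this you propose an $O(\log\log C)$-stage bootstrap, growing segments geometrically in $\log$-scale from $M_0=\Theta(\log C)$ up to $C^{1/3}$, and drive down the bottleneck failure $\lambda^{M_0^{Q_b}}$ by running $T$ disjoint independent attempts.

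The paper resolves the tension with a single intermediate stage at the $\sqrt{C}$ scale, which is considerably simpler. It partitions the buffer $[0,C\beta]$ into $u = \lfloor\sqrt{C\beta}\rfloor$ disjoint slices of length $u$ each. In each slice one seeks a burst of \emph{constant} size $M$, then applies Observation~\ref{obs:AlphaBeta} with parameter $u$ (not $C$) to grow the burst into an $\omega$-segment of length $\Theta(u\beta)=\Theta(\sqrt{C})$ within that slice; the per-attempt success probability $\gamma \approx \epsilon^{M+1}(1-\lambda^{M^{Q_b}})$ is a positive constant, and because the slices carry disjoint error sets the probability that every slice fails is $(1-\gamma)^{u-1}=\exp(-\Theta(\sqrt{C}))$. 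The first successful slice yields a segment of length $\Theta(\sqrt{C})$, and a final application of Observation~\ref{obs:AlphaBeta} with parameter $C$ then delivers the target rectangle with failure $\lambda^{\Theta(C^{Q_b/2})}$. Both error terms sit below $\lambda^{C^{Q_b/3}}$ because the inductive choice $Q_a = Q_b/3$ starting from $Q_a = 1$ in the base case keeps $Q_b \leq 1 < 3/2$. What the paper buys is a fixed number of Observation applications and constant-size bursts; what your scheme buys is nothing the paper lacks, and it carries extra bookkeeping you leave unverified: the accumulated spatial shift $\sum_i D_b C_i$ through the $\log\log C$ stages must be tracked so the final rectangle still contains the fixed target location, the attempts must be packed so that $T\cdot O(C^{1/3}) \leq C\beta$ while $T \geq C^{Q_b/3+o(1)}$ (which forces the constant in $M_0=\Theta(\log C)$ to be small and requires $Q_b < 2$), and the chain of conditional probabilities across stages within one attempt needs to be made precise. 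None of these appear to be fatal, but the two-stage argument avoids them entirely.
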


\begin{proof}
  The idea of the proof is to consider a family of $\sqrt{C \beta}$ rectangles that overlap in the desired region.
  Each of them can be generated by a short horizontal segment of $\omega$-states that happens to occur in the trajectory at the right position.
  Instead of applying Observation~\ref{obs:AlphaBeta} directly to these segments, we only consider those segments that grow at a linear pace for $\sqrt{C \beta}$ steps, which produces a better estimate for the probability of the final rectangle.

  Denote $u = \lfloor \sqrt{C \beta} \rfloor$, and for $0 \leq n < u$, denote $t_n = n u$.
  Let $M \geq 1$ be an integer constant so large that Observation~\ref{obs:AlphaBeta} holds for it, and $\gamma = \epsilon^{M + 1} + \delta_b^{M^{Q_b}} < 1$.
  Denote by $P_1(n)$ the event that $(i - \lceil D_b u \rceil, t_n-1) \in E$ for all $0 \leq i \leq M$, and by $P_2(n)$ the event that $R(y)^{t_{n+1}}_i = \omega$ for all $0 \leq i \leq (R_{\omega,a} + \theta_a) u \beta$.

  We now estimate the probabilities of $P_2(n)$ and the occurrence of the $\omega$-rectangle.
  We have $\prob [P_1(n)] = \epsilon^{M + 1}$ for each $0 \leq n < u-1$, since the elements of $E$ are chosen independently.
  If $P_1(n)$ occurs, then $R(y)^{t_n}_{i - \lceil D_b u \rceil} = \omega$ for each $0 \leq i \leq M$.
  Furthermore, we have
  \begin{equation}
    \label{eq:Spread}
    R(y)^t_i \geq b \text{~for all~} t \geq 0, i \leq C (\alpha + r \beta) - r t
  \end{equation}
  since $r$ is a radius for $f$.
  Since $t_n \leq C \beta$ and $b$ is a quiescent state, this implies $R(y)^{t_n}_{i - \lceil D_b u \rceil} \geq b$ for all $i \leq C \alpha + \lceil D_b u \rceil$.
  Noticing that $C \alpha + \lceil D_b u \rceil \geq u \alpha$, we now apply Observation~\ref{obs:AlphaBeta} to (a shifted version of) $R(y)^{t_n}$ with the variables $M$ and $u$.
  If $C$ is large enough, this gives us $R(y)^{t_{n+1}}_i = \omega$ for all $0 \leq i \leq (R_{\omega,a} + \theta_a) u \beta$ with probability at least $1 - \lambda^{M^{Q_b}}$.
  Hence we have $\prob [P_2(n) \;|\; P_1(n)] \geq 1 - \delta_b^{M^{Q_b}}$.
  This bound is valid even when conditioned on any combination of $P_1(k)$ and $P_2(k)$ for any $k < n$, since it only depends on the composition of the error set $E$ between time steps $t_n$ and $t_{n+1}$.
  As long as $C$ is large enough, the probability of $P_2(n)$ occurring for some $0 \leq n < u-1$ is thus at least $1 - (1 - \gamma)^{u-1}$.
  
  Next, let $P_3(n)$ denote the event that $P_2(n)$ holds but $P_2(k)$ does not hold for any $k < n$.
  If $P_3(n)$ holds, then~\eqref{eq:Spread} implies $R(y)^{t_{n+1}}_i \geq b$ for all $i \leq C \alpha$, since $t_{n+1} \leq C \beta$.
  We can then apply Observation~\ref{obs:AlphaBeta} to $R(y)^{t_{n+1}}$ with the variables $(R_{\omega,a} + \theta_a) u \beta$ and $C$.
  This yields
  \begin{align}
    \label{eq:RectBound}
    \begin{split}
    & \prob \left[ \forall \; 0 \leq i \leq (R_{\omega,a} + \theta_a) C \beta, 0 \leq s \leq 2 C \beta  : R(y)^{t_{n+1} + \lfloor C + s \rfloor}_{\lfloor C D_b + i \rfloor} = \omega \;\middle|\; P_3(n) \right] \\
    {} \geq {} & 1 - \lambda^{((R_{\omega,a} + \theta_a) u \beta)^{Q_b}}
    \end{split}
  \end{align}
  which implies the event in the statement of this Lemma, as $t_{n+1} \leq C \beta < C$.
  
  If $C$ is large enough, we have $\lambda^{C^{Q_b/3}} \geq (1 - \gamma)^{u-1} + \lambda^{((R_{\omega,a} + \theta_a) u \beta)^{Q_b}}$.
  The claim now follows from inequality~\eqref{eq:RectBound}, the union bound and the fact that the events $P_3(n)$ form a partition of the union of the events $P_2(n)$.
\end{proof}

In the case that the outer region can be maintained indefinitely, in the sense that the trajetory is $b$-good, the evolution of the inner region can be seen as a two-dimensional generalized bootstrap percolation process.
Initially, all cells $(i, 0)$ with $i \leq N$ are active.
Lemma~\ref{lem:Rectangle} implies the following for all large enough $C$.
If $i + C(\alpha + r \beta) \leq t(R_{\omega,a} + 2 \theta_a)$, so that all cells between $(i, t)$ and $(i+C(\alpha + r \beta), t)$ are within the outer region, then with a combined probability of at least $1 - \lambda^{C^{Q_b/3}}$, the cell $(i + j, t + s)$ is active for each $(j, s) \in [C D_b, C D_b + (R_{\omega,a} + \theta_a) C \beta] \times [C, C + C \beta]$.
These probabilities are independent for those choices of $(i,t)$ that are far enough from each other, depending on the respective choices of $C$, and all of them are positively correlated.
In this way, we obtain an initial distribution $A_0 \subset \Z \times \N$ of active cells in a two-dimensional random configuration in the form of a half-infinite line at time $0$ and a random collection of rectangles.
The set $A_{k+1}$ contains all cells of $A_k$, and each cell $(i, t)$ such that $(i+j, t-1) \in A_k$ for each $-r \leq j \leq r$.
As a limit of this percolation process, we obtain a set $A_\infty = \bigcup_{k = 0}^\infty A_k$ of active cells, with the property that $R(x)^t_i = \omega$ for all $(i, t) \in A_\infty$.
Thus, the conditional probability of $R(x)$ being $\omega$-good given that it is $b$-good is at least the probability of $(i, t) \in A_\infty$ for all $t \geq 0$ and $i \leq t (R_{\omega, a} + \theta_a)$.


The following lemma establishes a lower bound for the probability of maintaining the inner region indefinitely, expressed as a property of the set $A_\infty$.
The idea of the proof is to show that with high probability the rectangles in $A_0$, together with the cells activated by the initial half-line, cover the entire discrete line of coordinates $(i,t)$ with $i \approx t (R_{\omega, a} + \theta_a)$.

\begin{lemma}
\label{lem:AllGood}
There exists $\xi > 0$ with the following property.
Suppose that $R(x)$ is $b$-good.
If $N$ is large enough, the conditional probability of $R(x)$ being $\omega$-good is at least $1 - \xi^{Q_b/3}$. 
\end{lemma}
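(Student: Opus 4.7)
My plan is to carry out a multi-scale chaining argument based on the $\omega$-rectangles produced by Lemma~\ref{lem:Rectangle}. I will choose a sequence of geometrically-spaced times $\tau_k = \tau_0 \mu^k$ for $k \geq 0$, with $\tau_0$ proportional to $N$ and $\mu > 1$ close to $1$. At each $\tau_k$ I will apply Lemma~\ref{lem:Rectangle} with scale $C_k = \tau_k$, using a suitably shifted version of $R(x)^{\tau_k}$ as the input configuration $y$. The required hypothesis that $y_i \geq b$ for $i \leq C_k(\alpha + r\beta)$ is supplied by the $b$-goodness of $R(x)$, provided I pick $\alpha$ and $\beta$ in Observation~\ref{obs:AlphaBeta} small enough that $\alpha + r\beta < R_{\omega,a} + 2\theta_a$. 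This yields an $\omega$-rectangle $\Pi_k$ of horizontal extent $(R_{\omega,a} + \theta_a)\tau_k\beta$ during times $[\tau_k(1+\beta),\, \tau_k(1+2\beta)]$, with failure probability at most $\lambda^{\tau_k^{Q_b/3}}$.

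The key geometric step will be to position and chain these rectangles so that their combined right edge tracks the inner cone boundary $t(R_{\omega,a} + \theta_a)$. By choosing the horizontal shift of each application of Lemma~\ref{lem:Rectangle}, I will arrange $\Pi_k$ to sit entirely within the outer region (which $b$-goodness guarantees extends to $t(R_{\omega,a} + 2\theta_a)$, leaving an order-$t\theta_a$ horizontal buffer) with its right edge at or slightly past the cone boundary. Picking $\mu < 1 + \beta$ makes the successive time intervals $[\tau_k(1+\beta), \tau_k(1+2\beta)]$ overlap, and taking $\mu$ close enough to $1$ also makes the horizontal extent of $\Pi_k$ overlap with that of $\Pi_{k+1}$. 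Once this chaining is set up, the union of all $\Pi_k$ together with the forward cone of the initial $\omega$ half-line $\{(i,t) : i \leq N - rt\}$ is closed under the percolation rule in a way that fills the entire inner cone $\{(i,t) : i \leq t(R_{\omega,a} + \theta_a)\}$, so $R(x)$ is $\omega$-good.

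For the probability estimate the union bound will give
\[
  \prob\bigl[\exists k : \Pi_k \text{ fails}\bigr] \;\leq\; \sum_{k=0}^{\infty} \lambda^{(\tau_0 \mu^k)^{Q_b/3}},
\]
which is dominated by its first term $\lambda^{\tau_0^{Q_b/3}}$. Taking $\tau_0 = cN$ for an appropriate constant $c > 0$ and letting $\xi$ absorb $\lambda^{c^{Q_b/3}}$ yields the required bound. The independence (or at least positive correlation) needed for the union bound follows from the fact that each $\Pi_k$ is determined by a disjoint slab of the underlying error set $E$.

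The main obstacle I anticipate is the geometric verification in the middle paragraph: selecting the offset of each rectangle so that its right edge sits exactly at the inner cone boundary while its left edge and the support-hypothesis region of Lemma~\ref{lem:Rectangle} both stay inside the outer region, and simultaneously arranging that consecutive rectangles overlap cleanly in space and in time. Once the geometry is pinned down the probabilistic side is essentially routine.
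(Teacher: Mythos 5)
Your high-level plan matches the paper's: chain geometrically-spaced $\omega$-rectangles from Lemma~\ref{lem:Rectangle} along the inner cone boundary, justify each application via $b$-goodness, and sum failure probabilities with a union bound. The structure is right, and the probabilistic tail estimate at the end is essentially what the paper does.

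However, there is a genuine gap in the parameter choices, specifically the claim that you can ``pick $\alpha$ and $\beta$ in Observation~\ref{obs:AlphaBeta} small enough that $\alpha + r\beta < R_{\omega,a} + 2\theta_a$.'' This is not available: the paper explicitly notes that $\alpha$ can be chosen \emph{as large as one wants}, and $\beta$ as small as one wants, but $\alpha$ is bounded \emph{from below}. The constant $\alpha$ must be large enough that coordinates of $y$ to the right of $C\alpha$ have no influence on the $\omega$-rectangle, which forces $\alpha$ to exceed roughly $D_b + r(1+2\beta)$ (the rectangle sits near horizontal position $C D_b$ and evolves for time of order $C$). Since $r \geq 1$ while $\theta_a = \epsilon^{K+1}/3$ is minuscule, there is no way to satisfy $\alpha + r\beta < R_{\omega,a} + 2\theta_a$ in general, even with $D_b$ close to $R_{\omega,a}$. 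Consequently, with your scale choice $C_k = \tau_k$, the hypothesis region $C_k(\alpha + r\beta) = \tau_k(\alpha + r\beta)$ overshoots the $b$-good half-line of width $\tau_k(R_{\omega,a}+2\theta_a)$, and Lemma~\ref{lem:Rectangle} does not apply where you want it. There is a related timing error: applying the lemma with scale $C_k = \tau_k$ to the configuration at time $\tau_k$ produces a rectangle near time $\tau_k(2+\beta)$, not $\tau_k(1+\beta)$; chasing this through shows the input time would need to be negative for a rectangle at time $\approx \tau_k$.

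The correct fix, which is what the paper does, is to keep $\alpha$ large and instead scale the rectangle \emph{down} relative to $\tau_k$: the paper sets $C_k = \rho t_k$ with $\rho = \theta_a/(\alpha + r\beta - D_b + R_{\omega,a} + 2\theta_a)$, a small constant proportional to the buffer $\theta_a$ between the inner and outer cones and inversely proportional to $\alpha$. With $C_k$ this small, the hypothesis region $C_k(\alpha + r\beta)$ fits inside the $\theta_a \cdot t_k$-wide buffer, the input time $t_k - \lfloor C_k \rfloor$ is positive, and $C_k$ still grows geometrically with $k$ and linearly with $N$, which is all the union bound needs. Incorporating this small scale constant, and repositioning the input shift $j_k = i_k - \lceil C_k D_b\rceil$ and input time $s_k = t_k - \lfloor C_k \rfloor$ accordingly, would repair your argument and bring it in line with the paper's.
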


\begin{proof}
  Let $\alpha, \beta, \lambda$ be given by Observation~\ref{obs:AlphaBeta}.
  Denote
  \[
    \rho = \frac{\theta_a}{\alpha + r \beta - D_b + R_{\omega,a} + 2 \theta_a}
  \]
  We may choose $\alpha$ so large that $0 < \rho < 1$.
  We define a sequence of time steps by $t_k = \lceil \frac{N}{4 r} (1 + \rho \beta/2)^{k/2} \rceil$ for all $k \geq 0$.
  Consider one of these time steps $t_k$, and set $i_k = \lfloor t_k (R_{\omega, a} + \theta_a) \rfloor$.
  Denote also $C_k = \rho t_k$, $j_k = i_k - \lceil C_k D_b \rceil$ and $s_k = t_k - \lfloor C_k \rfloor$.
  If $\alpha$ and $N$ are large enough, we have $j_k, s_k \geq 1$ for all $k \geq 0$.
  Denote by $P(k)$ the event that $(i_k + i, t_k + t) \in A_0$ for all $0 \leq i \leq (R_{\omega,a} + \theta_a) C_k \beta$ and $0 \leq t \leq C_k \beta$.
  This is the rectangle we are seeking to produce.
  See Figure~\ref{fig:Ladders} for a visualization.
  
  \begin{figure}[htp]
  \begin{center}
    \begin{tikzpicture}[scale=1.3]
      
      \clip (-1,-0.1) rectangle (6,9);
      
      \foreach \y [count=\cnt from 0] in {1,1.5,2.5,4,6,8.5,10.5,13}{
        \coordinate (p\cnt) at (1/5*\y,\y);
      }
    
      \fill [black!15] (-1,0) -- (0,0) -- (6,10) -- (-1,10);
      
      \foreach \p [count=\pp from 2] in {0,...,5}{
        \fill [black!25] (p\p) rectangle (p\pp);
      }

      \draw (-1,0) -- (3,0);
      \draw (0,0) -- (2,10);
      \draw (0,0) -- (6,10);
      
      \draw [dashed] (p0) -- (2,0);
      
      \foreach \p in {0,...,5}{
        \fill (p\p) circle (0.05cm);
      }
      
      \foreach \p [count=\pp from 2] in {0,...,5}{
        \draw [dotted] (p\p) rectangle (p\pp);
      }
      
      \node [left=0.4cm] at (p2) {$(i_k, t_k)$};
      \node [above right] at (2,0) {$(N, 0)$};
      \node [above left] at (0,0) {$(0, 0)$};
      
    \end{tikzpicture}
  \end{center}
  \caption{Geometric interpretation of the proof of Lemma~\ref{lem:AllGood}, not drawn to scale. Time increases upward. The light gray area is the outer region, and the dark gray rectangles denote the regions of $P_k$.}
  \label{fig:Ladders}
\end{figure}
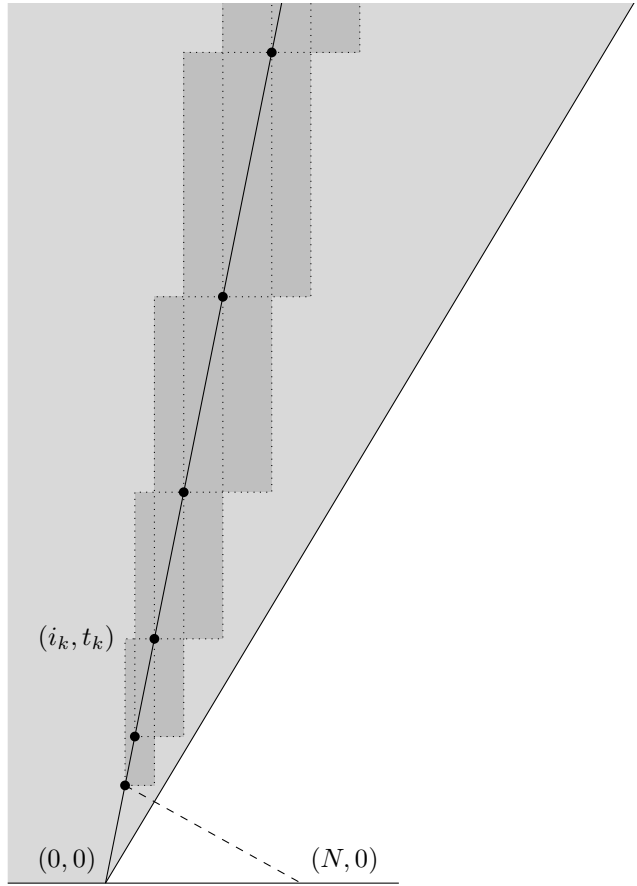
  
  We claim that if $P(k)$ holds for all $k \geq 0$, then $(i,t) \in A_\infty$ for all $t \in \N$ and $i \leq t (R_{\omega, a} + \theta_a)$.
  First, we have $(i,0) \in A_\infty$ for all $i \leq N$, and hence $(i,t) \in A_\infty$ for all $t \leq t_0 = \lceil \frac{N}{4 r} \rceil$ and $i \leq N - r t$ by the definition of the sets $A_k$.
  Since $R_{\omega,a} + \theta_a \leq 2 r$, this holds for all $t \leq t_0$ and $i \leq t (R_{\omega,a} + \theta_a)$ as long as $N$ is large enough.
  Hence the claim holds up to $t_0$.
  
  Suppose then that the claim holds up to some $t_k$.
  Since $P(k)$ holds, we have $(i,t) \in A_0$ for all $t_k \leq t \leq (1 + \rho \beta) t_k$ and $i_k \leq i \leq i_k + \rho \beta t_k (R_{\omega,a} + \theta_a)$.
  If $N$ (and hence $t_k$) is large enough, we have $(1 + \rho \beta) t_k > t_{k+1}$ and
  \[
    i_k + \rho \beta (R_{\omega,a} + \theta_a) t_k \geq t_k (\rho \beta + 1) (R_{a, \omega} + \theta_a) - 1 > i_{k+1} + r
  \]
  so that $(i,t) \in A_0$ for all $t_k \leq t \leq t_{k+1}$ and $i_k \leq i \leq i_{k+1} + r$.
  By the induction hypothesis, we also have $(i, t_k) \in A_\infty$ for all $i \leq t_k (R_{\omega, a} + \theta_a)$.
  For each $t_k \leq t < t_{k+1}$, the condition $(i, t) \in A_\infty$ for all $i \leq i_{k+1} + r$ implies $(i, t+1) \in A_\infty$ for all $i \leq i_{k+1}$ by the definition of the $A_k$, so by an inductive argument we obtain $(-\infty, i_{k+1}] \times [t_k, t_{k+1}] \subset A_\infty$.
  Since $i_{k+1} \geq t_{k+1} (R_{\omega,a} + \theta_a) \geq t (R_{\omega,a} + \theta_a)$ in this time interval, we have shown that the claim holds up to $t_{k+1}$.
  
  We now estimate the probability of the $P(k)$, given that $R(x)$ is $b$-good.
  We compute
  \begin{align*}
    j_k + C_k(\alpha + r \beta)
    \leq {}
    & t_k (R_{\omega,a} + \theta_a) + C_k (\alpha + r \beta - D_b) \\
    {} \stackrel{(*)}{=} {}
    & t_k (R_{\omega,a} + 2 \theta_a) - C_k (R_{\omega,a} + 2 \theta_a) \\
    {} \leq {}
    & s_k (R_{\omega,a} + 2 \theta_a)
  \end{align*}
  where the equality $(*)$ follows from $C_k = \rho t_k$.
  By the definition of $A_0$, this implies $(i_k + i, t_k + t) \in A_0$ for each $0 \leq i \leq (R_{\omega,a} + \theta_a) C_k \beta$ and $0 \leq t \leq C_k \beta$ -- which is exactly the event $P(k)$ -- with probability at least $1 - \lambda^{C_k^{Q_b/3}}$.
  From the union bound we obtain $\prob [ \forall k \geq 0 : P(k) \;|\; G_b] \geq 1 - \sum_{k = 0}^\infty \lambda^{C_k^{Q_b/3}}$, where $G_b$ is the event that $R(x)$ is $b$-good.
  
  We now compute $C_k^{Q_b/3} \geq (\frac{\rho N}{4 r} (1 + \rho \beta)^{k/2})^{Q_b/3} = \gamma N^{Q_b/3} \phi^k$, where $\gamma = (\frac{\rho}{4 r})^{Q_b/3}$ and $\phi = (1 + \rho \beta)^{Q_b/6}$ are constants independent of $N$ and $k$.
  For large enough $k$, we have $\phi^k > k$, so that
  \begin{align*}
  \sum_{k = 0}^\infty \lambda^{C_k^{Q_b/3}}
  \leq {} &
  \sum_{k = 0}^{k_0-1} \lambda^{\gamma N^{Q_b/3} \phi^k} + \sum_{k=k_0}^\infty (\lambda^{\gamma N^{Q_b/3}})^k \\
  {} = {} &
  \sum_{k = 0}^{k_0-1} \lambda^{\gamma N^{Q_b/3} \phi^k} + \frac{\lambda^{\gamma k_0 N^{Q_b/3}}}{1 - \lambda^{\gamma N^{Q_b/3}}}
  \end{align*}
  Since this is a finite sum with a constant number of terms, each of which is $\exp(-\Theta(N^{Q_b/3}))$, the claim follows.
\end{proof}

  

  

Together with the union bound, Lemma~\ref{lem:OuterRegion} and Lemma~\ref{lem:AllGood} imply that the probability of $R(x)$ being $\omega$-good and $b$-good is at least $1 - d^{N - K} - \xi^{N^{Q_b/3}}$ for large enough $N$.
We now choose $\xi < \delta_a < 1$ arbitrarily, and set $Q_a = Q_b/3$.
With these choices~\eqref{eq:InductiveStrong} holds, which finishes the proof of Proposition~\ref{prop:Inductive}.

\begin{proof}[Proof of second half of Theorem~\ref{thm:StableEroder}]
  Let $f : S^\Z \to S^\Z$ be a monotonic cellular automaton that does not satisfy the stability condition.
  Let $\epsilon > 0$ and let $R$ be the independent maximizing $\epsilon$-perturbation of $f$.
  By Lemma~\ref{lem:AltStability}, there exists a quiescent state $\omega \in S \setminus \{0\}$ such that $L_{a,\omega} \leq R_{\omega,a}$ for all $a < \omega$.
  By Proposition~\ref{prop:Inductive}, every quiescent state $a < \omega$ is inductive, so in particular $0$ is an inductive state.
  Thus there exist $0 < \delta_0 < 1$ and $Q_0 > 0$ such that, denoting $x = {}^\infty 0 \omega^N . \omega^N 0^\infty$, we have
  \[
    \prob [ \forall t \in \N : R(x)^t_{\lfloor t (L_{a,\omega} + R_{\omega,a}) / 2 \rfloor} = \omega ] \geq 1 - \delta_0^{N^{Q_0}}
  \]
  for all large enough $N$.
  Lemma~\ref{lem:ImmortalIsland} implies that $f$ is not a stable eroder.
\end{proof}

\section{Further Results}

We have presented a characterization of those one-dimensional monotonic cellular automata that erode finite islands in the presence of sufficiently low random noise.
The characterization was given in terms of forcing sets (Definition~\ref{def:StabCond} and Theorem~\ref{thm:StableEroderGeneral}), and alternatively in terms of Gal'perin rates (Theorem~\ref{thm:StableEroder}).
Since the Gal'perin rates of a one-dimensional monotonic CA can be computed from the local rule \cite{Ga77,dSaLeTo14}, we further obtain the following.

\begin{corollary}
  Given the local rule of a one-dimensional monotonic cellular automaton, it is decidable whether the automaton is a stable eroder.
\end{corollary}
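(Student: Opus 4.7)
The plan is to reduce the decision problem to a finite search whose ingredients are all effectively computable from the local rule, then invoke Theorem~\ref{thm:StableEroder} (the reformulation of Theorem~\ref{thm:StableEroderGeneral} in terms of Gal'perin rates) as the characterization we actually test.

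First I would observe that from the local rule $F : S^{[-r,r]} \to S$ one can enumerate the set $Q \subset S$ of quiescent states, since $a \in Q$ iff $F(a,\ldots,a) = a$, which is trivially decidable. By hypothesis $0, m \in Q$. Next, for each ordered pair $(a,b)$ of quiescent states with $a \ne b$, I would invoke the algorithm of~\cite{Ga77} (see also~\cite{dSaLeTo14}) to compute the Gal'perin rates $L_{a,b}$ and $R_{a,b}$ exactly as rational numbers. Since $Q$ is finite, this yields, in finitely many steps, a complete rational-valued table of the quantities $L_{a,b}$ and $R_{b,a}$ for all ordered pairs $a < b$ in $Q$.

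Now I would build a finite directed graph $G$ on vertex set $Q$ with an edge $a \to b$ precisely when $a < b$ and $L_{a,b} > R_{b,a}$. By Theorem~\ref{thm:StableEroder}, the automaton $f$ is a stable eroder if and only if there exist quiescent states $0 = a_1 < a_2 < \cdots < a_k = m$ with $L_{a_i,a_{i+1}} > R_{a_{i+1},a_i}$ for every $1 \leq i < k$; equivalently, if and only if the vertex $m$ is reachable from the vertex $0$ in $G$. Reachability in a finite directed graph is decidable (for instance by breadth-first search), so this completes the decision procedure.

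The only nontrivial input to this argument is the computability of the Gal'perin rates, which is imported as a black box from~\cite{Ga77,dSaLeTo14}; the remainder is a finite enumeration and a graph-reachability test. There is no substantive obstacle beyond assembling these ingredients, so the proof is essentially a one-paragraph appeal to Theorem~\ref{thm:StableEroder} together with the cited computability result.
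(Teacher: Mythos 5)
Your proposal is correct and follows the same route the paper intends: decide quiescence directly from the local rule, compute all Gal'perin rates via the cited algorithms, and then check the chain condition of Theorem~\ref{thm:StableEroder}. The paper leaves the last step implicit (it is a finite search over increasing sequences of quiescent states); your reformulation of that search as reachability from $0$ to $m$ in a finite DAG is a clean way to make the decidability explicit, but it is not a different argument.
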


In the context of probabilistic cellular automata, another interesting property is ergodicity.

\begin{definition}
  Let $S$ be a state set and $d \geq 1$, and let $R$ be a stochastic symbolic process on $S^{\Z^d}$.
  We say $R$ is \emph{ergodic}, if the marginals $R(\mu)^t$ converge weakly to the same measure on $S^{\Z^d}$ for every choice of $\mu \in \meas{S^{\Z^d}}$.
\end{definition}

Consider a one-dimensional monotonic CA $f$ on $S = \{0, \ldots, m\}$, and suppose that the states $0$ and $m$ are quiescent.
Let $R$ be an $\epsilon$-perturbation of $f$, and suppose that $R$ only introduces increasing errors (that is, $R(x)^{t+1} \geq f(R(x)^t)$ holds for all $t \in \N$).
Then we have $\lim_{t \to \infty} R(\uniform{m}) = \uniform{m}$.
If $f$ is also a stable eroder and $\epsilon$ is small enough, then $\lim_{t \to \infty} R(\uniform{0}) \neq \uniform{m}$, so $R$ is not ergodic.

In the converse direction, suppose that $f$ is not a stable eroder, so Lemma~\ref{lem:AltStability} gives us a quiescent state $\omega > 0$ with $L_{a,\omega} \leq R_{\omega,a}$ for all $a < \omega$.
We again choose $\omega$ to be minimal.
In this case, if $R$ is the independent $\omega, \epsilon$-perturbation of $f$, the results of Section~\ref{sec:Converse} imply that $\lim_{t \to \infty} \prob [\forall i \in C : R(\uniform{0})^t_i \geq \omega] = 1$ for any finite set $C \subset \Z$.
Since $f$ is monotonic, we can replace $\uniform{0}$ by an arbitrary measure and the result still holds.
If $\omega = m$, this implies that $R$ is ergodic.
However, if $\omega < m$, then $R$ is not ergodic, since $R(\uniform{0})^t$ converges to the point measure on $\uniform{\omega}$.
Furthermore, it can be the case that even the independent maximizing $m, \epsilon$-perturbation of $f$ is not ergodic for any $\epsilon$.
We show this by an example.

\begin{example}
  Recall the CA $f : S^\Z \to S^\Z$ from Example~\ref{ex:Ternary}.
  Let $T = \{0,1,2,3\}$, and for $a \in T$, denote $\bar{a} = \min(a, 2)$.
  Let $g : T^\Z \to T^\Z$ be the cellular automaton defined by the local rule
  \[
    G(a,b,c) =
    \begin{cases}
      F(\bar{a},\bar{b},\bar{c}), & \text{if~} b \leq 2, \\
      3, & \text{if~} a = b = c = 3, \\
      \bar{b}, & \text{otherwise.}
    \end{cases}
  \]
  The CA $g$ behaves exactly like $f$ on $S^\Z$.
  There is also a new state, $3$, that always erodes away at linear speed.
  It is easy to see that $g$ is a monotonic eroder, and it is not a stable eroder, since $f$ is not.
  Consider then the restriction of $g$ on $\{2,3\}^\Z$.
  This binary CA satisfies $L_{2,3} = 1 > -1 = R_{3,2}$, so it is a stable eroder.
  In particular, the independent $3, \epsilon$-perturbation $R$ of $g$ is not ergodic for any $\epsilon > 0$, since $\lim_{t \to \infty} R(\uniform{2})^t \neq R(\uniform{3})$.
  In this example, we have $\omega = 2$.
\end{example}

Finally, consider the two-dimensional cellular automaton $f$ with state set $\{0,1\}$ and neighborhood $N = \{(0,0), (1,0), (0,1)\}$, where the local rule always chooses the majority state in the three cells of $N$.
This automaton was first studied by Toom.
It is monotonic, and we can apply Proposition~\ref{prop:BinaryEroder} to show that it is a stable eroder.
In fact, since $f$ is symmetric with respect to switching the states $0$ and $1$, it is also a stable eroder toward $1$, meaning that if a small perturbation of $f$ is initialized on the all-$1$ configuration, the probability of any single cell to contain $0$ is low.
With our results, we can show that in the one-dimensional case, such an automaton does not exist.

\begin{proposition}
  Let $S = \{1, \ldots, m\}$, and let $f : S^\Z \to S^\Z$ be a monotonic CA.
  Let $g : S^\Z \to S^\Z$ be the CA defined by $g(x)_i = m + 1 - x_i$.
  If $f$ is a stable eroder, then $h = g \circ f \circ g$ is not an eroder.
\end{proposition}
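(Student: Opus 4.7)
The plan is to reduce ``$h$ is an eroder'' to a condition on the Gal'perin rates of $f$ via the conjugation $h = g \circ f \circ g$, and then to read off a direct contradiction with the stability condition supplied by Theorem~\ref{thm:StableEroder}.

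First, since $g$ is an order-reversing involution with $g^2 = \mathrm{id}$, an easy induction gives $h^t = g \circ f^t \circ g$ for every $t \in \N$. In particular $h$ is monotonic, and a state $a$ is quiescent for $h$ if and only if $m+1-a$ is quiescent for $f$. I write $L^f_{a,b}, R^f_{a,b}$ for the Gal'perin rates of $f$ and $L^h_{a,b}, R^h_{a,b}$ for those of $h$.

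Next, I would compute the Gal'perin rates of $h$ from those of $f$. Fix distinct quiescent states $a, a'$ of $h$, and set $b = m+1-a'$, $b' = m+1-a$, which are distinct quiescent states of $f$. The map $g$ sends the step $x$ of type $(a, a')$ for $h$ to the step $g(x)$ of type $(b', b)$ for $f$ (spatial coordinates are untouched), and for every $t \in \N$ and $i \in \Z$ we have $h^t(x)_i = a$ if and only if $f^t(g(x))_i = b'$, and $h^t(x)_i = a'$ if and only if $f^t(g(x))_i = b$. Taking the appropriate $\max$ and $\min$ over $i$ yields
\[
L^h_{a, a'} = L^f_{b', b}, \qquad R^h_{a, a'} = R^f_{b', b}.
\]

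Now combine this with Theorems~\ref{thm:Galperin} and~\ref{thm:StableEroder}. Specialising the identity above to the pairs $(1,a)$ and $(a,1)$ (with $a > 1$ quiescent for $h$ and $c := m+1-a < m$ quiescent for $f$) gives $R^h_{1,a} = R^f_{m,c}$ and $L^h_{a,1} = L^f_{c,m}$. Hence Theorem~\ref{thm:Galperin}, relabelled so that the minimum state is $1$ rather than $0$, asserts that $h$ is an eroder if and only if $R^f_{m,c} > L^f_{c,m}$ holds for every quiescent $c < m$ of $f$. But since $f$ is a stable eroder, Theorem~\ref{thm:StableEroder} provides a chain $1 = a_1 < a_2 < \cdots < a_k = m$ of quiescent states of $f$ with $L^f_{a_i, a_{i+1}} > R^f_{a_{i+1}, a_i}$ for each $i < k$. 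Taking $i = k-1$ and $c = a_{k-1} < m$ (so $a = m+1-c \neq 1$ is quiescent for $h$) gives $L^f_{c,m} > R^f_{m,c}$, which is precisely the negation of the eroder criterion for $h$ at this $a$. Thus $h$ fails Theorem~\ref{thm:Galperin} and is not an eroder.

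There is no genuine obstacle here beyond careful bookkeeping of the rate notation under conjugation by $g$; the argument is essentially a symmetry remark. The one-dimensional eroder condition for $h$ at a quiescent state $a$ translates, via $c = m+1-a$, into the exact opposite of the strict inequality appearing at the top of the stability chain for $f$, and the two inequalities cannot hold simultaneously.
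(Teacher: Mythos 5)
Your proof is correct and follows essentially the same route as the paper's: translate the Gal'perin rates of $h$ into those of $f$ via the conjugation by $g$, then observe that Theorem~\ref{thm:Galperin} applied to $h$ demands $R^f_{m,c} > L^f_{c,m}$ for every quiescent $c < m$, which contradicts the top link of the stability chain supplied by Theorem~\ref{thm:StableEroder}. You spell out the conjugation bookkeeping more explicitly than the paper, but the argument is identical.
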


\begin{proof}
  Since $f$ is an eroder, Theorem~\ref{thm:StableEroder} implies the existence of a quiescent state $a = a_{k-1} < m$ with $L_{a,m} > R_{m,a}$.
  If $h$ was an eroder, we would have $R_{m,b} > L_{b,m}$ for all $b < m$, which is impossible.
\end{proof}

On the other hand, there does exist a monotonic CA $f$ that is an eroder in both directions.

\begin{example}
  Let $S = \{0, 1, 2\}$, and let $f : S^\Z \to S^\Z$ be the radius-$2$ CA defined by the local rule
  \[
    F(a,b,c,d,e) =
    \begin{cases}
      0, & \text{if~} \max(a, b, c, d) \leq 1, e = 0, \\
      1, & \text{if~} c = 0, \min(d, e) \geq 1, \\
      1, & \text{if~} \max(d, e) \leq 1, c = 2, \\
      2, & \text{if~} a = 2, \min(b, c, d, e) \geq 1, \\
      c, & \text{otherwise.}
    \end{cases}
  \]
  A case analysis shows that $f$ is monotonic.
  By analyzing the behavior of $f$ on different steps, we can compute $R_{0,1} = -1 > -2 = L_{1,0}$ and $R_{0,2} = 1 > 0 = L_{2,0}$.
  By Theorem~\ref{thm:Galperin}, $f$ is an eroder.
  If we define $g(x)_i = 3 - x_{-i}$, then $g \circ f \circ g = f$, which means that the symbol-inverted version of $f$ behaves like the left-right-inverted version of $f$.
  In particular, the former is also an eroder.
  Of course, $f$ is not a stable eroder.
\end{example}

\bibliographystyle{plain}
\bibliography{bib}

\end{document}